\theoremstyle{plain}
\newtheorem{theorem}{Theorem}
\newtheorem{lemma}[theorem]{Lemma}
\newtheorem{corollary}[theorem]{Corollary}
\theoremstyle{remark}
\newtheorem*{remark}{Remark} 
\def\P{{\rm P}} 
\def\E{{\rm E}} 
\def\T{\textsf{T}} 
\def\emp{\varnothing} 
\def\@biblabel#1{\hspace*{-\labelsep}}
\title{Parrondo games with spatial dependence \break and a related spin system}
\author{S. N. Ethier\thanks{}
}
\author{S. N. Ethier\thanks{Partially supported by a grant from the Simons Foundation (209632).  Also supported by a Korean Federation of Science and Technology Societies grant funded by the Korean Government (MEST, Basic Research Promotion Fund).}\\
\begin{small}University of Utah\end{small}\\
\begin{small}Department of Mathematics\end{small}\\
\begin{small}155 South 1400 East, JWB 233\end{small}\\
\begin{small}Salt Lake City, UT 84112, USA\end{small}\\
\begin{small}e-mail: \url{ethier@math.utah.edu}\end{small}
\and
Jiyeon Lee\thanks{Supported by the Basic Science Research Program through the National Research Foundation of Korea (NRF) funded by the Ministry of Education, Science and Technology (2011-0005982).}\\
\begin{small}Yeungnam University\end{small}\\
\begin{small}Department of Statistics\end{small}\\
\begin{small}214-1 Daedong, Kyeongsan\end{small}\\
\begin{small}Kyeongbuk 712-749, South Korea\end{small}\\
\begin{small}e-mail: \url{leejy@yu.ac.kr}\end{small}}
\date{}
\begin{document}
\maketitle

\begin{abstract}
Toral introduced so-called cooperative Parrondo games, in which there are $N\ge3$ players arranged in a circle.  At each turn one player is randomly chosen to play.  He plays either game $A$ or game $B$, depending on the strategy.  Game $A$ results in a win or loss of one unit based on the toss of a fair coin.  Game $B$ results in a win or loss of one unit based on the toss of a biased coin, with the amount of the bias depending on whether none, one, or two of the player's two nearest neighbors have won their most recent games.  Game $A$ is fair, so the games are said to exhibit the Parrondo effect if game $B$ is losing or fair and the random mixture $C:=(1/2)(A+B)$ is winning.  With $\mu^N_B$ (resp., $\mu^N_C$) denoting the mean profit per turn to the ensemble of $N$ players always playing game $B$ (resp., $C$), we give sufficient conditions for $\lim_{N\to\infty}\mu^N_B$ to exist and show that $\lim_{N\to\infty}\mu^N_C$ nearly always exists, with the limits expressible in terms of a parameterized spin system on the one-dimensional integer lattice.  For a particular choice of the parameters, we show that the Parrondo effect (i.e., $\mu_B^N\le0$ and $\mu_C^N>0$) is present in the $N$-player model if and only if $N$ is even. For the same choice of the parameters, we show that, with a suitable interpretation and for certain initial distributions, the Parrondo effect is present in the spin system if and only if $N$ is even, $N$ being the number of consecutive players whose collective profit is tracked.\smallskip\par
\noindent \textit{AMS 2000 subject classification}: Primary 60K35; secondary 60J20. \smallskip\par
\noindent \textit{Key words and phrases}: Parrondo's paradox, cooperative Parrondo games, discrete-time Markov chain, stationary distribution, strong law of large numbers, interacting particle system, spin system, ergodicity, duality. 
\end{abstract}

\section{Introduction}\label{intro}

In Toral's (2001) \textit{cooperative Parrondo games}, there are $N\ge3$ players labeled from 1 to $N$ and arranged in a circle in clockwise order.  At each turn, one player is chosen at random to play.  Call him player $i$.  He plays either game $A$ or game $B$, depending on the strategy.  In game $A$ he tosses a fair coin.  In game $B$ he tosses a $p_0$-coin (i.e., $p_0$ is the probability of heads) if his neighbors $i-1$ and $i+1$ are both losers, a $p_1$-coin if $i-1$ is a loser and $i+1$ is a winner, a $p_2$-coin if $i-1$ is a winner and $i+1$ is a loser, and a $p_3$-coin if $i-1$ and $i+1$ are both winners.  (Because of the circular arrangement, player 0 is player $N$ and player $N+1$ is player 1.) A player's status as winner or loser depends on the result of his most recent game.  Of course, the player of either game wins one unit with heads and loses one unit with tails.  Under these assumptions, the model has an integer parameter $N\ge3$ and four probability parameters $p_0,p_1,p_2,p_3\in[0,1]$.  (The model described in the abstract is the special case in which $p_1=p_2$.)  Game $A$ is fair, so the games are said to exhibit the \textit{Parrondo effect} if game $B$ is losing or fair and the random mixture $C:=(1/2)(A+B)$ (i.e., toss a fair coin to determine which game to play) is winning.  Toral used simulation to find a case in which the Parrondo effect appears, thereby providing a new example of \textit{Parrondo's paradox} (Harmer and Abbott 2002, Abbott 2010).  

Extending the work of Mihailovi\'c and Rajkovi\'c (2003) (see also Xie et al.\ 2011), Ethier and Lee (2012) showed how to compute $\mu^N_B$ (resp., $\mu^N_C$), the mean profit per turn to the ensemble of $N$ players always playing game $B$ (resp., $C$), for $3\le N\le 19$.  Their numerical results suggested that these means converge as $N\to\infty$, and that the Parrondo effect (i.e., $\mu^N_B\le0$ and $\mu^N_C>0$) is present in the limit for a set of parameter vectors having nonzero volume.  In the present paper we give sufficient conditions for $\lim_{N\to\infty}\mu^N_B$ to exist and show that $\lim_{N\to\infty}\mu^N_C$ nearly always exists.  Further, we show that the limiting values are expressible in terms of a parameterized interacting particle system, or spin system, on the one-dimensional integer lattice $\bf Z$.  For a particular choice of the parameters, namely $p_0=1$, $p_1=p_2\in(1/2,1)$, and $p_3=0$, we show that the Parrondo effect is present in the $N$-player model if and only if $N$ is even.  For the same choice of the parameters, we show that, with a suitable interpretation and for certain initial distributions, the Parrondo effect is present in the spin system if and only if $N$ is even, $N$ being the number of consecutive players whose collective profit is tracked. 

Section~\ref{Markov} describes the $N$-player model and the associated discrete-time Markov chain.  Section~\ref{SLLN} establishes a strong law of large numbers (SLLN) for the sequence of profits to the ensemble of $N$ players playing game $B$, giving several formulas for the mean parameter.  Section~\ref{p0=1,p3=0} treats the special case in which we can confirm the Parrondo effect for all even $N\ge4$.    Section~\ref{SLLN-cont-time} describes what one might mean by Parrondo's paradox in continuous time, and establishes an SLLN for the sequence of profits to the ensemble of $N$ players playing game $B$, assuming a continuous-time Markov chain model.  Section~\ref{spin} introduces the related spin system and derives its basic properties, giving sufficient conditions for ergodicity.  Section~\ref{SLLN-spin} establishes an SLLN for the sequence of profits to one of the infinitely many players playing game $B$, assuming the spin system model.  Section~\ref{p0=1,p3=0-spin} examines the special case of Section~\ref{p0=1,p3=0} in the context of the spin system.  Section~\ref{open} concludes with a few open problems that we hope will be of interest to specialists in interacting particle systems.

\section{The discrete-time Markov chain}\label{Markov}

Let us define the Markov chain, introduced by Mihailovi\'c and Rajkovi\'c (2003), that keeps track of the status (loser or winner, 0 or 1) of each of the $N$ players.  It depends on an integer parameter $N\ge3$ and four probability parameters $p_0,p_1,p_2,p_3\in[0,1]$.  Its state space is the product space 
$$
\Sigma:=\{\bm x=(x_1,x_2,\ldots,x_N): x_i\in\{0,1\}{\rm\ for\ }i=1,\ldots,N\}=\{0,1\}^N
$$
with $2^N$ states.  Let $m_i(\bm x):=2x_{i-1}+x_{i+1}$, or, in other words, $m_i(\bm x)$ is the integer (0, 1, 2, or 3) whose binary representation is $(x_{i-1}\,x_{i+1})_2$;  of course, $x_0:=x_N$ and $x_{N+1}:=x_1$.  Also, let $\bm x^i$ be the element of $\Sigma$ equal to $\bm x$ except at the $i$th component.  For example, $\bm x^1:=(1-x_1,x_2,x_3,\ldots,x_N)$.  The one-step transition matrix $\bm P$ for this Markov chain has the form
\begin{equation}\label{x to x^i}
P(\bm x,\bm x^i):=\begin{cases}N^{-1}p_{m_i(\bm x)}&\text{if $x_i=0$,}\\N^{-1}q_{m_i(\bm x)}&\text{if $x_i=1$,}\end{cases}\qquad i=1,\ldots,N,\;\bm x\in\Sigma,
\end{equation}
\begin{equation}\label{x to x}
P(\bm x,\bm x):=N^{-1}\bigg(\sum_{i:x_i=0}q_{m_i(\bm x)}+\sum_{i:x_i=1}p_{m_i(\bm x)}\bigg),\qquad \bm x\in\Sigma,
\end{equation}
where $q_m:=1-p_m$ for $m=0,1,2,3$ and empty sums are 0, and $P(\bm x,\bm y)=0$ otherwise. 

If $p_0,p_1,p_2,p_3\in(0,1)$, then the Markov chain is irreducible and aperiodic, but this assumption is unnecessarily restrictive.  Instead we will assume that $p_0,p_1,p_2,p_3\in[0,1]$ and that \textit{the Markov chain is ergodic} (i.e., there is a unique stationary distribution and the distribution at time $n$ converges to it as $n\to\infty$, regardless of the initial distribution).  What exactly does this involve?  The answer, which is not entirely intuitive, is provided by the following lemma.  We denote the state comprising all $0$s by $\bm0$ and the state comprising all $1$s by $\bm1$.

\begin{lemma}\label{ergodicity-P}
The Markov chain in $\Sigma$ with one-step transition matrix $\bm P$ given by (\ref{x to x^i}) and (\ref{x to x}), where $N\ge3$ and $p_0,p_1,p_2,p_3\in[0,1]$, has the following behavior.

\emph{(a)} If $p_0,p_3\in(0,1)$, then the chain is irreducible and aperiodic, with the following two exceptions.  If $N=3$ and $p_1=p_2=0$, then $\{000,001,010,100\}$ is closed, irreducible, and aperiodic, and the other four states are transient.  If $N=3$ and $p_1=p_2=1$, then $\{011,101,110,111\}$ is closed, irreducible, and aperiodic, and the other four states are transient.

\emph{(b)} Suppose $p_0=1$ and $p_3\in(0,1)$.  Then state $\bm0$ is transient, and $\Sigma-\{\bm0\}$ is closed, irreducible, and aperiodic, with the following two exceptions.  If $p_1=p_2=1$, then all states are transient except those without adjacent $0$s, and the set of such states is closed, irreducible, and aperiodic.  If $N$ is divisible by $3$ and $p_1=p_2=0$, then states $001\cdots001$, $010\cdots010$, and $100\cdots100$ are absorbing, and all other states are transient.

\emph{(c)} Suppose $p_0=0$ and $p_3\in[0,1)$.  Then state $\bm0$ is absorbing, and all other states are transient, with the following two exceptions.  If $N$ is divisible by $3$, $p_3=0$, and $p_1=p_2=1$, then there are four absorbing states, namely $\bm0$, $011\cdots011$, $101\cdots101$, and $110\cdots110$, and all other states are transient.  If $N=3$, $p_3>0$, and $p_1=p_2=1$, then $\bm0$ is absorbing, $\{011,101,110,111\}$ is closed, irreducible, and aperiodic, and the other three states are transient.

\emph{(d)} Suppose $p_0\in(0,1)$ and $p_3=0$.  Then state $\bm1$ is transient, and $\Sigma-\{\bm1\}$ is closed, irreducible, and aperiodic, with the following two exceptions.  If $p_1=p_2=0$, then all states are transient except those without adjacent $1$s, and the set of such states is closed, irreducible, and aperiodic.  If $N$ is divisible by $3$ and $p_1=p_2=1$, then states $011\cdots011$, $101\cdots101$, and $110\cdots110$ are absorbing, and all other states are transient.

\emph{(e)} Suppose $p_0\in(0,1]$ and $p_3=1$.  Then state $\bm1$ is absorbing, and all other states are transient, with the following two exceptions.  If $N$ is divisible by $3$, $p_0=1$, and $p_1=p_2=0$, then there are four absorbing states, namely $\bm1$, $001\cdots001$, $010\cdots010$, and $100\cdots100$, and all other states are transient.  If $N=3$, $p_0<1$, and $p_1=p_2=0$, then $\bm1$ is absorbing, $\{000,001,010,100\}$ is closed, irreducible, and aperiodic, and the other three states are transient.

\emph{(f)} Suppose $p_0=1$ and $p_3=0$.  If $N$ is even, then states $01\cdots01$ and $10\cdots10$ are absorbing, and all other states are transient, with two exceptions described below.  If $N$ is odd, then all states are transient except the $2N$ states in which $0$s and $1$s alternate with the single exception of a pair of adjacent $0$s or a pair of adjacent $1$s, and the set of such states is closed, irreducible, and aperiodic, with the same two exceptions, which are as follows.  If $p_1=p_2=0$, then every state with no adjacent $1$s and no more than two consecutive $0$s is absorbing, and all other states are transient.  If $p_1=p_2=1$, then every state with no adjacent $0$s and no more than two consecutive $1$s is absorbing, and all other states are transient.

\emph{(g)} If $p_0=0$ and $p_3=1$, then states $\bm0$ and $\bm1$ are absorbing, and all other states are transient.

In summary, the Markov chain is ergodic except when $(p_0,p_3)=(0,1)$; or when $N$ is even and $(p_0,p_3)=(1,0)$; or when $(p_0,p_1,p_2,p_3)$ equals $(1,0,0,0)$ or $(1,1,1,0)$;  or when $N$ is divisible by $3$ and either $(p_0,p_1,p_2)=(1,0,0)$ or $(p_1,p_2,p_3)=(1,1,0)$; or when $N=3$ and either $(p_0,p_1,p_2)=(0,1,1)$ or $(p_1,p_2,p_3)=(0,0,1)$.

Furthermore, excluding the exceptions just listed, all of which involve non-uniqueness of stationary distributions, there is a closed, irreducible, aperiodic set of states, and all other states are transient.  Thus, uniqueness of stationary distributions and ergodicity are equivalent here.
\end{lemma}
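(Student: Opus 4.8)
The plan is to treat the detailed classification in parts (a)--(g) as the engine and to derive the two closing assertions as its synthesis, so I first describe how I would establish (a)--(g) and then how the summary follows. The whole lemma is a reachability analysis of a single-spin-flip chain: from $\bm x$ the only moves are to the neighbors $\bm x^i$, the move $x_i=0\to1$ is available exactly when $p_{m_i(\bm x)}>0$, and the move $x_i=1\to0$ is available exactly when $q_{m_i(\bm x)}=1-p_{m_i(\bm x)}>0$. Hence the communication structure is governed entirely by which of $p_0,p_1,p_2,p_3$ and their complements are positive, and the extreme values $p_m\in\{0,1\}$ are precisely what create forbidden or forced flips. This already explains the sevenfold split by $(p_0,p_3)$: since every site of $\bm0$ has $m_i=0$ and every site of $\bm1$ has $m_i=3$, the value $p_0=0$ makes $\bm0$ absorbing, $p_3=1$ makes $\bm1$ absorbing, $p_0=1$ makes $\bm0$ a transient source, and $p_3=0$ makes $\bm1$ a transient source.

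For the generic interior case $p_0,p_3\in(0,1)$ I would prove full irreducibility using that $p_0,q_0,p_3,q_3$ are all strictly positive: an isolated $1$ (both neighbors $0$, type $0$) can always be flipped to $0$ since $q_0>0$, and an isolated $0$ (both neighbors $1$, type $3$) can always be flipped to $1$ since $p_3>0$, which lets one connect an arbitrary state to a fixed reference state and back even when $p_1$ or $p_2$ is extreme. Aperiodicity then comes for free from a self-loop $P(\bm x,\bm x)>0$, which holds whenever some site has flip probability strictly between the extremes. The genuine failures of this freedom occur only at $N=3$ with $p_1=p_2\in\{0,1\}$, where the small cycle leaves no room to isolate a defect; these, and all the boundary cases, are then handled by the same reachability bookkeeping under the added constraints, the only subtlety being to locate the closed classes exactly.

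The hard part will be the combinatorial identification of these closed classes when, on top of a boundary value of $p_0$ or $p_3$, one also has $p_1=p_2\in\{0,1\}$, together with the parity and divisibility dichotomies. Here I would argue structurally: when $p_0=1,\,p_3=0$, a site with two equal neighbors flips deterministically toward breaking up any block of three equal symbols, so every recurrent configuration avoids three consecutive $0$s and three consecutive $1$s; for even $N$ the only absorbing configurations are the two perfect alternations $01\cdots01$ and $10\cdots10$, while for odd $N$ the cycle is not bipartite, so no perfect alternation exists and every recurrent state carries exactly one adjacent equal pair, yielding the $2N$-state irreducible aperiodic class described. The period-$3$ patterns $001\cdots001,\,010\cdots010,\,100\cdots100$ (and their complements) arise precisely because $p_1=p_2=0$ (resp.\ $1$) freezes them, and they close up on the circle only when $3\mid N$; the $N=3$ anomalies must be checked by hand because there the ``no three consecutive'' and period-$3$ descriptions collide. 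Verifying aperiodicity of each genuinely recurrent (non-absorbing) class, and counting the closed classes correctly, is the principal source of casework.

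With (a)--(g) in hand, the summary is assembled as follows. Scanning the seven parts, a case fails to have a single closed class exactly when it produces two or more closed or absorbing sets; reading these off gives precisely $(p_0,p_3)=(0,1)$ (the two absorbing states $\bm0,\bm1$); $N$ even with $(p_0,p_3)=(1,0)$ (two absorbing alternations); $(p_0,p_1,p_2,p_3)=(1,0,0,0)$ and $(1,1,1,0)$ (many absorbing states from the ``no three consecutive'' description); $3\mid N$ with $(p_0,p_1,p_2)=(1,0,0)$ or $(p_1,p_2,p_3)=(1,1,0)$ (the three period-$3$ patterns, possibly together with $\bm0$ or $\bm1$); and $N=3$ with $(p_0,p_1,p_2)=(0,1,1)$ or $(p_1,p_2,p_3)=(0,0,1)$ (an absorbing state together with a separate four-state closed set). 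Each such case has at least two closed irreducible classes, hence at least two stationary distributions, so it is non-unique and non-ergodic. In every remaining case parts (a)--(g) exhibit a \emph{single} closed, irreducible, aperiodic class with all other states transient; for a finite chain this forces a unique stationary distribution and convergence to it from any initial distribution, i.e.\ ergodicity. Finally, since every closed irreducible class produced by this model is aperiodic, uniqueness of the stationary distribution (equivalently, a single closed class) already entails ergodicity, while ergodicity trivially entails uniqueness --- so the two notions coincide here, as claimed. The main obstacle throughout remains the combinatorial casework of the preceding paragraph rather than this concluding synthesis.
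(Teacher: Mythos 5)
Your global architecture (classify the closed classes in (a)--(g), then read off the summary by finite-chain theory: two or more closed classes forces non-uniqueness, a single closed irreducible aperiodic class with all else transient forces ergodicity) is sound and matches the paper's; your concluding synthesis paragraph would stand as is. The problem is in the casework, which is where the lemma actually lives. The paper proves only part (f), and does so by two explicit constructions: for even $N$, from an arbitrary state one flips entries left to right (or right to left) to reach an absorbing alternating state, checking each elementary move $000\to010$, $001\to011$, $111\to101$, $110\to100$ is legal under (i) $p_1>0,\,p_2<1$ or (ii) $p_1<1,\,p_2>0$; for odd $N$, one labels the $2N$ defect states $(0,i),(1,i)$ and exhibits a cycle with step probabilities $N^{-1}p_1,N^{-1}p_2,N^{-1}q_1,N^{-1}q_2$. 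Your replacement argument for (f) has a genuine logical hole: the inference ``a site with two equal neighbors flips deterministically, so every recurrent configuration avoids three consecutive equal symbols'' is invalid as stated, because the forced flip of the middle of a block, when chosen, does not by itself preclude return paths that rebuild the block --- and blocks do rebuild: for $N=4$, $0011\to0001$ (flip the $1$ at site $3$, probability $N^{-1}q_1>0$) recreates a cyclic block of three $0$s. What rescues the idea is a monotone invariant you never state: when $p_0=1$ and $p_3=0$, no legal flip decreases the number of adjacent unequal pairs (here one uses $q_0=0$ and $p_3=0$ to forbid flipping an isolated defect $010$ or $101$), while flipping the middle of a block of three increases it by $2$, so such a move can never be undone. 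Even granting that, transience of each non-maximal state still requires showing it can reach a strictly wall-increasing move (equivalently, reach the closed class), which is exactly the paper's left-to-right construction and appears nowhere in your proposal.

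The same pattern recurs in the rest of your sketch: for odd $N$ you assert the $2N$-state defect class is irreducible and aperiodic, but irreducibility is precisely where the exceptions enter --- the defect walks around the circle only if $p_1>0,\,q_2>0$ or $p_2>0,\,q_1>0$, which is what pinpoints $p_1=p_2\in\{0,1\}$ as the failures --- and aperiodicity needs the observation $P(\bm x,\bm x)\ge1-2/N>0$; neither is argued. Parts (a)--(e) and (g), the $3\mid N$ and $N=3$ anomalies, and the exceptional absorbing-set descriptions in (f) (``no adjacent $1$s and no more than two consecutive $0$s,'' which the paper handles by an explicit flipping procedure) are all acknowledged placeholders in your text (``checked by hand,'' ``the principal source of casework''). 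Since those verifications are the content of the lemma, what you have is a correct plan with one incorrect inference at its only substantive step, not a proof; to complete it along your lines you would need to state and verify the wall-count invariant, supply the reachability constructions, and actually carry out the deferred case analysis.
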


\begin{proof}
We prove part (f) and leave the remainder of the proof as an exercise for the interested reader.  Consider first the case of $N$ even.  Excluding the two exceptional cases, we notice that either $(i)$ $p_1>0$ and $p_2<1$ or $(ii)$ $p_1<1$ and $p_2>0$.  Start with an arbitrary $\bm x$.  In case $(i)$ we change the entries $x_2,\ldots,x_N$ from left to right as needed to get $01\cdots01$ if $x_1=0$ and $10\cdots10$ if $x_1=1$.  This requires $000\to010$, $001\to011$, $111\to101$, and $110\to100$, all of which are legal moves under $(i)$.  In case $(ii)$ we change the entries $x_1,\ldots,x_{N-1}$ from right to left as needed to get  $01\cdots01$ if $x_N=1$, $10\cdots10$ if $x_N=0$.  This requires $000\to010$, $100\to110$, $111\to101$, and $011\to001$, all of which are legal moves under $(ii)$.  

We turn next to the case of $N$ odd.  Excluding the exceptional cases, again consider cases $(i)$ and $(ii)$ as above.  Let us label the $2N$ states as follows.  A state with alternating 0s and 1s except for a pair of adjacent 0s is labeled by $(0,i)$ if the first 0 of the pair (in clockwise order) occurs at position $i$.  A state with alternating 0s and 1s except for a pair of adjacent 1s is labeled by $(1,i)$ if the first 1 of the pair (in clockwise order) occurs at position $i$.  Then the chain jumps from $(0,i)$ to $(1,i-1)$ and $(1,i+1)$ with probabilities $N^{-1}p_2$ and $N^{-1}p_1$, respectively, and from $(1,i)$ to $(0,i-1)$ and $(0,i+1)$ with probabilities $N^{-1}q_1$ and $N^{-1}q_2$, respectively; as usual, $1-1:=N$ and $N+1:=1$.  In case $(i)$, $p_1>0$ and $q_2>0$, so that the chain can cycle through the $2N$ states in clockwise order, whereas in case $(ii)$, $p_2>0$ and $q_1>0$, so that the chain can cycle through the $2N$ states in counter-clockwise order.  Irreducibility of the set of $2N$ states leads to aperiodicity because $P(\bm x,\bm x)\ge1-2/N>0$ for every such $\bm x$.  To see that all other states are transient, we use the same argument as in the preceding paragraph.  But here the target states are $01\cdots010$ and $10\cdots101$.

Finally, regardless of the parity of $N$, assume $p_1=p_2=0$.  (The case $p_1=p_2=1$ is symmetric.)  Clearly, every state with no adjacent 1s and no more than two consecutive 0s is absorbing.  Starting from an arbitrary $\bm x$, flip each 0 surrounded by 0s, in clockwise order, say.  Then 0s occur only as singletons or pairs.  Then do the same for 1s, so that 1s occur only as singletons or pairs.  Since $q_1=q_2=1$, we can flip one of the 1s in each adjacent pair.  This may create three consecutive 0s; if so, flip the center one.  The resulting state has no adjacent 1s and no more than two consecutive 0s.   
\end{proof}

We have described what is known as game $B$.  The description suggests that its long-term behavior should be invariant under rotation (and, if $p_1=p_2$, reflection) of the $N$ players.  

\begin{lemma}\label{invariance}
Let $G$ be a subgroup of the symmetric group $S_N$.  Let $\bm P$ be the one-step transition matrix for a Markov chain in $\Sigma$ having a unique stationary distribution $\bm\pi$.  For $\bm x=(x_1,\ldots,x_N)\in\Sigma$ and $\sigma\in G$, write $\bm x_\sigma:=(x_{\sigma(1)},\ldots,x_{\sigma(N)})$, and assume that
$P(\bm x_\sigma,\bm y_\sigma)=P(\bm x,\bm y)$ for all $\sigma\in G$ and $\bm x,\bm y\in\Sigma$. 
Then $\pi(\bm x_\sigma)=\pi(\bm x)$ for all $\sigma\in G$ and $\bm x\in\Sigma$.  
\end{lemma}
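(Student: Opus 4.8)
The plan is to exploit the uniqueness of $\bm\pi$: for each fixed $\sigma\in G$ I would manufacture a second stationary distribution and argue that, by uniqueness, it must coincide with $\bm\pi$. Concretely, define the vector $\bm\pi^\sigma$ on $\Sigma$ by $\pi^\sigma(\bm x):=\pi(\bm x_\sigma)$. If I can show that $\bm\pi^\sigma$ is again a stationary distribution for $\bm P$, then uniqueness forces $\bm\pi^\sigma=\bm\pi$, which is precisely the desired identity $\pi(\bm x_\sigma)=\pi(\bm x)$.

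The first step is the elementary observation that, for each permutation $\sigma$, the relabeling map $\Phi_\sigma\colon\bm x\mapsto\bm x_\sigma$ is a bijection of $\Sigma$ onto itself. A direct computation gives $(\bm x_\sigma)_\tau=\bm x_{\sigma\tau}$, so that $\Phi_\tau\circ\Phi_\sigma=\Phi_{\sigma\tau}$ and hence $\Phi_{\sigma^{-1}}$ is the inverse of $\Phi_\sigma$. Bijectivity immediately gives that $\bm\pi^\sigma$ is a probability vector: its entries are nonnegative, and the substitution $\bm u=\bm x_\sigma$ yields $\sum_{\bm x}\pi(\bm x_\sigma)=\sum_{\bm u}\pi(\bm u)=1$.

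The substantive step is verifying stationarity. Beginning with $\sum_{\bm x}\pi^\sigma(\bm x)P(\bm x,\bm y)=\sum_{\bm x}\pi(\bm x_\sigma)P(\bm x,\bm y)$, I would apply the invariance hypothesis $P(\bm x,\bm y)=P(\bm x_\sigma,\bm y_\sigma)$ to rewrite the summand as $\pi(\bm x_\sigma)P(\bm x_\sigma,\bm y_\sigma)$, and then substitute $\bm u=\bm x_\sigma$ to obtain $\sum_{\bm u}\pi(\bm u)P(\bm u,\bm y_\sigma)$. Stationarity of $\bm\pi$ collapses this last sum to $\pi(\bm y_\sigma)=\pi^\sigma(\bm y)$, establishing $\bm\pi^\sigma\bm P=\bm\pi^\sigma$. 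Carrying this out for every $\sigma\in G$ finishes the argument.

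I expect no serious obstacle: this is a clean uniqueness-plus-change-of-variables calculation. The only point needing minor care is to keep the composition convention straight, so that $\Phi_\sigma$ really is a bijection and the substitution inside the sum is legitimate. In fact the proof uses nothing about $G$ beyond each element being an invertible permutation, so the conclusion holds $\sigma$ by $\sigma$ and the subgroup structure is not essential.
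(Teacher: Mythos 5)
Your proposal is correct and follows essentially the same route as the paper: define $\pi_\sigma(\bm x):=\pi(\bm x_\sigma)$, verify via the invariance hypothesis, the change of variables $\bm u=\bm x_\sigma$, and stationarity of $\bm\pi$ that $\bm\pi_\sigma$ is again stationary, and then invoke uniqueness. The paper's one-line computation is the same chain of equalities you wrote, just read in the opposite order, so there is nothing to add.
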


\begin{proof}
Given $\sigma\in G$, define the distribution ${\bm\pi}_\sigma$ on $\Sigma$ by ${\pi}_\sigma(\bm x):=\pi(\bm x_\sigma)$.  Then
\begin{equation*}
\pi_\sigma(\bm y)
=\sum_{\bm x\in\Sigma}\pi(\bm x)P(\bm x,\bm y_\sigma)=\sum_{\bm x\in\Sigma}\pi(\bm x_\sigma)P(\bm x_\sigma,\bm y_\sigma)
=\sum_{\bm x\in\Sigma}\pi_\sigma(\bm x)P(\bm x,\bm y)
\end{equation*}
for all $\bm y\in\Sigma$, hence by the uniqueness of stationary distributions, $\bm\pi_\sigma=\bm\pi$. 
\end{proof}

The lemma applies to our Markov chain (assuming it is ergodic) if $G$ is the subgroup of \textit{cyclic permutations} (or rotations) of $(1,2,\ldots,N)$, that is, the group generated by
\begin{equation}\label{cyclic}
(\sigma(1),\sigma(2),\ldots,\sigma(N)):=(2,3,\ldots,N,1).
\end{equation}
If $p_1=p_2$, then it also applies if $G$ is the subgroup generated by (\ref{cyclic}) and 
the \textit{order-reversing permutation} (or reflection) of $(1,2,\ldots,N)$,
\begin{equation*}
(\sigma(1),\sigma(2),\ldots,\sigma(N)):=(N,N-1,\ldots,2,1).
\end{equation*}
In this case $G$ is known as the \textit{dihedral group} of order $2N$.  The calculation that justifies these conclusions is
\begin{eqnarray}\label{invariance_property}
P(\bm x_\sigma,(\bm x^i)_\sigma)&=&P(\bm x_\sigma,(\bm x_\sigma)^{\sigma^{-1}(i)})\\
&=&\begin{cases}N^{-1}p_{m_{\sigma^{-1}(i)}(\bm x_\sigma)}&\text{if $(\bm x_\sigma)_{\sigma^{-1}(i)}=0$}\nonumber\\
N^{-1}q_{m_{\sigma^{-1}(i)}(\bm x_\sigma)}&\text{if $(\bm x_\sigma)_{\sigma^{-1}(i)}=1$}\end{cases}\nonumber\\
&=&\begin{cases}N^{-1}p_{m_i(\bm x)}&\text{if $x_i=0$}\\N^{-1}q_{m_i(\bm x)}&\text{if $x_i=1$}\end{cases}\nonumber\\
&=&P(\bm x,\bm x^i)\nonumber
\end{eqnarray}
for $i=1,\ldots,N$ and all $\bm x\in\Sigma$.

We conclude this section with an application of Lemma~\ref{invariance} that will be useful later.

\begin{corollary}\label{symm}
Assume that the Markov chain in $\Sigma$ with one-step transition matrix $\bm P$ given by (\ref{x to x^i}) and (\ref{x to x}) is ergodic with unique stationary distribution $\bm\pi$, and denote by $\pi_{1,3}$ its $1,3$ two-dimensional marginal.  If also $p_1=p_2$, then $\pi_{1,3}(0,1)=\pi_{1,3}(1,0)$.
\end{corollary}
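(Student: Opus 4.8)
The plan is to exploit the reflection symmetry of the stationary distribution, which is available precisely because $p_1=p_2$. Since $p_1=p_2$, the computation in (\ref{invariance_property}) verifies the hypothesis of Lemma~\ref{invariance} with $G$ taken to be the dihedral group of order $2N$; hence the unique stationary distribution satisfies $\pi(\bm x_\sigma)=\pi(\bm x)$ for every $\sigma\in G$ and every $\bm x\in\Sigma$. The idea is to locate a single $\sigma\in G$ whose coordinate action interchanges positions $1$ and $3$, and then let invariance of $\bm\pi$ force the two marginal probabilities to coincide. To set up the bookkeeping I would first write the marginals explicitly as $\pi_{1,3}(0,1)=\sum_{\bm x\in\Sigma:\,x_1=0,\,x_3=1}\pi(\bm x)$ and $\pi_{1,3}(1,0)=\sum_{\bm x\in\Sigma:\,x_1=1,\,x_3=0}\pi(\bm x)$, so that the goal becomes an identification of two sums.

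The central step is to exhibit the correct symmetry. The natural candidate is the reflection of the circle of players about the axis through player $2$, namely the permutation $\sigma\in G$ determined by $\sigma(i)\equiv 4-i\pmod N$, with values taken in $\{1,\ldots,N\}$. This is one of the $N$ reflections that together with the $N$ rotations make up $G$, and it satisfies $\sigma(1)=3$, $\sigma(2)=2$, and $\sigma(3)=1$. From the definition $\bm x_\sigma=(x_{\sigma(1)},\ldots,x_{\sigma(N)})$ one reads off $(\bm x_\sigma)_1=x_{\sigma(1)}=x_3$ and $(\bm x_\sigma)_3=x_{\sigma(3)}=x_1$, which is exactly the coordinate swap we want.

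To finish, I would combine invariance with a reindexing. The map $\bm x\mapsto\bm x_\sigma$ is a bijection of $\Sigma$ (indeed an involution, since $\sigma^2=\mathrm{id}$), and by the two identities above it carries $\{\bm x:x_1=0,\,x_3=1\}$ bijectively onto $\{\bm y:y_1=1,\,y_3=0\}$. Applying $\pi(\bm x)=\pi(\bm x_\sigma)$ termwise and then substituting $\bm y=\bm x_\sigma$ in the sum gives
$$
\pi_{1,3}(0,1)=\sum_{\bm x:\,x_1=0,\,x_3=1}\pi(\bm x)=\sum_{\bm x:\,x_1=0,\,x_3=1}\pi(\bm x_\sigma)=\sum_{\bm y:\,y_1=1,\,y_3=0}\pi(\bm y)=\pi_{1,3}(1,0),
$$
which is the assertion.

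The step most deserving of care is the middle one: recognizing that the conclusion is really a statement about the reflection fixing player $2$ (equivalently, swapping players $1$ and $3$), and checking that this reflection genuinely lies in $G$ and acts on the two relevant coordinates exactly as required for all $N\ge3$. Once the right $\sigma$ is pinned down, the remainder is a routine reindexing. It is worth emphasizing where the hypothesis $p_1=p_2$ is used: without it, Lemma~\ref{invariance} would apply only to the rotation subgroup, which in general contains no element simultaneously sending $1\mapsto3$ and $3\mapsto1$, so the reflection invariance supplied by $p_1=p_2$ is exactly what the argument needs.
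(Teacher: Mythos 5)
Your proof is correct and takes essentially the same approach as the paper: both invoke Lemma~\ref{invariance} with $G$ the dihedral group and use the reflection fixing player $2$, which the paper writes as $\pi(x_1,x_2,x_3,x_4,\ldots,x_N)=\pi(x_3,x_2,x_1,x_N,\ldots,x_4)$ before summing over $x_2$ and $x_4,\ldots,x_N$. Your bijective reindexing of the sums defining $\pi_{1,3}(0,1)$ and $\pi_{1,3}(1,0)$ is just a rephrasing of that marginalization step.
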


\begin{proof}
By Lemma~\ref{invariance} with $G$ being the dihedral group, $\pi(x_1,x_2,x_3,x_4,\ldots,x_N)=\pi(x_3,x_2,x_1,x_N,\ldots,x_4)$ for all $\bm x\in\Sigma$.  Now sum over $x_2$ and $x_4,\ldots,x_N$ to obtain the desired result.
\end{proof}

\section{SLLN}\label{SLLN}

The strong law of large numbers of Ethier and Lee (2009) applies not to the Markov chain of Section~\ref{Markov} but to a slightly more informative Markov chain.  The new state space is
$\Sigma^*:=\Sigma\times\{1,2,\ldots,N\}$ and the process is in state $(\bm x,i)$ if $\bm x$ describes the status of each player and $i$ is the next player to play.  The transition matrix $\bm P^*$ has the form
\begin{equation}\label{P^*part1}
P^*((\bm x,i),(\bm x^i,j)):=\begin{cases}N^{-1}p_{m_i(\bm x)}&\text{if $x_i=0$,}\\N^{-1}q_{m_i(\bm x)}&\text{if $x_i=1$,}\end{cases}
\end{equation}
\begin{equation}\label{P^*part2}
P^*((\bm x,i),(\bm x,j)):=\begin{cases}N^{-1}q_{m_i(\bm x)}&\text{if $x_i=0$,}\\N^{-1}p_{m_i(\bm x)}&\text{if $x_i=1$,}\end{cases}
\end{equation}
for all $(\bm x,i)\in\Sigma^*$ and $j=1,2,\ldots,N$, where $q_m:=1-p_m$ for $m=0,1,2,3$, and $P^*((\bm x,i),(\bm y,j))=0$ otherwise.

\begin{lemma}\label{ergodicity-P^*}
Assume that the Markov chain in $\Sigma$ with one-step transition matrix $\bm P$ given by (\ref{x to x^i}) and (\ref{x to x}) is ergodic (see Lemma~\ref{ergodicity-P} for necessary and sufficient conditions) with unique stationary distribution $\bm\pi$.  Then there exists $S^*\subset\Sigma^*$ such that, with respect to $\bm P^*$ of (\ref{P^*part1}) and (\ref{P^*part2}), $S^*$ is closed, irreducible, and aperiodic, and all states in $\Sigma^*-S^*$ are transient.  In particular, the Markov chain with one-step transition matrix $\bm P^*$ is ergodic.  Its unique stationary distribution $\bm\pi^*$ is given by $\pi^*(\bm x,i):=N^{-1}\pi(\bm x)$.
\end{lemma}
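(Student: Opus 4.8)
The plan is to set $S^* := S\times\{1,\ldots,N\}$, where $S\subseteq\Sigma$ is the unique closed, irreducible, aperiodic class of $\bm P$ furnished by the hypothesis together with the final paragraph of Lemma~\ref{ergodicity-P}, and then to check in turn closure, transience of the complement, irreducibility, and aperiodicity, after which ergodicity of $\bm P^*$ follows from finiteness of $\Sigma^*$. The guiding observation is that $\bm P^*$ factors as $P^*((\bm x,i),(\bm y,j)) = N^{-1}Q_i(\bm x,\bm y)$, where $Q_i(\bm x,\cdot)$ is the one-player kernel recording the effect of player $i$'s turn, so that $Q_i(\bm x,\bm x^i)+Q_i(\bm x,\bm x)=1$ and all other values vanish; comparing with (\ref{x to x^i})--(\ref{x to x}) shows that summing over the acting player recovers $P(\bm x,\bm y)=N^{-1}\sum_{k=1}^N Q_k(\bm x,\bm y)$. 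The feature I will use repeatedly is that the index $j$ of the next player is resampled uniformly at each step, independently of the rest of the transition.

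The quick items come first. For the stationary law, $\sum_{(\bm y,k)}N^{-1}\pi(\bm y)\,P^*((\bm y,k),(\bm x,i)) = N^{-1}\sum_{\bm y}\pi(\bm y)\big(N^{-1}\sum_k Q_k(\bm y,\bm x)\big) = N^{-1}\sum_{\bm y}\pi(\bm y)P(\bm y,\bm x) = N^{-1}\pi(\bm x)$ by stationarity of $\bm\pi$, so $\pi^*(\bm x,i)=N^{-1}\pi(\bm x)$ is stationary for $\bm P^*$, with uniqueness deferred to ergodicity. Closure of $S^*$ is immediate, since from $(\bm x,i)$ with $\bm x\in S$ the only reachable first coordinates are $\bm x$ and, when $Q_i(\bm x,\bm x^i)>0$, the state $\bm x^i$; in the latter case $P(\bm x,\bm x^i)=N^{-1}Q_i(\bm x,\bm x^i)>0$ forces $\bm x^i\in S$. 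For transience of $\Sigma^*-S^*$, I would note that because $j$ is resampled uniformly, the first-coordinate process $(\bm X_n)_{n\ge1}$ of any $\bm P^*$-chain is itself a $\bm P$-chain; hence $(\bm x,i)$ with $\bm x\notin S$ is visited only finitely often, $\bm x$ being transient for $\bm P$.

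The substantive step is irreducibility of $S^*$, obtained by lifting $\bm P$-paths and exploiting the freedom in the next-player index. Fix $(\bm x,i),(\bm y,j)\in S^*$. The first transition out of $(\bm x,i)$ is forced to use player $i$ and lands at $(\bm w,k)$ with $\bm w\in\{\bm x,\bm x^i\}$ and $k$ arbitrary; in either case $\bm w\in S$, using closure of $S$ as above when $\bm w=\bm x^i$. Now take a $\bm P$-path $\bm w=\bm z_0\to\cdots\to\bm z_m=\bm y$ from irreducibility of $S$, and for each $t$ choose a player $k_t$ with $Q_{k_t}(\bm z_t,\bm z_{t+1})>0$, which exists because $P(\bm z_t,\bm z_{t+1})=N^{-1}\sum_k Q_k(\bm z_t,\bm z_{t+1})>0$. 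Since the entering index is free at every step, setting the first landing index equal to $k_0$ and then following $(\bm z_0,k_0)\to(\bm z_1,k_1)\to\cdots\to(\bm z_m,j)$ gives a legal $\bm P^*$-path from $(\bm x,i)$ to $(\bm y,j)$.

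For aperiodicity it suffices to produce one self-loop in $S^*$, i.e.\ a state $(\bm z,i)$ with $Q_i(\bm z,\bm z)>0$, and for this it is enough to find $\bm z\in S$ with $P(\bm z,\bm z)>0$. Such a $\bm z$ always exists: if instead $P(\bm z,\bm z)=0$ for some $\bm z\in S$, then at each position $i$ player $i$'s turn is deterministic, so $\bm z^i\in S$; but $m_i(\bm z^i)=m_i(\bm z)$ because flipping coordinate $i$ leaves its neighbors untouched, which turns the forced flip at $\bm z$ into a contribution $N^{-1}$ to $P(\bm z^i,\bm z^i)$, contradicting $P(\bm z^i,\bm z^i)=0$. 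I expect this self-loop bookkeeping, and to a lesser extent the handling of the prescribed initial index in the path-lifting, to be the only points demanding care; the stationary formula and the transience and closure claims are routine, and once $S^*$ is shown closed, irreducible, and aperiodic with transient complement, ergodicity of $\bm P^*$ and uniqueness of $\bm\pi^*$ are automatic on the finite space $\Sigma^*$.
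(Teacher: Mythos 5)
Your proof is correct and takes essentially the same route as the paper: the same choice $S^*=S\times\{1,2,\ldots,N\}$, irreducibility by lifting $\bm P$-paths through the freely resampled next-player index after the forced first move (the paper's two cases flip/stay), and the same direct verification that $\pi^*(\bm x,i)=N^{-1}\pi(\bm x)$ is stationary, with the closure, transience, and aperiodicity details (which the paper dismisses as ``similar'') filled in correctly. One cosmetic point: in the aperiodicity step the contradiction hypothesis should read $P(\bm z,\bm z)=0$ for \emph{every} $\bm z\in S$ rather than ``for some,'' though your argument in fact shows directly that either $\bm z$ or $\bm z^i\in S$ carries a self-loop, which is all that is needed.
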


\begin{proof}
By Lemma~\ref{ergodicity-P}, there exists $S\subset\Sigma$ such that, with respect to $\bm P$, $S$ is closed, irreducible, and aperiodic, and all states in $\Sigma-S$ are transient.  We claim that we can take $S^*:=S\times\{1,2,\ldots,N\}$.  Given $\bm x,\bm y\in S$ and $i,j\in\{1,2,\ldots,N\}$, we must show that the $\bm P^*$-chain can get from $(\bm x,i)$ to $(\bm y,j)$.  There are two (possibly overlapping) cases:  $P^*((\bm x,i),(\bm x^i,k))>0$ for all $k$ or $P^*((\bm x,i),(\bm x,k))>0$ for all $k$.  (These probabilities do not depend on $k$ and they sum to $N^{-1}$ for fixed $k$, so at least one of them must be positive.)  In the first case,   it suffices to note that the $\bm P$-chain can get from $\bm x^i$ to $\bm y$.  In the second case, it suffices to note that the $\bm P$-chain can get from $\bm x$ to $\bm y$.  This implies the stated irreducibility.  The other properties follow in a similar way.  Since $\bm\pi$ is stationary for $\bm P$, we have
\begin{eqnarray*}
\pi^*(\bm x,j)&=&N^{-1}\pi(\bm x)=N^{-1}\sum_{i=1}^N\pi(\bm x^i)P(\bm x^i,\bm x)+N^{-1}\pi(\bm x)P(\bm x,\bm x)\\
&=&\sum_{i=1}^N\pi^*(\bm x^i,i)P^*((\bm x^i,i),(\bm x,j))+\sum_{i=1}^N\pi^*(\bm x,i)P^*((\bm x,i),(\bm x,j)),
\end{eqnarray*}
so $\bm\pi^*$, defined as in the statement of the lemma, is stationary for $\bm P^*$.
\end{proof}

Notice also that the profit corresponding to each nonzero entry of $\bm P^*$ is equal to $\pm1$, so the SLLN holds and there are several formulas for the mean parameter, as we now show.

\begin{theorem}\label{SLLN-thm}
Assume that the Markov chain in $\Sigma$ with one-step transition matrix $\bm P$ given by (\ref{x to x^i}) and (\ref{x to x}) is ergodic (see Lemma~\ref{ergodicity-P} for necessary and sufficient conditions) with unique stationary distribution $\bm\pi$.  Let $\{(\bm X(n),I(n))\}_{n\ge0}$ be the Markov chain in $\Sigma^*$ described above, with an arbitrary initial distribution.  Define 
$$
\xi_n:=w((\bm X(n-1),I(n-1)),(\bm X(n),I(n))), \qquad n\ge1,
$$
where the payoff function $w$ is 1 for a win and $-1$ for a loss, determined by whether the corresponding entry of $\bm P^*$ is of the form $N^{-1}p_m$ or $N^{-1}q_m$.  Let $S_n:=\xi_1+\cdots+\xi_n$ for each $n\ge1$.  Then $n^{-1}S_n\to\mu^N$ {\rm a.s.}, where the mean parameter $\mu^N$ can be expressed in terms of $\bm\pi$ as 
\begin{equation}\label{mu1}
\mu^N={1\over N}\sum_{\bm x\in\Sigma}\pi(\bm x)\sum_{i=1}^N [p_{m_i(\bm x)}-q_{m_i(\bm x)}],
\end{equation}
in terms of certain two-dimensional marginals $\pi_{1,3}=\pi_{2,4}=\cdots=\pi_{N-1,1}=\pi_{N,2}$ of $\bm\pi$ as
\begin{eqnarray}\label{mu2}
\quad\mu^N&=&\sum_{u=0}^1\sum_{v=0}^1\pi_{1,3}(u,v)(p_{2u+v}-q_{2u+v})\\
&=&2[\pi_{1,3}(0,0)p_0+\pi_{1,3}(0,1)p_1+\pi_{1,3}(1,0)p_2+\pi_{1,3}(1,1)p_3]-1,\nonumber
\end{eqnarray}
or in terms of the one-dimensional marginals $\pi_1=\pi_2=\cdots=\pi_N$ of $\bm\pi$ as
\begin{equation}\label{mu3}
\mu^N=\pi_1(1)-\pi_1(0)=2\pi_1(1)-1.
\end{equation}
\end{theorem}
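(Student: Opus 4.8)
The plan is to apply the SLLN of Ethier and Lee (2009) to the chain $\{(\bm X(n),I(n))\}$ governed by $\bm P^*$, which is ergodic by Lemma~\ref{ergodicity-P^*}, and then to simplify the resulting mean in three ways. The cited SLLN applies because the payoff attached to each nonzero entry of $\bm P^*$ is bounded (indeed equal to $\pm1$), and because, by Lemma~\ref{ergodicity-P^*}, from any initial distribution the chain leaves the transient states $\Sigma^*-S^*$ and enters the unique closed, irreducible, aperiodic class $S^*$ in finite time almost surely. It yields $n^{-1}S_n\to\mu^N$ a.s.\ with
\[
\mu^N=\sum_{(\bm x,i)\in\Sigma^*}\pi^*(\bm x,i)\,\bar w(\bm x,i),\qquad \bar w(\bm x,i):=\sum_{(\bm y,j)}P^*((\bm x,i),(\bm y,j))\,w((\bm x,i),(\bm y,j)),
\]
the stationary average of the expected one-step payoff.

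For (\ref{mu1}) I would compute $\bar w(\bm x,i)$ directly from (\ref{P^*part1})--(\ref{P^*part2}): reading off the entries shows that, irrespective of whether $x_i=0$ or $x_i=1$, the chain records a win (payoff $+1$) with probability $N^{-1}p_{m_i(\bm x)}$ and a loss (payoff $-1$) with probability $N^{-1}q_{m_i(\bm x)}$, so that $\bar w(\bm x,i)=N^{-1}(p_{m_i(\bm x)}-q_{m_i(\bm x)})$. Substituting $\pi^*(\bm x,i)=N^{-1}\pi(\bm x)$ from Lemma~\ref{ergodicity-P^*} and summing over $i$ gives (\ref{mu1}). For (\ref{mu2}) I would interchange the two sums in (\ref{mu1}) and observe that $m_i(\bm x)=2x_{i-1}+x_{i+1}$ depends on $\bm x$ only through $(x_{i-1},x_{i+1})$, so the inner sum over $\bm x$ collapses to a sum against the two-dimensional marginal $\pi_{i-1,i+1}$. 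By Lemma~\ref{invariance} applied to the cyclic group generated by (\ref{cyclic}), the stationary law is rotation invariant, whence $\pi_{i-1,i+1}$ is independent of $i$ and equals $\pi_{1,3}$; thus all $N$ summands coincide, the factor $N^{-1}$ cancels, and writing $u=x_{i-1}$, $v=x_{i+1}$ produces the first line of (\ref{mu2}). The second line is immediate from $q_m=1-p_m$ and $\sum_{u,v}\pi_{1,3}(u,v)=1$.

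Formula (\ref{mu3}) is the one requiring a genuinely new idea, and I expect it to be the main point. Starting again from (\ref{mu1}), rotation invariance reduces $\mu^N$ to the single-player expression $\sum_{\bm x}\pi(\bm x)(p_{m_1(\bm x)}-q_{m_1(\bm x)})$. To turn this into $\pi_1(1)-\pi_1(0)$ I would use a flux-balance consequence of stationarity: because $\bm\pi$ is stationary for $\bm P$, the marginal probability $\pi_1(1)$ is unchanged in one step, and the only transitions that alter $x_1$ are player $1$ winning from a loser state (rate $N^{-1}\sum_{\bm x:\,x_1=0}\pi(\bm x)p_{m_1(\bm x)}$) or losing from a winner state (rate $N^{-1}\sum_{\bm x:\,x_1=1}\pi(\bm x)q_{m_1(\bm x)}$). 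Equating these two fluxes and substituting $q_m=1-p_m$ into the split of $\sum_{\bm x}\pi(\bm x)(p_{m_1(\bm x)}-q_{m_1(\bm x)})$ over $\{x_1=0\}$ and $\{x_1=1\}$ causes the win-minus-loss bookkeeping to telescope to $\pi_1(1)-\pi_1(0)$, and the final equality $2\pi_1(1)-1$ follows from $\pi_1(0)+\pi_1(1)=1$. The only subtlety to handle carefully is confirming that the Ethier and Lee mean formula truly records the per-turn gambling profit, so that a win always carries weight $p_{m_i(\bm x)}$ and a loss weight $q_{m_i(\bm x)}$ regardless of the player's current status; this is exactly what the structure of $\bm P^*$ in (\ref{P^*part1})--(\ref{P^*part2}) encodes, and it is what makes $\bar w$ independent of whether $x_i=0$ or $x_i=1$.
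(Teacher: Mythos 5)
Your proposal is correct in substance, and for (\ref{mu1}) and (\ref{mu2}) it follows the paper's route exactly: the Ethier--Lee (2009) SLLN applied on $S^*$, transience of $\Sigma^*-S^*$ handling the arbitrary initial distribution, and rotation invariance via Lemma~\ref{invariance} to collapse (\ref{mu1}) to the marginal $\pi_{1,3}$. One bookkeeping slip: $\bar w(\bm x,i)$ equals $p_{m_i(\bm x)}-q_{m_i(\bm x)}$, not $N^{-1}(p_{m_i(\bm x)}-q_{m_i(\bm x)})$, because each of the $N$ equally likely choices of the next player $j$ contributes $N^{-1}p_{m_i(\bm x)}$ to the total win probability, and these must be summed over $j$. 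As written, $\sum_{(\bm x,i)}\pi^*(\bm x,i)\bar w(\bm x,i)$ with your $\bar w$ would give (\ref{mu1}) divided by $N$. The fix is immediate and nothing downstream is affected, since you work from the correct (\ref{mu1}) thereafter.

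Where you genuinely depart from the paper is (\ref{mu3}), and your route is valid. The paper constructs a stationary version of the chain indexed by ${\bf Z}$ and computes $\pi_1(1)=\pi_2(1)$ by conditioning on the last time player $2$ played, summing the geometric series $\sum_{n\ge1}(1-1/N)^{n-1}N^{-1}$ against $\pi_{1,3}$ to obtain $\pi_1(1)=\sum_{u,v}\pi_{1,3}(u,v)p_{2u+v}$, and then reads off (\ref{mu3}) by comparison with (\ref{mu2}). You instead use one-step flux balance: stationarity of $\bm\pi$ under $\bm P$ forces $A:=\sum_{\bm x:x_1=0}\pi(\bm x)p_{m_1(\bm x)}$ to equal $B:=\sum_{\bm x:x_1=1}\pi(\bm x)q_{m_1(\bm x)}$, and splitting the single-player expression over $\{x_1=0\}$ and $\{x_1=1\}$ with $q_m=1-p_m$ gives $\mu^N=2A-\pi_1(0)+\pi_1(1)-2B=\pi_1(1)-\pi_1(0)$; I checked this algebra and it closes. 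Your argument is more elementary---no doubly infinite stationary process, no look-back/renewal decomposition---and it delivers the identity $\pi_1(1)=\sum_{u,v}\pi_{1,3}(u,v)p_{2u+v}$ as a corollary of (\ref{mu2}) and (\ref{mu3}) rather than needing it as an input. What the paper's approach buys is the probabilistic explanation of why naive conditioning fails (the conditional probability that player $2$ is a winner given the current statuses of players $1$ and $3$ is \emph{not} $p_{2u+v}$; one must look back to player $2$'s last play), which is conceptually illuminating; as a pure proof of (\ref{mu3}), your flux-balance argument is shorter.
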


\begin{remark}
Another formula for $\mu^N$, better suited to numerical computation, was given by Ethier and Lee (2012).
\end{remark}

\begin{proof}
Theorem~1 of Ethier and Lee (2009), applied to the Markov chain in $\Sigma^*$ with one-step transition matrix $\bm P^*$ (restricted to $S^*$ of Lemma~\ref{ergodicity-P^*} to ensure irreducibility and aperiodicity), gives the SLLN with $\mu^N=\bm\pi^*\dot{\bm P^*}\bm1$, where $\dot{\bm P^*}$ is $\bm P^*$ with each $q_m$ replaced by $-q_m$ and $\bm 1:=(1,1,\ldots,1)^\T$, and this implies (\ref{mu1}).  Actually, this works directly if the initial distribution is concentrated on $S^*$.  If it is not, the fact that each state in $\Sigma^*-S^*$ is transient implies that the chain reaches $S^*$ with probability 1, so the SLLN is unaffected.  

Next, using (\ref{mu1}) and the rotation invariance property (Lemma~\ref{invariance}), we have
\begin{eqnarray*}
\mu^N&=&{1\over N}\sum_{\bm x\in\Sigma}\pi(\bm x)\sum_{i=1}^N [p_{m_i(\bm x)}-q_{m_i(\bm x)}]\\
&=&{1\over N}\sum_{i=1}^N \sum_{u=0}^1\sum_{v=0}^1\pi_{i-1,i+1}(u,v)(p_{2u+v}-q_{2u+v})\\
&=&\sum_{u=0}^1\sum_{v=0}^1\pi_{1,3}(u,v)(p_{2u+v}-q_{2u+v}),
\end{eqnarray*}
where $\pi_{0,2}:=\pi_{N,2}$ and $\pi_{N-1,N+1}:=\pi_{N-1,1}$, which implies (\ref{mu2}).  

Finally, turning to (\ref{mu3}), we notice that the conditional probability that player 2 is a winner, given that players 1 and 3 are losers, is not equal to $p_0$.  Instead we have to look back to the last time player 2 played before we condition on the status of player 1 and that of player 3.  With this in mind, we let $\{(\bm X(n),I(n))\}_{n\in{\bf Z}}$ be a stationary version of the Markov chain in $\Sigma^*$ with time indexed by ${\bf Z}$.  Then
\begin{eqnarray*}
\pi_1(1)&=&\pi_2(1)=\pi\{\bm x: x_2=1\}=\P(X_2(0)=1)\\
&=&\sum_{n=1}^\infty\P(X_2(-n+1)=1,I(-n)=2,I(-n+1)\ne2,\ldots,I(-1)\ne2)\\
&=&\sum_{n=1}^\infty\bigg(1-{1\over N}\bigg)^{n-1}\P(X_2(-n+1)=1,I(-n)=2)\\
&=&\sum_{n=1}^\infty\bigg(1-{1\over N}\bigg)^{n-1}\sum_{u=0}^1\sum_{v=0}^1\P(X_1(-n)=u,X_3(-n)=v)\\
&&\quad{}\cdot\P(X_2(-n+1)=1,I(-n)=2)\mid X_1(-n)=u,X_3(-n)=v)\\
&=&\sum_{n=1}^\infty\bigg(1-{1\over N}\bigg)^{n-1}\sum_{u=0}^1\sum_{v=0}^1\pi_{1,3}(u,v)N^{-1}p_{2u+v}\\
&=&\pi_{1,3}(0,0)p_0+\pi_{1,3}(0,1)p_1+\pi_{1,3}(1,0)p_2+\pi_{1,3}(1,1)p_3,
\end{eqnarray*}
where $\{I(-n)=2,I(-n+1)\ne2,\ldots,I(-1)\ne2\}:=\{I(-1)=2\}$ if $n=1$.
Therefore, (\ref{mu3}) follows from (\ref{mu2}).
\end{proof}

We conclude with an application of the SLLN.  The first conclusion will play a minor role in the next section, and the second conclusion is included for completeness.

\begin{corollary}\label{couple}
Let us refer to the Markov chain in $\Sigma$ with one-step transition matrix $\bm P$ given by (\ref{x to x^i}) and (\ref{x to x}) as the Markov chain with parameter vector $(p_0,p_1,p_2,p_3)$, and let us denote $\mu^N$ of Theorem~\ref{SLLN-thm} by $\mu^N(p_0,p_1,p_2,p_3)$ to emphasize its dependence on the parameter vector.

\emph{(a)} If the Markov chain with parameter vector $(p_0,p_1,p_2,p_3)$ is ergodic, then the Markov chain with parameter vector $(q_3,q_2,q_1,q_0)$ is ergodic, and
$$
\mu^N(p_0,p_1,p_2,p_3)=-\mu^N(q_3,q_2,q_1,q_0).
$$
In particular, if $p_0+p_3=1$ and $p_1+p_2=1$, then $\mu^N(p_0,p_1,p_2,p_3)=0$.

\emph{(b)} If the Markov chain with parameter vector $(p_0,p_1,p_2,p_3)$ is ergodic, then
$$
2\min(p_0,p_1,p_2,p_3)-1\le\mu^N(p_0,p_1,p_2,p_3)\le2\max(p_0,p_1,p_2,p_3)-1.
$$
\end{corollary}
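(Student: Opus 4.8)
The plan is to handle the two parts by separate, elementary symmetry arguments: part (a) rests on a global bit-flip involution, and part (b) is just the observation that $\mu^N$ is an affine image of a convex combination of the $p_m$. For part (a) I would introduce the involution $\phi\colon\Sigma\to\Sigma$ defined by $\phi(\bm x):=(1-x_1,\ldots,1-x_N)$, which reverses the status of every player and satisfies $\phi(\bm x^i)=\phi(\bm x)^i$. The engine of the argument is the elementary identity $m_i(\phi(\bm x))=3-m_i(\bm x)$, which is immediate from $m_i(\bm x)=2x_{i-1}+x_{i+1}$ together with the fact that flipping both neighbors sends $(x_{i-1},x_{i+1})$ to $(1-x_{i-1},1-x_{i+1})$.

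First I would check that $\phi$ conjugates the transition matrix $\bm P$ with parameter vector $(p_0,p_1,p_2,p_3)$ into the transition matrix $\bm P'$ with parameter vector $(q_3,q_2,q_1,q_0)$, i.e.\ that $P'(\phi(\bm x),\phi(\bm y))=P(\bm x,\bm y)$ for all $\bm x,\bm y$. Writing $p'_m:=q_{3-m}$, so that $q'_m=p_{3-m}$, one verifies the two cases of (\ref{x to x^i}): when $x_i=0$ (hence $\phi(\bm x)_i=1$) the original off-diagonal probability $N^{-1}p_{m_i(\bm x)}$ matches $N^{-1}q'_{m_i(\phi(\bm x))}=N^{-1}q'_{3-m_i(\bm x)}=N^{-1}p_{m_i(\bm x)}$, and symmetrically when $x_i=1$; the diagonal entries (\ref{x to x}) then agree because rows sum to one. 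Since $\phi$ is a bijection intertwining $\bm P$ and $\bm P'$, ergodicity of the original chain transfers to the chain with parameters $(q_3,q_2,q_1,q_0)$, whose stationary distribution is the pushforward $\bm y\mapsto\pi(\phi(\bm y))$. In particular its one-dimensional marginal satisfies $\pi'_1(1)=\pi_1(0)=1-\pi_1(1)$, so formula (\ref{mu3}) gives $\mu^N(q_3,q_2,q_1,q_0)=2\pi'_1(1)-1=-(2\pi_1(1)-1)=-\mu^N(p_0,p_1,p_2,p_3)$. The ``in particular'' clause is then immediate, since $p_0+p_3=1$ and $p_1+p_2=1$ say exactly that $(q_3,q_2,q_1,q_0)=(p_0,p_1,p_2,p_3)$, forcing $\mu^N=-\mu^N$.

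For part (b) I would invoke the two-dimensional marginal formula (\ref{mu2}). The four coefficients $\pi_{1,3}(0,0)$, $\pi_{1,3}(0,1)$, $\pi_{1,3}(1,0)$, $\pi_{1,3}(1,1)$ are nonnegative and sum to one, being the probabilities of the four joint configurations of players $1$ and $3$; hence the bracketed quantity in (\ref{mu2}) is a convex combination of $p_0,p_1,p_2,p_3$ and lies between $\min(p_0,p_1,p_2,p_3)$ and $\max(p_0,p_1,p_2,p_3)$. Multiplying by $2$ and subtracting $1$ yields the stated two-sided bound.

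The only genuine obstacle is the index bookkeeping in the conjugation step of part (a): one must track that flipping a player interchanges the win-probability $p$ with the loss-probability $q$ while the neighbor index $m_i$ is simultaneously sent to $3-m_i$, and that these two reversals compose to produce precisely the relabeling $(q_3,q_2,q_1,q_0)$ rather than some other permutation of the parameters. Once the identity $m_i(\phi(\bm x))=3-m_i(\bm x)$ and the substitution $q'_m=p_{3-m}$ are in place, the verification and both conclusions follow routinely.
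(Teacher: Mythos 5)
Your proof is correct, but it takes a genuinely different route from the paper on both parts. For part (a), the paper gives no in-house argument at all: it simply cites Ethier and Lee (2012), where the result is proved by a coupling. Your global bit-flip involution $\phi$, together with the identity $m_i(\phi(\bm x))=3-m_i(\bm x)$ and the substitution $p'_m=q_{3-m}$, makes the claim self-contained: the involution intertwines the two transition matrices, so ergodicity and the stationary distribution push forward, and since $\pi'_1(1)=\pi_1(0)$, formula (\ref{mu3}) immediately yields the sign reversal. This is arguably the cleanest possible write-up (and is presumably the essence of the cited coupling, realized as a deterministic conjugation). For part (b), the paper's proof is probabilistic: it constructs the chain from i.i.d.\ uniforms $U_k$ and players $I_k$, observes the pathwise sandwich $S_n^{\min(p_0,p_1,p_2,p_3)}\le S_n\le S_n^{\max(p_0,p_1,p_2,p_3)}$ between two i.i.d.\ profit processes, and then invokes the i.i.d.\ SLLN together with Theorem~\ref{SLLN-thm}. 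You instead read the bound off formula (\ref{mu2}) directly, noting that the bracketed quantity is a convex combination of $p_0,p_1,p_2,p_3$ because the marginal probabilities $\pi_{1,3}(u,v)$ are nonnegative and sum to one. Your argument is shorter and purely algebraic once Theorem~\ref{SLLN-thm} is in hand; what the paper's coupling buys in exchange is strictly stronger pathwise information (domination of the profit process at every finite time $n$, not merely of the almost-sure limit), which is in the spirit of the couplings used elsewhere in the paper.
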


\begin{remark}
$(i)$ Game $A$ is the special case of game $B$ in which $p_0=p_1=p_2=p_3=1/2$ and game $C$ is the equally weighted random mixture of game $A$ and game $B$.  Let $\mu_B^N$ (resp., $\mu_C^N$) denote the mean profit per turn to the ensemble of $N\ge3$ players always playing game $B$ (resp., $C$).  Then
\begin{equation}\label{mu_B,mu_C}
\begin{split}
\mu_B^N&:=\mu^N(p_0,p_1,p_2,p_3),\\
\mu_C^N&:=\mu^N((1/2+p_0)/2,(1/2+p_1)/2,(1/2+p_2)/2,(1/2+p_3)/2).
\end{split}
\end{equation}
We say the \textit{Parrondo effect} is present if $\mu_B^N\le0$ and $\mu_C^N>0$, whereas the \textit{anti-Parrondo} effect is present if $\mu_B^N\ge0$ and $\mu_C^N<0$.  Part (a) implies that the Parrondo effect is present for the parameter vector $(p_0,p_1,p_2,p_3)$ if and only if the anti-Parrondo effect is present for the parameter vector $(q_3,q_2,q_1,q_0)$.  It follows that the ``Parrondo region'' and the ``anti-Parrondo region'' have the same (four-dimensional) volume.

$(ii)$ Part (b) generalizes the obvious identity $\mu^N(p,p,p,p)=2p-1$. 
\end{remark}

\begin{proof}
(a) This was proved by Ethier and Lee (2012) using a coupling argument.   

(b) Here we use another coupling.   Let $U_1,U_2,\ldots$ be i.i.d.\ uniform $(0,1)$ random variables, and let $I_1,I_2,\ldots$ be an independent sequence of i.i.d.\ uniform $\{1,2,\ldots,N\}$ random variables.  Then our Markov chain $\{\bm X(n)\}$ can be constructed in such a way that, for each $n\ge1$, $\bm X(n)$ is a function of $\bm X(n-1)$, $I_n$, and $U_n$ with
$$
S_n:=\sum_{k=1}^n\Big[2\cdot1_{(0,p_{m_{I_k}(\bm X(k-1))}]}(U_k)-1\Big]
$$
representing the total profit after $n$ turns to the ensemble of $N$ players.  With $S_n^p:=\sum_{k=1}^n[2\cdot1_{(0,p]}(U_k)-1]$, we have
$$
S_n^{\min(p_0,p_1,p_2,p_3)}\le S_n\le S_n^{\max(p_0,p_1,p_2,p_3)},\qquad n\ge1,
$$
so the desired result follows from Theorem~\ref{SLLN-thm} and the i.i.d.\ SLLN.
\end{proof}

\section{The case $p_0=1$, $p_3=0$}\label{p0=1,p3=0}

Suppose $p_0=1$ and $p_3=0$.  If we also assume that $p_1=p_2\in(1/2,1)$, then we can confirm the Parrondo effect for all even $N\ge4$.  

\begin{theorem}\label{p0=1,p3=0-thm}
Let $p_0=1$, $p_1=p_2\in(1/2,1)$, and $p_3=0$.  Let $\mu^N_B$ (resp., $\mu^N_C$) denote the mean profit per turn to the ensemble of $N\ge3$ players always playing game $B$ (resp., $C$); cf.\ (\ref{mu_B,mu_C}).  Then $\mu^N_B=0$ for all even $N\ge4$, $\mu^N_B>0$ for all odd $N\ge3$, and $\mu^N_C>0$ for all $N\ge3$.  In particular, the Parrondo effect is present if and only if $N$ is even.
\end{theorem}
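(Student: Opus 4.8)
The plan is to treat the three assertions separately, exploiting the structure that the choice $p_0=1$, $p_3=0$, $p_1=p_2\in(1/2,1)$ imposes on the stationary distribution via the SLLN formula (\ref{mu3}), namely $\mu^N=2\pi_1(1)-1$. The whole theorem reduces to understanding the one-dimensional marginal $\pi_1$, so I would begin by invoking Lemma~\ref{ergodicity-P}(f) to pin down the relevant closed irreducible set. For $N$ even, part~(f) says the only recurrent states are the two alternating configurations $01\cdots01$ and $10\cdots10$, and here I must be careful: with $p_1=p_2\in(1/2,1)$ we are \emph{not} in either exceptional subcase of~(f), so the chain is \emph{not} ergodic (the two alternating states are both absorbing). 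The cleanest route is therefore to apply Lemma~\ref{ergodicity-P}(f) together with Corollary~\ref{couple}(a): since $p_0=1,p_3=0,p_1=p_2$ gives exactly the self-dual relations $p_0+p_3=1$ and $p_1+p_2=1$, part~(a) forces $\mu^N_B=0$ whenever the chain is ergodic. For even $N$ the chain is not ergodic, so I would instead argue directly that each absorbing alternating state has $\pi_1(1)=1/2$ by symmetry, whence $\mu^N_B=2\cdot\tfrac12-1=0$; the profit SLLN still applies on each absorbing singleton.

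For odd $N$, Lemma~\ref{ergodicity-P}(f) gives a genuine closed irreducible aperiodic set consisting of the $2N$ ``almost-alternating'' states, each having exactly one adjacent pair (either a $00$ or a $11$ defect) among otherwise alternating symbols. The chain is ergodic here, and the explicit transition description in the proof of~(f) — the defect performs a random walk around the circle, switching type $0\!\leftrightarrow\!1$ as it moves, with $(0,i)\to(1,i\pm1)$ at rates $N^{-1}p_2,N^{-1}p_1$ and $(1,i)\to(0,i\pm1)$ at rates $N^{-1}q_1,N^{-1}q_2$ — lets me compute $\bm\pi$ on these $2N$ states. By the rotation invariance of Lemma~\ref{invariance} the stationary weight depends only on the defect \emph{type}, so $\pi$ is constant $=a$ on the $N$ states of type $(0,i)$ and constant $=b$ on the $N$ states of type $(1,i)$, with $Na+Nb=1$. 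Balancing the flow between the two types gives $a\,p_1=b\,q_2$ type of relations (I will extract the exact ratio $a/b$ from detailed or cycle balance), and then I compute $\pi_1(1)$ by counting, in each of the $2N$ states, whether position~$1$ carries a $1$. Because an odd-length circle with a single defect has one more symbol of the defect's ``minority'' parity, the count of $1$s is not symmetric between the two defect types, and plugging into $\mu^N=2\pi_1(1)-1$ should yield a strictly positive value proportional to $(p_1-q_1)=2p_1-1>0$.

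For $\mu^N_C>0$ for all $N\ge3$, the mixed game has parameter vector $((1/2+1)/2,(1/2+p_1)/2,(1/2+p_2)/2,(1/2+0)/2)=(3/4,\,(1/2+p_1)/2,\,(1/2+p_1)/2,\,1/4)$, which lies in the interior $(0,1)^4$, so by Lemma~\ref{ergodicity-P}(a) the chain is ergodic for every $N\ge3$ and (\ref{mu3}) applies cleanly. Here I would again use $p_1=p_2$ and Corollary~\ref{symm} to get $\pi_{1,3}(0,1)=\pi_{1,3}(1,0)$, reducing (\ref{mu2}) to $\mu^N_C=2[\tfrac34\pi_{1,3}(0,0)+\tfrac{1/2+p_1}{2}(\pi_{1,3}(0,1)+\pi_{1,3}(1,0))+\tfrac14\pi_{1,3}(1,1)]-1$; since the coefficients $3/4$ and $1/4$ are skewed in favor of the $(0,0)$ state relative to the fair value $1/2$, and the self-dual cancellation that killed $\mu^N_B$ is broken by the mixing, one expects positivity. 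The honest difficulty is that for game $C$ I have no closed form for $\pi_{1,3}$; the main obstacle is establishing $\mu^N_C>0$ uniformly in $N$ without an explicit stationary distribution. I expect this to require a comparison/coupling argument — bounding the $C$-chain's marginal against a tractable reference chain, or showing $\pi_{1,3}(0,0)+\text{(weighted)}\dots$ exceeds the threshold via a monotonicity or FKG-type inequality on $\Sigma$ — and it is here, rather than in the even/odd $\mu^N_B$ dichotomy, that the real work lies.
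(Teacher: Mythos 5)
Your treatment of $\mu^N_B$ tracks the paper's proof: for even $N$ the two alternating states are absorbing and from either one the ensemble wins or loses one unit with probability $1/2$ each, so the i.i.d.\ SLLN gives $\mu^N_B=0$; for odd $N$ the defect performs a random walk on the $2N$ almost-alternating states, rotation invariance (Lemma~\ref{invariance}) forces the stationary weights to be constant on each defect type, and counting $1$s via (\ref{mu3}) yields $\mu^N_B=(2p_1-1)/N>0$ (the paper computes $(p_1+p_2-1)/N$ in slightly greater generality). One genuine error along the way: you assert that $p_0=1$, $p_3=0$, $p_1=p_2$ gives the self-dual relations $p_0+p_3=1$ \emph{and} $p_1+p_2=1$. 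The second is false, since $p_1+p_2=2p_1\in(1,2)$ here; had it been true, Corollary~\ref{couple}(a) would force $\mu^N_B=0$ for odd $N$ (where the chain restricted to the $2N$-state class is ergodic), contradicting the very statement being proved. The slip is not load-bearing for your even-$N$ argument, but it signals a misreading of Corollary~\ref{couple}(a).

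The real gap is the third assertion, which you explicitly leave unproved: you hope for a coupling, comparison, or FKG-type inequality to get $\mu^N_C>0$ uniformly in $N$, and you correctly sense that this is where the work lies, but none of that machinery is needed and none of it is developed. The paper's argument is short and entirely algebraic: use the \emph{two different} expressions for the same quantity $\mu^N_C$ provided by Theorem~\ref{SLLN-thm}. With parameter vector $(3/4,(1/2+p_1)/2,(1/2+p_1)/2,1/4)$ and the reflection symmetry $\pi_{1,3}(0,1)=\pi_{1,3}(1,0)$ of Corollary~\ref{symm}, formula (\ref{mu3}) gives $\mu^N_C=\pi_{1,3}(1,1)-\pi_{1,3}(0,0)$, while formula (\ref{mu2}) gives
\begin{equation*}
\mu^N_C=\tfrac12\pi_{1,3}(0,0)+(2p_1-1)\pi_{1,3}(0,1)-\tfrac12\pi_{1,3}(1,1)
=(2p_1-1)\pi_{1,3}(0,1)-\tfrac12\mu^N_C,
\end{equation*}
where the second equality substitutes the first expression back in. Solving the resulting linear equation gives $\mu^N_C=\tfrac23(2p_1-1)\pi_{1,3}(0,1)$, which is strictly positive because the game-$C$ chain has all parameters in $(0,1)$, hence is irreducible (so $\pi_{1,3}(0,1)>0$), and $p_1>1/2$. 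Your heuristic that the coefficients $3/4$ and $1/4$ are ``skewed in favor of $(0,0)$'' cannot be turned into a proof on its own, since $\pi_{1,3}$ is unknown; the self-consistency equation above is exactly what replaces that missing knowledge, and it is the idea absent from your proposal.
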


\begin{proof}
First, we compute $\mu^N_B$ assuming only $p_0=1$, $p_1,p_2\in[0,1]$, $p_3=0$, and $0<p_1+p_2<2$.

If $N$ is even, then the two states $01\cdots01$ and $10\cdots10$, in which 0s and 1s alternate, are absorbing, and all other states are transient (see Lemma~\ref{ergodicity-P}).  From either of these two states there is a win of one unit if the player chosen to play is a winner (probability 1/2) and a loss of one unit if the player chosen to play is a loser (probability 1/2).  Consequently, the i.i.d.\ SLLN applies and $\mu^N_B=0$, regardless of $p_1$ and $p_2$.

On the other hand, if $N$ is odd, then the set of $2N$ states in which 0s and 1s alternate, with the single exception of a pair of adjacent 0s or a pair of adjacent 1s, is closed, irreducible, and aperiodic, and all other states are transient (see Lemma~\ref{ergodicity-P}).  Let us order the states in this set as follows:  First, the states with adjacent 0s are ordered by the position of the first 0 of the adjacent pair in clockwise order (e.g., $001\cdots01$ is first, $010\cdots10$ is $N$th).  Next, the states with adjacent 1s are ordered by the position of the first 1 of the adjacent pair in clockwise order.  With this ordering, the $2N\times2N$ one-step transition matrix obtained by restricting $\bm P$ to this set has the block form
$$
\bar{\bm P} :={1\over N}\left(\begin{array}{cc}(N-p_1-p_2)\bm I_N&\bm P^{p_1,p_2}\\
\bm P^{q_2,q_1}&(N-q_1-q_2)\bm I_N\end{array}\right),
$$
where $\bm P^{p_1,p_2}$ is the $N\times N$ matrix with each entry of the superdiagonal as well as the $(N,1)$ entry equal to $p_1$, and each entry of the subdiagonal as well as the $(1,N)$ entry equal to $p_2$.  By Lemma~\ref{invariance}, the unique stationary distribution $\bar{\bm\pi}$ for this Markov chain has the form
$$
\bar{\bm\pi}=(\alpha_N,\alpha_N,\ldots,\alpha_N,\beta_N,\beta_N,\ldots,\beta_N), 
$$
so $\bar{\bm\pi}=\bar{\bm\pi}\bar{\bm P}$ and $\bar{\bm\pi}\bm1=1$ result in $\alpha_N=(q_1+q_2)/(2N)$ and $\beta_N=(p_1+p_2)/(2N)$.  Finally, by (\ref{mu3}),
$$
\mu^N_B=2\sum_{k=1}^N\bar{\pi}(2k)-1=2\bigg[{N-1\over2}\,\alpha_N+{N+1\over2}\,\beta_N\bigg]-1
={p_1+p_2-1\over N}.
$$

In particular, this proves the assertions about $\mu^N_B$ when $p_1=p_2\in(1/2,1)$.  It remains to show that $\mu^N_C>0$ under this assumption, regardless of the parity of $N$.

Now $\mu^N_C=\mu^N(3/4,(1/2+p_1)/2,(1/2+p_1)/2,1/4)$, which by Corollary~\ref{couple} is 0 at $p_1=1/2$.  If we could show that this function is increasing in $p_1$, the proof would be complete.  However, despite being very plausible, this monotonicity property appears difficult to prove.  We can prove it computationally for small $N$.  For example,
$$
\mu^N_C=\begin{cases}
(2p_1-1)/5&\text{if $N=3$,}\\
7(2p_1-1)/(53-16p_1+16p_1^2)&\text{if $N=4$,}\\
(2p_1-1)(65-14p_1+14p_1^2)/(423-134p_1+134p_1^2)&\text{if $N=5$.}\end{cases}
$$
But a noncomputational proof is needed.  Fortunately, there is an alternative approach that avoids the monotonicity question.

Let $\pi_{1,3}$ be the $1,3$ two-dimensional marginal of $\bm\pi$ when the probability parameters are $3/4$, $(1/2+p_1)/2$, $(1/2+p_1)/2$, and $1/4$.  We apply Theorem~\ref{SLLN-thm} twice.  By (\ref{mu3}) and Corollary~\ref{symm},
\begin{eqnarray}\label{z-x1}
\mu^N_C&=&\pi_{1,3}(1,0)+\pi_{1,3}(1,1)-[\pi_{1,3}(0,0)+\pi_{1,3}(0,1)]\\
&=&\pi_{1,3}(1,1)-\pi_{1,3}(0,0).\nonumber
\end{eqnarray}
By (\ref{mu2}), Corollary~\ref{symm}, and (\ref{z-x1}),
\begin{eqnarray*}
\mu^N_C&=&\pi_{1,3}(0,0)(1/2)+\pi_{1,3}(0,1)(2p_1-1)+\pi_{1,3}(1,1)(-1/2)\\
&=&(2p_1-1)\pi_{1,3}(0,1)-(1/2)\mu^N_C.
\end{eqnarray*}
Therefore, $\mu^N_C=(2/3)(2p_1-1)\pi_{1,3}(0,1)$, and this is positive by the irreducibility of the Markov chain with the stated probability parameters and the assumption that $p_1>1/2$.
\end{proof}

Under the assumptions of Theorem~\ref{p0=1,p3=0-thm}, we know that $n^{-1}S_n\to\mu^N$ a.s.\ (by the i.i.d.\ SLLN in the case of game $B$ and even $N$, by Theorem~\ref{SLLN-thm} otherwise), hence $n^{-1}\E[S_n]\to\mu^N$.  But the rate of convergence here depends on $N$ and of course on the initial distribution.  This is illustrated in Figure~\ref{fig}, in which $N=12$ and 13, $p_0=1$, $p_1=p_2=3/4$, and $p_3=0$.  We assume an initial distribution that is uniform on $\Sigma$ and use $S_0:=0$ and
\begin{equation}\label{cum-mean-profit}
N^{-1}\E[S_{nN}]={1\over N}\sum_{m=1}^{nN}\bm\pi_0\bm P^{m-1}\dot{\bm P}\bm1,\qquad n=1,2,\ldots,100,
\end{equation}
where $\bm\pi_0$ is the $2^N$-dimensional row vector with every entry equal to $2^{-N}$, $\bm P$ is the $2^N\times2^N$ one-step transition matrix from (\ref{x to x^i}) and (\ref{x to x}), $\dot{\bm P}$ is $\bm P$ with each $q_m$ replaced by $-q_m$, and $\bm1$ is the $2^N$-dimensional column vector of 1s.  We evaluate (\ref{cum-mean-profit}) by recursion, not by matrix arithmetic, because the matrices are rather large.
\begin{figure}[ht]
\centering
\includegraphics[width = 4.8in]{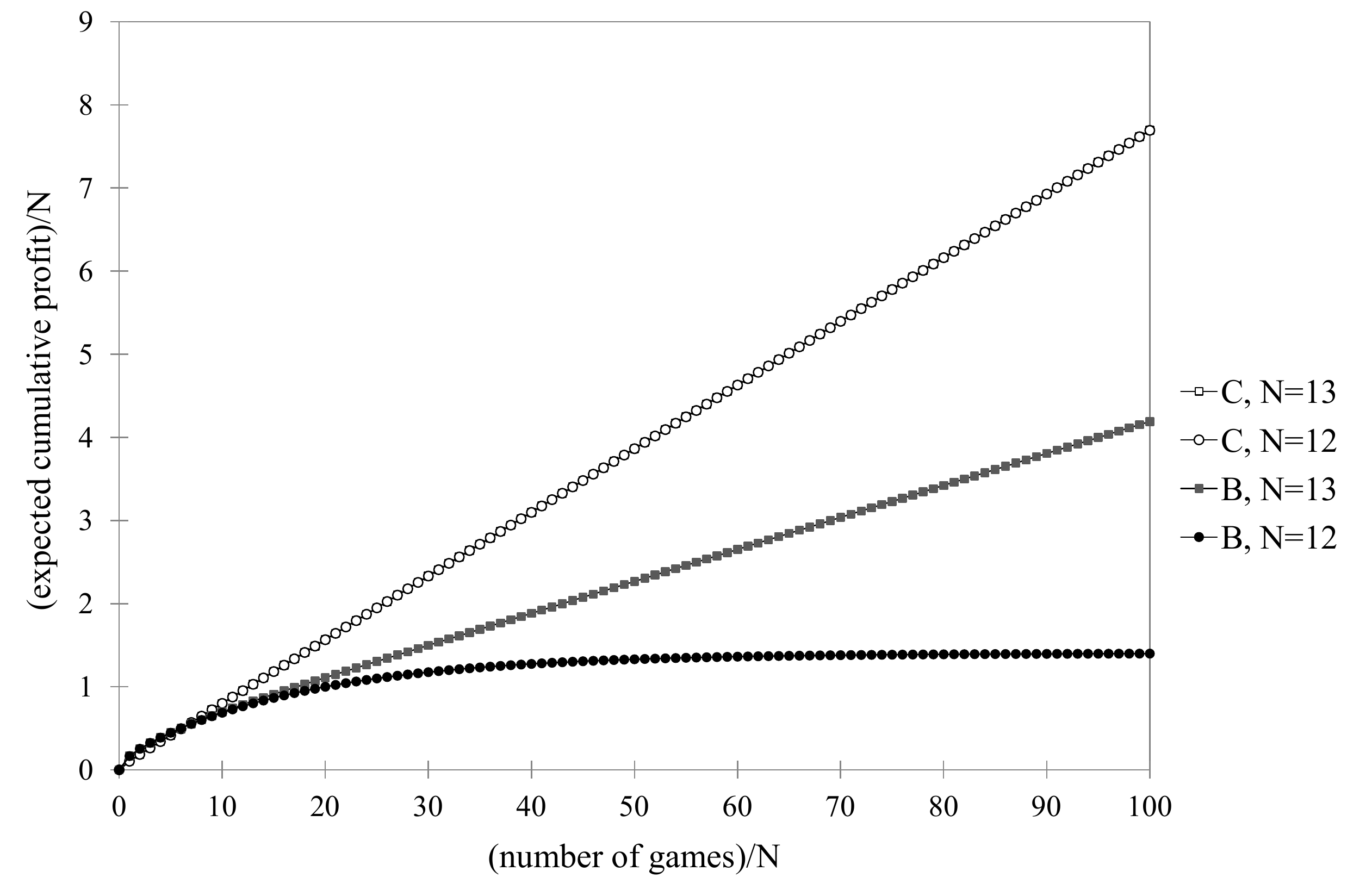}
\caption{\label{fig}Illustration of Theorem~\ref{p0=1,p3=0-thm} in the special cases $N=12$ and $13$ and $p_0=1$, $p_1=p_2=3/4$, and $p_3=0$.  The initial distribution is assumed uniform on $\Sigma$.  The Parrondo effect is present for $N=12$ because $\mu^{12}_B=0$ and $\mu^{12}_C\approx0.0766019$.  It is not present for $N=13$ because $\mu^{13}_B=1/26\approx0.0384615$ and $\mu^{13}_C\approx0.0766021$.  (The open squares are hidden behind the open circles.)}
\end{figure}

\section{SLLN in continuous time}\label{SLLN-cont-time}

The \textit{Parrondo effect} is said to occur if there is a reversal in direction in some system parameter when two similar dynamics are combined.  The term ``combined'' is fairly unambiguous when time is discrete but less so when time is continuous.  The original examples of the paradox all had the following form.  Game $A$ is fair or losing, game $B$ is fair or losing, but the combination of the two games, game $C$, is winning.  Here game $C$ has two possible interpretations.  It could be the random mixture of game $A$ and game $B$, denoted by $C:=\gamma A+(1-\gamma)B$, where $0<\gamma<1$; with probability $\gamma$, game $A$ is played, and with probability $1-\gamma$, game $B$ is played.  Or it could be a nonrandom pattern such as $C:=A^r B^s$; game $A$ is played $r$ times, then game $B$ is played $s$ times.  In these examples, games $A$ and $B$ are controlled by Markov chains in a state space $\Sigma_0$ with one-step transition matrices $\bm P_A$ and $\bm P_B$.  Then game $C$ is controlled by Markov chains in $\Sigma_0$ with one-step transition matrices
\begin{equation}\label{P_C}
\bm P_C:=\gamma\bm P_A+(1-\gamma)\bm P_B\quad\text{and}\quad\bm P_C:=\bm P_A^r\bm P_B^s
\end{equation}
in the random-mixture and nonrandom-pattern cases, respectively.

Now suppose that games $A$ and $B$ are controlled by continuous-time Markov processes in a compact state space $E$ with Feller semigroups $\{T_A(t)\}$ and $\{T_B(t)\}$ on $C(E)$ having generators $\mathscr{L}_A$ and $\mathscr{L}_B$, at least on a domain $D\subset C(E)$.  The analogues of (\ref{P_C}) are
\begin{equation}\label{mixture}
[\gamma T_A(1/n)+(1-\gamma)T_B(1/n)]^{\lfloor nt\rfloor}f\to T_C(t)f
\end{equation}
and, with $\gamma:=r/(r+s)$,
\begin{equation}\label{pattern}
[T_A(\gamma/n)T_B((1-\gamma)/n)]^{\lfloor nt\rfloor}f\to T_C(t)f,
\end{equation}
where $\{T_C(t)\}$ is the Feller semigroup whose generator is assumed to be the closure of $\mathscr{L}_C:=\gamma\mathscr{L}_A+(1-\gamma)\mathscr{L}_B$ acting on $D$.  The limit (\ref{mixture}) is justified by Chernoff's product formula, and the limit (\ref{pattern}) is justified by Trotter's product formula.  So, in continuous time, there is no distinction between the random-mixture and nonrandom-pattern cases, at least in the limit as the time allotted to each game tends to 0.  See Montero (2011) for a different approach to the same issue.

Let us consider whether the SLLN applies to the ensemble's profits in the continuous-time analogue of the $N$-player model in the state space $\Sigma$ or, better yet, $\Sigma^*:=\Sigma\times\{1,2,\ldots,N\}$.  The discrete-time Markov chain in $\Sigma^*$ has one-step transition matrix $\bm P^*$ given by (\ref{P^*part1}) and (\ref{P^*part2}).  We assume that the conditions for ergodicity in Lemma~\ref{ergodicity-P} are satisfied.

The continuous-time analogue of this process has infinitesimal matrix $\bm Q^*:=\bm P^*-\bm I$, and it can be constructed as follows:  Let $\{\bm X^*(n)\}$ denote the discrete-time Markov chain in $\Sigma^*$ and let $\{N(t)\}$ be an independent, rate 1, Poisson process.  Then $\bm Y^*(t):=\bm X^*(N(t))$ defines a continuous-time Markov chain $\{\bm Y^*(t)\}$ in $\Sigma^*$ with infinitesimal matrix $\bm Q^*$.  

There is a technical issue with continuous-time Markov chains, namely that jumps from a state to itself are not recognized as jumps.  In our original model with state space $\Sigma$, when a winner wins or a loser loses, there is a jump from the current state to itself in the discrete-time model and no change of state in the continuous-time model.  By augmenting the state space to $\Sigma^*$, this problem is minimized, but when a winner wins and plays again at the next turn or when a loser loses and plays again at the next turn, we have a jump from a state $(\bm x,i)$ to itself in the discrete-time model and no change of state in the continuous-time model.  These ``fictitious'' jumps are needed to define the cumulative-profit process, in terms of which the Parrondo effect is defined.  By representing the continuous-time Markov chain in terms of a Poisson process as above, these fictitious jumps become recognizable.

To be more explicit, the above representation allows us to think of the continuous-time Markov chain as having jumps from state $(\bm x,i)$ at exponential rate 1 as follows:  If $x_i=0$, the process jumps to state $(\bm x^i,j)$ with probability $N^{-1}p_{m_i(\bm x)}$ and to state $(\bm x,j)$ with probability $N^{-1}q_{m_i(\bm x)}$;  if $x_i=1$, it jumps to state $(\bm x^i,j)$ with probability $N^{-1}q_{m_i(\bm x)}$ and to state $(\bm x,j)$ with probability $N^{-1}p_{m_i(\bm x)}$.  Fictitious jumps from state $(\bm x,i)$ to itself occur with probability $N^{-1}q_{m_i(\bm x)}$ if $x_i=0$ and with probability $N^{-1}p_{m_i(\bm x)}$ if $x_i=1$.  

Let $S_n$ be the cumulative profit to the ensemble of $N$ players after $n$ steps of the discrete-time model.  Theorem~\ref{SLLN-thm} tells us that $n^{-1}S_n\to\mu^N$ a.s.  Then $S_{N(t)}$ is the cumulative profit at time $t$ to the ensemble of $N$ players in the continuous-time model, and we have
$$
{S_{N(t)}\over t}={S_{N(t)}\over N(t)}\;{N(t)\over t}\to\mu^N\;\;\text{a.s.}
$$
as $t\to\infty$, where $\mu^N$ is the mean profit per turn in the discrete-time model.  Let us summarize this result as follows.

\begin{theorem}
Assume that the conditions for ergodicity in Lemma~\ref{ergodicity-P} are satisfied.  Let $\hat S_t$ be the cumulative profit at time $t$ to the ensemble of $N$ players, assuming the continuous-time Markov chain model in $\Sigma^*$ with infinitesimal matrix $\bm Q^*:=\bm P^*-\bm I$, where $\bm P^*$ is the one-step transition matrix given by (\ref{P^*part1}) and (\ref{P^*part2}).  Then $\lim_{t\to\infty}t^{-1}\hat S_t=\mu^N$ {\rm a.s.}, where $\mu^N$ is as in Theorem~\ref{SLLN-thm}.
\end{theorem}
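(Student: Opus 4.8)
The plan is to reduce the continuous-time assertion to the discrete-time SLLN already proved in Theorem~\ref{SLLN-thm}, using the Poisson subordination representation constructed just above the statement. First I would record that, under this construction, $\hat S_t = S_{N(t)}$, where $\{S_n\}$ is the discrete-time cumulative-profit sequence of Theorem~\ref{SLLN-thm} and $\{N(t)\}$ is the independent rate-$1$ Poisson process. The role of the fictitious jumps, as explained above, is precisely to make $S_{N(t)}$ agree with the cumulative profit of the continuous-time chain $\{\bm Y^*(t)\}$ at time $t$, so that no payoff is lost in the transition to continuous time. The ergodicity hypothesis of Lemma~\ref{ergodicity-P} is exactly what is needed to invoke Theorem~\ref{SLLN-thm}.

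Next I would write the elementary identity
\[
\frac{\hat S_t}{t} = \frac{S_{N(t)}}{N(t)} \cdot \frac{N(t)}{t},
\]
valid on the event $\{N(t)\ge 1\}$, whose probability tends to $1$, and treat the two factors separately. For the second factor, the strong law of large numbers for the Poisson process gives $N(t)/t \to 1$ a.s. as $t\to\infty$. For the first factor, Theorem~\ref{SLLN-thm} provides $n^{-1}S_n \to \mu^N$ a.s.; since $N(t)\to\infty$ a.s., the first factor should converge to $\mu^N$ along the random index $N(t)$, and combining the two limits yields $\hat S_t/t \to \mu^N\cdot 1 = \mu^N$ a.s.

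The one point requiring care is the passage from $n^{-1}S_n\to\mu^N$ a.s. to $S_{N(t)}/N(t)\to\mu^N$ a.s. along the random, time-dependent index $N(t)$. The clean way to handle this is to exploit the independence of $\{\bm X^*(n)\}$ and $\{N(t)\}$: the discrete SLLN holds on an event $\Omega_0$ of full probability on which $n^{-1}S_n\to\mu^N$, and on $\Omega_0$ convergence holds along \emph{every} deterministic subsequence tending to infinity. Working on the product space and applying Fubini, together with the fact that $N(t)\to\infty$ for almost every realization of the Poisson process, gives $S_{N(t)}/N(t)\to\mu^N$ for almost every pair of realizations. I do not anticipate a genuine obstacle here; the argument is essentially the one sketched in the paragraph preceding the statement, and the only thing left to spell out is this independence/Fubini justification for convergence along the random index.
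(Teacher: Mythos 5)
Your proposal is correct and follows essentially the same route as the paper: represent the continuous-time chain by Poisson subordination so that $\hat S_t = S_{N(t)}$, factor $\hat S_t/t = (S_{N(t)}/N(t))(N(t)/t)$, and combine the discrete-time SLLN of Theorem~\ref{SLLN-thm} with $N(t)/t \to 1$ a.s. The independence/Fubini justification for convergence along the random index $N(t)$, which you spell out, is exactly the step the paper leaves implicit.
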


Implicitly, we have been discussing game $B$.  Game $A$ is the special case $p_0=p_1=p_2=p_3=1/2$.  By the discussion at the beginning of this section with $\gamma=1/2$, game $C$ is simply game $B$ with $p_m$ replaced by $(1/2+p_m)/2$ for $m=0,1,2,3$.  Thus, the Parrondo effect appears in continuous time if and only if it appears in discrete time.

\section{A spin system}\label{spin}

We want to show that our discrete-time Markov chain converges in distribution, after rescaling its time parameter, to an interacting particle system, or more specifically a \textit{spin system} on the one-dimensional integer lattice ${\bf Z}$.  Let us begin by characterizing the limiting process in terms of its generator.  Its state space is the product space
\begin{equation*}
\{0,1\}^{\bf Z}:=\{\bm x=(\ldots,x_{-2},x_{-1},x_0,x_1,x_2,\ldots): x_i\in\{0,1\}{\rm\ for\ all\ }i\in{\bf Z}\}.
\end{equation*}
We will usually refer to $x_i$ as the status (loser or winner, 0 or 1) of player $i$; occasionally, it will be convenient to refer to it as the spin at site $i$.  Let $m_i(\bm x):=2x_{i-1}+x_{i+1}$ as before but without the periodic boundary conditions.  Also, let $\bm x^i$ be the element of $\{0,1\}^{\bf Z}$ equal to $\bm x$ except at the $i$th component.  For example, $\bm x^0:=(\ldots,x_{-2},x_{-1},1-x_0,x_1,x_2,\ldots)$.

The generator depends on the four probability parameters $p_0,p_1,p_2,p_3\in[0,1]$, and it has the form
\begin{eqnarray*}
(\mathscr{L}f)(\bm x)&:=&\sum_{i\in{\bf Z}:x_i=0}p_{m_i(\bm x)}[f(\bm x^i)-f(\bm x)]+\sum_{i\in{\bf Z}:x_i=1}q_{m_i(\bm x)}[f(\bm x^i)-f(\bm x)]\\
&\;=&\sum_{i\in{\bf Z}}c_i(\bm x)[f(\bm x^i)-f(\bm x)]
\end{eqnarray*}
for functions $f$ depending on only finitely many components, where the \textit{flip rates} are given by
\begin{equation}\label{rates}
c_i(\bm x):=\begin{cases}p_{m_i(\bm x)}&\text{if $x_i=0$,}\\q_{m_i(\bm x)}&\text{if $x_i=1$,}
\end{cases}
\end{equation}
and $q_m:=1-p_m$ for $m=0,1,2,3$.  This is of the form (III.0.5) of Liggett (1985).  The sufficient condition (III.0.3) of Liggett for the characterization of the process is, in our notation,
$$
\sup_{i\in{\bf Z}}\sum_{j\in{\bf Z}}\sup_{\bm x\in\{0,1\}^{\bf Z}}|c_i(\bm x)-c_i(\bm x^j)|<\infty,
$$
which is trivially satisfied because the summands are 0 unless $|i-j|\le1$, hence the expression on the left is at most 3.  It can be shown that the functions depending on only finitely many components form a core for the generator of the Feller semigroup associated with the process.

Next we would like to justify the claim that this spin system is the limit in distribution of the $N$-player model after an appropriate time change.  First, it is convenient to relabel the $N$ players.  Instead of labeling them from 1 to $N$, we label them from $-(N-1)/2$ to $(N-1)/2$ if $N$ is odd, and from $-N/2$ to $N/2-1$ if $N$ is even.  In general, we label the players from $l_N$ to $r_N$, where
$$
l_N:=\begin{cases}-(N-1)/2&\text{if $N$ is odd,}\\
-N/2&\text{if $N$ is even,}
\end{cases}\;\;\text{and}\;\;
r_N:=\begin{cases}(N-1)/2&\text{if $N$ is odd,}\\
N/2-1&\text{if $N$ is even,}\\
\end{cases}
$$
with the understanding that players $l_N$ and $r_N$ are nearest neighbors.  The state space is
\begin{equation*}
\Sigma_N:=\{\bm x=(x_{l_N},\ldots,x_{-1},x_0,x_1,\ldots,x_{r_N}): x_i\in\{0,1\}{\rm\ for\ }i=l_N,\ldots,r_N\}.
\end{equation*}
(This is what we previously called $\Sigma$ but with the players relabeled.  To avoid confusion, we make the dependence on $N$ explicit in the notation.)
We also speed up time in the $N$-player model so that $N$ one-step transitions occur per unit of time.  The resulting discrete generator has the form
\begin{eqnarray*}
(\mathscr{L}_Nf)(\bm x)&:=&N\E[f(\bm X_N(1))-f(\bm x)\mid \bm X_N(0)=\bm x]\\
&\;=&N\bigg\{\sum_{l_N\le i\le r_N:x_i=0}N^{-1}p_{m_i(\bm x)}[f(\bm x^i)-f(\bm x)]\\
&&\qquad\qquad{}+\sum_{l_N\le i\le r_N:x_i=1}N^{-1}q_{m_i(\bm x)}[f(\bm x^i)-f(\bm x)]\bigg\},
\end{eqnarray*}
where $x_{l_N-1}:=x_{r_N}$ and $x_{r_N+1}:=x_{l_N}$.
Consequently, if we define $\eta_N:B(\{0,1\}^{\bf Z})\mapsto B(\Sigma_N)$ by 
$$
(\eta_N f)(x_{l_N},\ldots,x_{r_N}):=f(\ldots,0,0,x_{l_N},\ldots,x_{r_N},0,0,\ldots),
$$
then $(\mathscr{L}_N\eta_Nf)(\bm x)=\eta_N(\mathscr{L}f)(\bm x)$ for all $\bm x\in\Sigma_N$ and $N\ge2K+4$, where $f(\bm x)$ depends on $\bm x$ only through the $2K+1$ components $x_i$, $-K\le i\le K$.

This implies that the process $\{\bm X_N(\lfloor Nt\rfloor)\}$ converges in distribution to the spin system $\{\bm X(t)\}$ (e.g., Theorems 1.6.5 and 4.2.6 of Ethier and Kurtz 1986).  More importantly, it shows that, if the spin system has a unique stationary distribution, then the unique stationary distribution of the $N$-player Markov chain (assumed ergodic in the sense of Lemma~\ref{ergodicity-P}), converges to it in the topology of weak convergence (essentially Proposition I.2.14 of Liggett 1985).  Let us assume that the spin system has a unique stationary distribution $\pi$, and let us denote the unique stationary distribution of the $N$-player Markov chain by $\pi^N$.  (We previously denoted the latter by $\bm\pi$ but now it is necessary to make the dependence on $N$ explicit.  We do not use boldface for $\pi^N$ or $\pi$ because it is no longer useful or possible, respectively, to think of them as row vectors.)  Let us denote their $-1,1$ two-dimensional marginals by $\pi_{-1,1}^N$ and $\pi_{-1,1}$.  Then we have
\begin{eqnarray}\label{limit}
&&\pi^N_{-1,1}(0,0)p_0+\pi^N_{-1,1}(0,1)p_1+\pi^N_{-1,1}(1,0)p_2+\pi^N_{-1,1}(1,1)p_3\\
&&\qquad{}\to\pi_{-1,1}(0,0)p_0+\pi_{-1,1}(0,1)p_1+\pi_{-1,1}(1,0)p_2+\pi_{-1,1}(1,1)p_3,\nonumber
\end{eqnarray}
hence $\mu^N$, the mean parameter in Theorem~\ref{SLLN-thm}, converges as $N\to\infty$ to a limit that can be expressed in terms of the spin system.  It remains to show that this limit can be interpreted as a mean profit for the spin system; we return to this point in Section \ref{SLLN-spin}.

Under what conditions does the spin system have a unique stationary distribution (also referred to as a unique invariant probability measure)?  We will give sufficient conditions for the spin system to be \textit{ergodic}, which means not only that there is a unique stationary distribution $\pi$ but that the process at time $t$ converges in distribution to $\pi$ as $t\to\infty$, regardless of the initial distribution.

\begin{theorem}\label{ergodicity}
With $p_0,p_1,p_2,p_3\in[0,1]$, the spin system on ${\bf Z}$ with flip rates (\ref{rates}) is ergodic if at least one of the following four conditions is satisfied:

\emph{(a)} (basic estimate applies)
\begin{equation}\label{cond-a}
\max(|p_0-p_1|,|p_2-p_3|)+\max(|p_0-p_2|,|p_1-p_3|)<1;
\end{equation}

\emph{(b)} (attractiveness or repulsiveness applies)
\begin{equation}\label{cond-b}
0<\min(p_0,p_3)\le \min(p_1,p_2)\le\max(p_1,p_2)\le \max(p_0,p_3)<1;
\end{equation}

\emph{(c)} (coalescing duality applies)
\begin{equation}\label{cond-c}
\;\;\max(p_1,p_2,p_3,p_1+p_2-p_3)-p_3<p_0/2<\min(p_1,p_2,p_3,p_1+p_2-p_3);
\end{equation}

\emph{(d)} (annihilating duality applies) 
\begin{equation}\label{cond-d}
p_0,p_1,p_2,p_3\in(2\overline{p}-1,2\overline{p})\cap(0,1),\quad \overline{p}:=(p_0+p_1+p_2+p_3)/4.
\end{equation}
\end{theorem}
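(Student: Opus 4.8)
The plan is to treat the four conditions separately, matching each to a classical ergodicity criterion for spin systems after an elementary computation with the rates $p_0,p_1,p_2,p_3$. For (a) I would apply the basic estimate (cf.\ Liggett 1985, Theorem I.4.1), for which one needs $M<\varepsilon$, where $M:=\sup_i\sum_{j\neq i}\sup_{\bm x}|c_i(\bm x)-c_i(\bm x^j)|$ and $\varepsilon:=\inf_i\inf_{\bm x}[c_i(\bm x)+c_i(\bm x^i)]$. Since the flip at $i$ depends only on $x_{i-1},x_i,x_{i+1}$, the only nonzero influences are $\gamma(i,i-1):=\sup_{\bm x}|c_i(\bm x)-c_i(\bm x^{i-1})|=\max(|p_0-p_2|,|p_1-p_3|)$ (flipping the left neighbor changes $m_i$ by $2$) and $\gamma(i,i+1):=\sup_{\bm x}|c_i(\bm x)-c_i(\bm x^{i+1})|=\max(|p_0-p_1|,|p_2-p_3|)$ (flipping the right neighbor changes $m_i$ by $1$), so $M=\gamma(i,i-1)+\gamma(i,i+1)$ is exactly the left-hand side of (\ref{cond-a}). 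Moreover $c_i(\bm x)+c_i(\bm x^i)=p_{m_i(\bm x)}+q_{m_i(\bm x)}=1$ for every $\bm x$, since flipping $x_i$ leaves $m_i$ unchanged, so $\varepsilon=1$. Hence (\ref{cond-a}) is precisely $M<\varepsilon$, and the theorem yields ergodicity with exponential convergence.

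For (b) I would first note that (\ref{cond-b}) forces every $p_m\in(0,1)$, so all flip rates lie in $(0,1)$ and are strictly positive. When $p_0\le p_3$, (\ref{cond-b}) reads $p_0\le\min(p_1,p_2)\le\max(p_1,p_2)\le p_3$, i.e.\ $p_m$ is nondecreasing along the neighbor partial order, so the system is \emph{attractive}; when $p_0\ge p_3$ it is \emph{repulsive}, and I would conjugate by the staggered involution $(T\bm x)_i:=x_i$ for $i$ even and $1-x_i$ for $i$ odd, a homeomorphism that turns the repulsive system into an attractive nearest-neighbor system with the same positivity and the same ergodic behavior. In either case one invokes the ergodicity of one-dimensional attractive nearest-neighbor spin systems with strictly positive rates: the monotone (basic) coupling produces upper and lower invariant measures $\overline\nu\ge\underline\nu$, and strict positivity of the rates forces the discrepancies to die out, so $\overline\nu=\underline\nu$ and the system is ergodic.

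For (c) and (d) I would exploit the resampling structure implied by $c_i(\bm x)+c_i(\bm x^i)=1$: site $i$ may be viewed as refreshed at rate $1$, taking the value $1$ with the multilinear probability $p_{m_i(\bm x)}=p_0+(p_2-p_0)x_{i-1}+(p_1-p_0)x_{i+1}+(p_0-p_1-p_2+p_3)x_{i-1}x_{i+1}$. Writing this refresh as a mixture of elementary flip rules — voter-type ``copy a neighbor'' moves together with spontaneous births and deaths and a cooperative term for (c), and exclusive-or moves for (d) — yields an additive (coalescing-dual) representation in case (c) and a cancellative (annihilating-dual) representation in case (d). The content of (\ref{cond-c}) and (\ref{cond-d}) is exactly that all coefficients in the respective representation are nonnegative, so that the graphical construction and the duality are legitimate. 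Granting the representation, ergodicity follows from the dual: started from a finite set, the dual coalescing (resp.\ annihilating) random walks on $\bf Z$ are built from recurrent walks and so collapse to a single site or to the empty set, whence $\E[H(\bm X(t),A)]$ converges to a limit independent of the initial law and $\pi$ is unique.

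The main obstacle is (c)--(d): one must identify the correct additive and cancellative decompositions of the refresh probability and verify, by a careful but routine sign analysis, that nonnegativity of their coefficients is equivalent to the stated inequalities — the appearance of $p_1+p_2-p_3$ and of $p_0/2$ in (\ref{cond-c}), and of the window $(2\overline p-1,2\overline p)$ in (\ref{cond-d}), should emerge from this bookkeeping — and then confirm that the one-dimensional dual walks are recurrent enough to coalesce or annihilate down to a trivial configuration. A secondary subtlety is the attractive case (b), where the passage from $\overline\nu\ge\underline\nu$ to $\overline\nu=\underline\nu$ rests on the positive-rates phenomenon rather than on any contraction estimate, and hence cannot be reduced to the estimate used in (a).
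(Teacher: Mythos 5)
Your parts (a) and (b) are correct and essentially the paper's argument. Part (a) is exactly the basic estimate the paper invokes (Liggett's condition (III.0.6)), with the same computation of the two influence coefficients and of $c_i(\bm x)+c_i(\bm x^i)=1$. Part (b) is handled in the paper by citing Gray (1982), whose theorem as quoted covers attractive \emph{and} repulsive translation-invariant nearest-neighbor systems with strictly positive rates, so your staggered conjugation is unnecessary; note also that the result you invoke for the attractive case \emph{is} Gray's theorem (the passage from positive rates to $\overline\nu=\underline\nu$ is the hard content, not a routine coupling argument), and your conjugated system is only $2$-periodic rather than translation invariant, so quoting a translation-invariant statement for it would itself need justification.

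The genuine gap is in (c) and (d). You assert that (\ref{cond-c}) and (\ref{cond-d}) should express \emph{nonnegativity} of the coefficients in an additive (resp.\ cancellative) decomposition of the resampling rule, and that ergodicity then follows because the dual coalescing (resp.\ annihilating) walks on ${\bf Z}$ are recurrent and so collapse to a single site or to the empty set. Both halves fail. First, collapse of the dual does not give ergodicity: the one-dimensional voter model ($p_0=0$, $p_1=p_2=1/2$, $p_3=1$) has a nonnegative coalescing-dual representation (refresh by copying a uniformly chosen neighbor) whose recurrent dual walks do coalesce, yet it is not ergodic --- its invariant measures are the mixtures $\lambda\delta_{\bm 0}+(1-\lambda)\delta_{\bm 1}$, and the dual expectation retains dependence on the initial law through the one-site marginal. (Consistently, the voter model violates the strict inequalities in (\ref{cond-c}).) What is needed is that the dual \emph{dies out}, and that is what the paper's mechanism delivers: writing the rates in Liggett's dual form leaves five free parameters $z_\emp,z_{-1},z_0,z_1,z_{-1,1}$ solving the underdetermined linear systems (\ref{coalescing}) or (\ref{annihilating}), and Theorem III.5.1 of Liggett yields ergodicity provided the subcriticality bound (\ref{abs-sum}), namely $|z_\emp|+|z_{-1}|+|z_0|+|z_1|+|z_{-1,1}|<1$, holds for some solution. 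Second, because of this, nonnegativity is not the right bookkeeping at all: Liggett's framework expressly allows negative $z_A$, interpreted as dual transitions into sets containing the absorbing point $\infty$ (i.e.\ $z_A=-p(0,A\cup\{\infty\})$ when $z_A<0$), and conditions (\ref{cond-c}) and (\ref{cond-d}) arise by optimizing the $\ell^1$ bound over the free parameter and over the $8$ (coalescing) or $16$ (annihilating) sign patterns, then intersecting the resulting regions; the degenerate cases $p_3=1$, $p_0+p_1+p_2+p_3=2$, and $p_0+p_1=1$ must then be treated by separate solutions. None of this is recoverable from a sign analysis of a mixture representation, so parts (c) and (d) of your proposal would not go through as sketched.
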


\begin{remark}
($i$) Absorbing states are compatible with ergodicity as long as there is only one of them.  In cases of two or more absorbing states, none of the conditions (a), (b), (c), or (d) is satisfied.  There are six such cases: If $(p_0,p_3)=(0,1)$, then $\bm0$ and $\bm1$ are absorbing; if $(p_0,p_3)=(1,0)$, then the two states with alternating $0$s and $1$s are absorbing; if $(p_0,p_1,p_2)=(1,0,0)$, then the three states of the form $(\cdots001001001\cdots)$ are absorbing; if $(p_1,p_2,p_3)=(1,1,0)$, then the three states of the form $(\cdots011011011\cdots)$ are absorbing; if $(p_0,p_1,p_2,p_3)=(1,0,0,0)$, then states with no adjacent $1$s and no more than two consecutive $0$s are absorbing; if $(p_0,p_1,p_2,p_3)=(1,1,1,0)$, then states with no adjacent $0$s and no more than two consecutive $1$s are absorbing.  (In the last two cases, the set of absorbing states is uncountable.)

($ii$) None of the four conditions on $(p_0,p_1,p_2,p_3)$ is implied by the other three.  For example, $(1/4,1/4,1/4,0)$ satisfies (a) only, $(1/4,1/4,1/4,3/4)$ satisfies (b) only, $(1/4,3/4,3/4,1)$ satisfies (c) only, and $(1/4,3/4,3/4,1/4)$ satisfies (d) only.

($iii$)  Under the assumption that $p_1=p_2$, the (three-dimensional) volumes of the regions described by (a), (b), (c), and (d) are respectively 7/12, 1/3, 7/32, and 2/3.  The volume of the union of the four regions is 3323/4032, representing about 82.4\% of the parameter space.  (See the remark following the statement of Corollary~\ref{couple-spin} for a slight improvement.)  Without the assumption that $p_1=p_2$, the (four-dimensional) volumes of the regions described by (a), (b), (c), and (d) are respectively 7/12, 1/6, 65/384, and 2/3.  The volume of the union of the four regions could not be evaluated, but can be estimated via simulation to represent about 78.9\% of the parameter space.
\end{remark}

\begin{proof}
(a) uses condition (III.0.6) of Liggett (1985).  In our notation, that condition is
$$
\sup_{i\in{\bf Z}}\sum_{j\in{\bf Z}:j\ne i}\sup_{\bm x\in\{0,1\}^{\bf Z}}|c_i(\bm x)-c_i(\bm x^j)|<\inf_{i\in{\bf Z},\bm x\in\{0,1\}^{\bf Z}}[c_i(\bm x)+c_i(\bm x^i)].
$$
The right side of this inequality is $\min_{0\le m\le3}(p_m+q_m)=1$.  So the inequality is equivalent to (a).

(b) uses a result of Gray (1982): A translation-invariant nearest-neighbor spin system on $\bf Z$ is ergodic if it is attractive or repulsive and if it has (strictly) positive flip rates.  The nearest-neighbor property requires that, for each $i\in{\bf Z}$, the flip rate at site $i$ depend only on the spins at sites $i-1$, $i$, and $i+1$.  Translation invariance (defined precisely below) strengthens this by requiring that, for each $i\in{\bf Z}$, the flip rate at site $i$ depend only on the spins at sites $i-1$, $i$, and $i+1$ \textit{in a way that does not depend on} $i$.  Attractiveness (resp., repulsiveness) is the requirement that whenever $\bm x\le\bm y$ componentwise and $i\in{\bf Z}$,
\begin{eqnarray*}
c_i(\bm x)&\le&(\text{resp.,}\ge)\; c_i(\bm y)\;\;\text{if $x_i=y_i=0$,}\\
c_i(\bm x)&\ge&(\text{resp.,}\le)\; c_i(\bm y)\;\;\text{if $x_i=y_i=1$.}
\end{eqnarray*}
In our case, attractiveness reduces to $p_0\le \min(p_1,p_2)\le\max(p_1,p_2)\le p_3$, and repulsiveness reduces to $p_3\le \min(p_1,p_2)\le\max(p_1,p_2)\le p_0$, hence (b) suffices, since the outer inequalities ensure positive flip rates.

(c) and (d) use coalescing and annihilating duality, respectively, as described in Section~III.4 of Liggett (1985).
Since our flip rates are translation invariant and nearest neighbor, there are nine parameters necessary to specify (III.4.3) and (III.4.4) of Liggett, namely (using his notation) $c(0)=c(x)$ and $p(0,A)=p(x,x+A)$ as $A$ ranges over the eight subsets of $\{-1,0,1\}$ and furthermore each such $A$ may be augmented by including $\infty$.  The basic requirement of our spin system is that $c_i(\bm x)+c_i(\bm x^i)=1$ for all $\bm x\in\{0,1\}^{\bf Z}$ and $i\in{\bf Z}$, and this implies that three of the eight probabilities are 0 (namely, the ones corresponding to $A\cap{\bf Z}=\{-1,0\}$, $A\cap{\bf Z}=\{0,1\}$, and $A\cap{\bf Z}=\{-1,0,1\}$) and $c(0)$ is determined.  This leaves five remaining parameters, which we will denote by $z_\emp$, $z_{-1}$, $z_0$, $z_1$, and $z_{-1,1}$, the interpretation being that $z_A=p(0,A)$ if $z_A\ge0$ and $z_A=-p(0,A\cup\{\infty\})$ if $z_A<0$.  They must satisfy
\begin{equation}\label{coalescing}
\begin{split}
(1-z_0)^{-1}[1-(z_\emp+z_{-1}+z_0+z_1+z_{-1,1})]&=p_0\\
(1-z_0)^{-1}[1-(z_\emp+z_{-1}+z_0)]&=p_1\\
(1-z_0)^{-1}[1-(z_\emp+z_0+z_1)]&=p_2\\
(1-z_0)^{-1}[1-(z_\emp+z_0)]&=p_3
\end{split}
\end{equation}
in the case of coalescing duality, and 
\begin{equation}\label{annihilating}
\begin{split}
2^{-1}(1-z_0)^{-1}(1+z_\emp-z_{-1}-z_0-z_1+z_{-1,1})&=p_0\\
2^{-1}(1-z_0)^{-1}(1+z_\emp-z_{-1}-z_0+z_1-z_{-1,1})&=p_1\\
2^{-1}(1-z_0)^{-1}(1+z_\emp+z_{-1}-z_0-z_1-z_{-1,1})&=p_2\\
2^{-1}(1-z_0)^{-1}(1+z_\emp+z_{-1}-z_0+z_1+z_{-1,1})&=p_3
\end{split}
\end{equation}
in the case of annihilating duality.  In either case, if there is a solution with 
\begin{equation}\label{abs-sum}
|z_\emp|+|z_{-1}|+|z_0|+|z_1|+|z_{-1,1}|<1,
\end{equation}
then Theorem~III.5.1 of Liggett (1985) ensures the ergodicity of the spin system.

The linear systems are underdetermined, and a solution to (\ref{coalescing}) is given by
\begin{eqnarray*}
z_{-1}&=&{(p_3-p_1)z_\emp\over1-p_3},\quad
z_0=1-{z_\emp\over1-p_3},\\ 
z_1&=&{(p_3-p_2)z_\emp\over1-p_3},\quad
z_{-1,1}={[p_1+p_2-(p_0+p_3)]z_\emp\over1-p_3},
\end{eqnarray*}
provided $p_3<1$.  We must have $z_\emp>0$ to ensure $|z_0|<1$.  We can also require, without loss of generality, that $z_\emp\le1-p_3$, so that $z_0\ge0$.  Then $z_{-1}$, $z_1$, and $z_{-1,1}$ can have any combination of signs, so to evaluate the sum in (\ref{abs-sum}) we must consider eight cases.
Similarly, a solution to (\ref{annihilating}) is given by
\begin{eqnarray*}
z_{-1}&=&{[p_2+p_3-(p_0+p_1)]z_\emp\over p_0+p_1+p_2+p_3-2},\quad
z_0=1-{2z_\emp\over p_0+p_1+p_2+p_3-2},\\
z_1&=&{[p_1+p_3-(p_0+p_2)]z_\emp\over p_0+p_1+p_2+p_3-2},\quad
z_{-1,1}={[p_0+p_3-(p_1+p_2)]z_\emp\over p_0+p_1+p_2+p_3-2},
\end{eqnarray*}
provided $p_0+p_1+p_2+p_3\ne2$.  We must give $z_\emp$ the same sign as $p_0+p_1+p_2+p_3-2$ to ensure $|z_0|<1$.  We can also require, without loss of generality, that $z_\emp/(p_0+p_1+p_2+p_3-2)\le1/2$, so that $z_0\ge0$.  Then $z_\emp$, $z_{-1}$, $z_1$, and $z_{-1,1}$ can have any combination of signs, so to evaluate the sum in (\ref{abs-sum}) we must consider 16 cases.

We can now state the results.  In the case of coalescing duality, let 
$$
C_{-1}:=\{p_3\ge p_1\},\quad C_1:=\{p_3\ge p_2\},\quad C_{-1,1}:=\{p_1+p_2\ge p_0+p_3\},
$$ 
and define $C_{-1}^-$, $C_1^-$, and $C_{-1,1}^-$ to be the same sets with inequalities reversed ($\ge$ becomes $\le$). 
Then a sufficient condition for ergodicity is that $(p_0,p_1,p_2,p_3)$ belong to
\begin{eqnarray}\label{coal-result}
C&:=&[C_{-1}\cap C_1\cap C_{-1,1}\cap\{p_0>0\}]\\
&&\quad{}\cup[C_{-1}\cap C_1\cap C_{-1,1}^-\cap\{p_1+p_2>p_0/2+p_3\}]\nonumber\\
&&\quad{}\cup[C_{-1}\cap C_1^-\cap C_{-1,1}\cap\{p_0/2+p_3>p_2\}]\nonumber\\
&&\quad{}\cup[C_{-1}\cap C_1^-\cap C_{-1,1}^-\cap\{p_1>p_0/2\}]\nonumber\\
&&\quad{}\cup[C_{-1}^-\cap C_1\cap C_{-1,1}\cap\{p_0/2+p_3>p_1\}]\nonumber\\
&&\quad{}\cup[C_{-1}^-\cap C_1\cap C_{-1,1}^-\cap\{p_2>p_0/2\}]\nonumber\\
&&\quad{}\cup[C_{-1}^-\cap C_1^-\cap C_{-1,1}\cap\{p_0/2+2p_3>p_1+p_2\}]\nonumber\\
&&\quad{}\cup[C_{-1}^-\cap C_1^-\cap C_{-1,1}^-\cap\{p_3>p_0/2\}].\nonumber
\end{eqnarray}
From this it is straightforward to show that $C$ is the intersection of the eight sets that appear in braces in (\ref{coal-result}).  It follows that $C$ can be expressed as in (\ref{cond-c}).

In the case of annihilating duality, let
\begin{eqnarray*}
D_\emp&:=&\{p_0+p_1+p_2+p_3>2\},\quad D_{-1}:=\{p_2+p_3\ge p_0+p_1\},\\
D_1&:=&\{p_1+p_3\ge p_0+p_2\},\qquad\, D_{-1,1}:=\{p_0+p_3\ge p_1+p_2\},
\end{eqnarray*}
and define $D_\emp^-$, $D_{-1}^-$, $D_1^-$, and $D_{-1,1}^-$ to be the same sets with inequalities reversed ($>$ and $\ge$ become $<$ and $\le$, respectively).
Then a sufficient condition for ergodicity is that $(p_0,p_1,p_2,p_3)$ belong to
\begin{eqnarray}\label{anni-result}
D&:=&[D_\emp\cap D_{-1}\cap D_1\cap D_{-1,1}\cap\{p_3<1\}]\\
&&\quad{}\cup[D_\emp\cap D_{-1}\cap D_1\cap D_{-1,1}^-\cap\{2+p_0>p_1+p_2+p_3\}]\nonumber\\
&&\quad{}\cup[D_\emp\cap D_{-1}\cap D_1^-\cap D_{-1,1}\cap\{2+p_1>p_0+p_2+p_3\}]\nonumber\\
&&\quad{}\cup[D_\emp\cap D_{-1}\cap D_1^-\cap D_{-1,1}^-\cap\{p_2<1\}]\nonumber\\
&&\quad{}\cup[D_\emp\cap D_{-1}^-\cap D_1\cap D_{-1,1}\cap\{2+p_2>p_0+p_1+p_3\}]\nonumber\\
&&\quad{}\cup[D_\emp\cap D_{-1}^-\cap D_1\cap D_{-1,1}^-\cap\{p_1<1\}]\nonumber\\
&&\quad{}\cup[D_\emp\cap D_{-1}^-\cap D_1^-\cap D_{-1,1}\cap\{p_0<1\}]\nonumber\\
&&\quad{}\cup[D_\emp\cap D_{-1}^-\cap D_1^-\cap D_{-1,1}^-\cap\{2+p_3>p_0+p_1+p_2\}]\nonumber\\
&&\quad{}\cup[D_\emp^-\cap D_{-1}\cap D_1\cap D_{-1,1}\cap\{p_0+p_1+p_2>p_3\}]\nonumber\\
&&\quad{}\cup[D_\emp^-\cap D_{-1}\cap D_1\cap D_{-1,1}^-\cap\{p_0>0\}]\nonumber\\
&&\quad{}\cup[D_\emp^-\cap D_{-1}\cap D_1^-\cap D_{-1,1}\cap\{p_1>0\}]\nonumber\\
&&\quad{}\cup[D_\emp^-\cap D_{-1}\cap D_1^-\cap D_{-1,1}^-\cap\{p_0+p_1+p_3>p_2\}]\nonumber\\
&&\quad{}\cup[D_\emp^-\cap D_{-1}^-\cap D_1\cap D_{-1,1}\cap\{p_2>0\}]\nonumber\\
&&\quad{}\cup[D_\emp^-\cap D_{-1}^-\cap D_1\cap D_{-1,1}^-\cap\{p_0+p_2+p_3>p_1\}]\nonumber\\
&&\quad{}\cup[D_\emp^-\cap D_{-1}^-\cap D_1^-\cap D_{-1,1}\cap\{p_1+p_2+p_3>p_0\}]\nonumber\\
&&\quad{}\cup[D_\emp^-\cap D_{-1}^-\cap D_1^-\cap D_{-1,1}^-\cap\{p_3>0\}].\nonumber
\end{eqnarray}
From this it is reasonably straightforward to show that $D$ is the intersection of the 16 sets that appear in braces in (\ref{anni-result}).  It follows that $D$ can be expressed as in (\ref{cond-d}).  (The use of $D_{-1}^-$ instead of $D_{-1}^c$, for example, allows using symmetry in the derivation.)

Recall that we assumed $p_3<1$ for coalescing duality and $p_0+p_1+p_2+p_3\ne2$ for annihilating duality.  Neither assumption is necessary, as we now show.

If $p_3=1$, then we can solve (\ref{coalescing}) to get
\begin{eqnarray*}
z_\emp&=&0,\quad z_{-1}=(1-p_1)(1-z_0),\\ 
z_1&=&(1-p_2)(1-z_0),\quad z_{-1,1}=-(1+p_0-p_1-p_2)(1-z_0).
\end{eqnarray*}
The inequality (\ref{abs-sum}) holds if $|1-p_1|+|1-p_2|+|1+p_0-p_1-p_2|<1$, which requires $0<p_0<2(p_1+p_2-1)$.  But this is the same requirement as (\ref{cond-c}) with $p_3=1$.

If $p_3:=2-(p_0+p_1+p_2)$, we can solve (\ref{annihilating}) to get
\begin{eqnarray*}
z_\emp&=&0,\qquad z_0=1-{z_{-1}\over1-p_0-p_1},\\
z_1&=&{(1-p_0-p_2)z_{-1}\over1-p_0-p_1},\qquad
z_{-1,1}={(1-p_1-p_2)z_{-1}\over1-p_0-p_1},
\end{eqnarray*}
provided $p_0+p_1\ne1$.  $z_{-1}$ must have the same sign as $1-p_0-p_1$ to ensure that $|z_0|<1$.  We can also require, without loss of generality, that $z_{-1}/(1-p_0-p_1)\le1$, so that $z_0\ge0$.  Then $z_{-1}$, $z_1$, and $z_{-1,1}$ can have any combination of signs, so to evaluate the sum in (\ref{abs-sum}) we must consider eight cases.  Let 
\begin{eqnarray*}
D_{-1}:=\{p_0+p_1<1\},\quad
D_1:=\{p_0+p_2\le1\},\quad\, D_{-1,1}:=\{p_1+p_2\le1\},
\end{eqnarray*}
and define $D_{-1}^-$, $D_1^-$, and $D_{-1,1}^-$ to be the same sets with inequalities reversed.
Then a sufficient condition for ergodicity is that $(p_0,p_1,p_2,p_3)$ belong to
\begin{eqnarray}\label{anni-result-special}
D&:=&[D_{-1}\cap D_1\cap D_{-1,1}\cap\{p_0+p_1+p_2>1\}]\\
&&\quad{}\cup[D_{-1}\cap D_1\cap D_{-1,1}^-\cap\{p_0>0\}]\nonumber\\
&&\quad{}\cup[D_{-1}\cap D_1^-\cap D_{-1,1}\cap\{p_1>0\}]\nonumber\\
&&\quad{}\cup[D_{-1}\cap D_1^-\cap D_{-1,1}^-\cap\{p_2<1\}]\nonumber\\
&&\quad{}\cup[D_{-1}^-\cap D_1\cap D_{-1,1}\cap\{p_2>0\}]\nonumber\\
&&\quad{}\cup[D_{-1}^-\cap D_1\cap D_{-1,1}^-\cap\{p_1<1\}]\nonumber\\
&&\quad{}\cup[D_{-1}^-\cap D_1^-\cap D_{-1,1}\cap\{p_0<1\}]\nonumber\\
&&\quad{}\cup[D_{-1}^-\cap D_1^-\cap D_{-1,1}^-\cap\{p_0+p_1+p_2<2\}].\nonumber
\end{eqnarray}
From this it is straightforward to show that $D$ is the intersection of the eight sets that appear in braces in (\ref{anni-result-special}).  The result is that ergodicity holds if $p_0+p_1+p_2+p_3=2$, $p_0+p_1\ne1$, and $p_0,p_1,p_2,p_3\in(0,1)$.  This is consistent with (\ref{cond-d}).

Finally, we must remove the restriction that $p_0+p_1\ne1$.  So we consider the case in which $p_0+p_1=1$ and $p_2+p_3=1$.  If $p_1:=1-p_0$ and $p_2:=1-p_3$, we can solve (\ref{annihilating}) to get
\begin{eqnarray*}
z_\emp=z_{-1}=0,\quad z_1=(p_3-p_0)(1-z_0),\quad z_{-1,1}=(p_0+p_3-1)(1-z_0).
\end{eqnarray*}
With $0<z_0<1$, (\ref{abs-sum}) holds because $|p_3-p_0|+|p_0+p_3-1|<1$, assuming only that $0<p_0<1$ and $0<p_3<1$.  The result is that ergodicity holds if $p_0+p_1=1$, $p_2+p_3=1$, and $p_0,p_1,p_2,p_3\in(0,1)$.  This is also consistent with (\ref{cond-d}).
\end{proof}

\begin{remark}
The strict inequalities in (\ref{coal-result}) and (\ref{anni-result}) are the requirements for (\ref{abs-sum}) to hold in the various (8 or 16) cases.  If one of these conditions fails because equality holds instead of strict inequality, then ergodicity is assured if in addition $|z_\emp|>|z_{-1,1}|$, by virtue of Corollary~III.5.8 of Liggett (1985).  For example, consider Toral's (2001) case, $(p_0,p_1,p_2,p_3)=(1,4/25,4/25,7/10)$.  This belongs to $D_\emp\cap D_{-1}^-\cap D_1^-\cap D_{-1,1}$ (line 7 of (\ref{anni-result})) but $p_0=1$.  Here $z_{-1,1}=69z_\emp$, so the additional condition is not met, and annihilating duality is inconclusive in this case.    
\end{remark}

Again we have described game $B$ above.  Game $A$ is the special case $p_0=p_1=p_2=p_3=1/2$, and game $C$, by virtue of the discussion at the beginning of Section~\ref{SLLN-cont-time}, is game $B$ with $p_m$ replaced by $(1/2+p_m)/2$ for $m=0,1,2,3$.  We note that condition (a) often fails for game $B$ but nearly always holds for game $C$ because, if $p_m$ is replaced by $(1/2+p_m)/2$ in condition (a) for $m=0,1,2,3$, the requirement becomes
\begin{equation}\label{erg-C}
\max(|p_0-p_1|,|p_2-p_3|)+\max(|p_0-p_2|,|p_1-p_3|)<2,
\end{equation}
which nearly always holds.  This result is worth emphasizing in the form of a theorem.

\begin{theorem}\label{lim mu^N}
\emph{(a)} Assume that $(p_0,p_1,p_2,p_3)$ is such that Theorem~\ref{SLLN-thm} applies to define $\mu_B^N:=\mu^N(p_0,p_1,p_2,p_3)$ for all $N\ge3$, and that the spin system on ${\bf Z}$ with flip rates (\ref{rates}) is ergodic (see Theorem~\ref{ergodicity} for sufficient conditions).  Then $\lim_{N\to\infty}\mu_B^N$ exists.

\emph{(b)} Assume only that $(p_0,p_1,p_2,p_3)$ is not equal to $(0,0,1,0)$, $(0,1,0,0)$, $(1,0,1,1)$, or $(1,1,0,1)$.   Let $\mu_C^N:=\mu^N((1/2+p_0)/2,(1/2+p_1)/2,(1/2+p_2)/2,(1/2+p_3)/2)$, where $\mu^N$ is as in Theorem~\ref{SLLN-thm}.  Then $\lim_{N\to\infty}\mu_C^N$ exists.
\end{theorem}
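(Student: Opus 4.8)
The plan is to treat the two parts in turn, dispatching part (a) quickly since it merely formalizes the discussion preceding the theorem. By the marginal formula (\ref{mu2}) together with rotation invariance (Lemma~\ref{invariance})---which, after the relabeling of Section~\ref{spin}, identifies the distance-two marginal $\pi^N_{1,3}$ with $\pi^N_{-1,1}$---one has
\begin{equation*}
\mu_B^N=2\big[\pi^N_{-1,1}(0,0)p_0+\pi^N_{-1,1}(0,1)p_1+\pi^N_{-1,1}(1,0)p_2+\pi^N_{-1,1}(1,1)p_3\big]-1.
\end{equation*}
Ergodicity of the spin system supplies the weak convergence $\pi^N\to\pi$ recorded just before the theorem, hence the convergence (\ref{limit}) of the bracketed quantity to its spin-system counterpart; therefore $\lim_{N\to\infty}\mu_B^N$ exists.

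For part (b) I would reduce matters, via part (a), to ergodicity of the \emph{game-$C$} spin system. Its flip-rate parameters $a_m:=(1/2+p_m)/2$ lie in $[1/4,3/4]$, so the $N$-player game-$C$ chain is ergodic for every $N\ge3$ by Lemma~\ref{ergodicity-P}(a), and only the spin-system ergodicity is at issue. Since $|a_i-a_j|\le1/2$ throughout, the left-hand side of condition (\ref{cond-a}) for game $C$---that is, (\ref{erg-C})---never exceeds $1$, and equals $1$ exactly when one of the pairs $\{a_0,a_1\},\{a_2,a_3\}$ and one of the pairs $\{a_0,a_2\},\{a_1,a_3\}$ both equal $\{1/4,3/4\}$; in game-$B$ coordinates this says that some row and some column of $\left(\begin{smallmatrix}p_0&p_1\\p_2&p_3\end{smallmatrix}\right)$ each equal $\{0,1\}$. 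Off this locus Theorem~\ref{ergodicity}(a) gives ergodicity and part (a) then yields convergence of $\mu_C^N$; the locus itself is a union of eight line segments in $[0,1]^4$, each pinning three coordinates at $0$ or $1$ and leaving one free.

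It remains to examine these eight segments. Because $a_m\in(0,1)$ automatically, condition (\ref{cond-d}) for game $C$ reads $2\bar a-1<a_m<2\bar a$ for all $m$, where $\bar a:=(a_0+a_1+a_2+a_3)/4$, and a direct computation shows this holds on every open segment. At an endpoint the free coordinate also becomes extreme, and two cases arise. If the coordinates split as $p_0=p_3\ne p_1=p_2$ (the balanced endpoints $(0,1,1,0)$ and $(1,0,0,1)$), then $\bar a=1/2$ and all $a_m\in(0,1)=(2\bar a-1,2\bar a)$, so (\ref{cond-d}) persists. Otherwise three coordinates coincide and the fourth is an outlier landing exactly on the boundary value $2\bar a$ or $2\bar a-1$, so (\ref{cond-d}) fails marginally; when the outlier occupies slot $p_0$ or $p_3$ the monotonicity of (\ref{cond-b}) holds (attractiveness or repulsiveness) and Theorem~\ref{ergodicity}(b) rescues ergodicity, whereas when it occupies slot $p_1$ or $p_2$ that monotonicity breaks. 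The latter happens at precisely $(0,0,1,0),(0,1,0,0),(1,0,1,1),(1,1,0,1)$, which are the four exclusions.

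The main obstacle is exactly this finite but fussy case analysis: verifying (\ref{cond-d}) on each open segment, verifying (\ref{cond-b}) or (\ref{cond-d}) at each endpoint, and confirming that the four exclusions escape every criterion. For the last point one computes the annihilating-duality weights and finds the sum in (\ref{abs-sum}) equal to $1$ with $|z_\emp|=|z_{-1,1}|$, so that even the boundary refinement noted in the remark after Theorem~\ref{ergodicity} is inconclusive there. To keep the bookkeeping under control I would invoke the ergodicity-preserving symmetries---the duality $(p_0,p_1,p_2,p_3)\mapsto(q_3,q_2,q_1,q_0)$ of Corollary~\ref{couple}(a) and the reflection $p_1\leftrightarrow p_2$---under which the eight segments fall into three orbits, so that three representative computations settle the whole verification.
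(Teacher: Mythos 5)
Your proof is correct and follows essentially the same route as the paper: part (a) is exactly the weak-convergence discussion preceding the theorem, and part (b) identifies the failure locus of (\ref{erg-C}) as eight segments with three coordinates pinned at $0$ or $1$, then rescues ergodicity on them via conditions (d) and (b) of Theorem~\ref{ergodicity}, leaving precisely the four excluded points. The differences are organizational only---you treat open segments and endpoints separately and make the symmetry reduction explicit (note that for spin-system ergodicity the duality map should be cited from Corollary~\ref{couple-spin}(a) rather than Corollary~\ref{couple}(a)), where the paper handles each segment with its endpoints attached and dismisses the remaining cases as ``treated similarly.''
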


\begin{remark}
In the proof of part (b), we could use condition (d) of Theorem \ref{ergodicity} in place of condition (a) of that theorem.  The result is the same.
\end{remark}

\begin{proof}
The only step that remains to be checked is that, under the assumptions in part (b), ergodicity holds when the parameter vector is $((1/2+p_0)/2,(1/2+p_1)/2,(1/2+p_2)/2,(1/2+p_3)/2)$.  Condition (\ref{erg-C}) will fail if and only if $(p_0,p_1,p_2)$ equals $(0,1,1)$ or $(1,0,0)$, $(p_0,p_1,p_3)$ or $(p_0,p_2,p_3)$ equals $(0,1,0)$ or $(1,0,1)$, or $(p_1,p_2,p_3)$ equals $(0,0,1)$ or $(1,1,0)$.  In the first case, $(p_0,p_1,p_2)=(0,1,1)$, condition (b) of Theorem~\ref{ergodicity} applies if $p_3=1$, and condition (d) applies if $0\le p_3<1$.  The second case and the last two cases are treated similarly.  As for $(p_0,p_1,p_3)=(0,1,0)$, $p_2=0$ is ruled out by assumption, and condition (d) applies if $0<p_2\le 1$.  The last three cases are treated similarly.
\end{proof}

We conclude this section by showing that the invariance under rotation (and, if $p_1=p_2$, reflection)  that we found in the $N$-player model (see Lemma~\ref{invariance} and the discussion following it) carries over to the spin system, except that rotation invariance becomes translation invariance.  We could prove this as a consequence of the weak convergence of $\pi^N$ to $\pi$, but instead we give a direct proof.

Define $\tau:{\bf Z}\mapsto{\bf Z}$ by $\tau(i):=i+1$ and for $\bm x\in\{0,1\}^{\bf Z}$ define $\bm x_\tau\in\{0,1\}^{\bf Z}$ by $(\bm x_\tau)_i:=x_{\tau(i)}$.  Finally, define
$\tau:\{0,1\}^{\bf Z}\mapsto\{0,1\}^{\bf Z}$ by $\tau(\bm x):=\bm x_\tau$.  Then (cf.\ (\ref{invariance_property})) $c_i(\bm x)=c_{\tau^{-1}(i)}(\tau(\bm x))$ for all $\bm x\in\{0,1\}^{\bf Z}$ and $i\in{\bf Z}$, and therefore
\begin{equation}\label{tau-eq}
\mathscr{L}(f\circ\tau)=(\mathscr{L}f)\circ\tau
\end{equation}
for all $f$ depending on only finitely many components.  (This can be taken as the definition of \textit{translation invariance}.)  Similarly, define $\rho:{\bf Z}\mapsto{\bf Z}$ by $\rho(i):=-i$ and for $\bm x\in\{0,1\}^{\bf Z}$ define $\bm x_\rho\in\{0,1\}^{\bf Z}$ by $(\bm x_\rho)_i:=x_{\rho(i)}$.  Finally, define $\rho:\{0,1\}^{\bf Z}\mapsto\{0,1\}^{\bf Z}$ by $\rho(\bm x):=\bm x_\rho$.  If $p_1=p_2$, then $c_i(\bm x)=c_{\rho^{-1}(i)}(\rho(\bm x))$ for all $\bm x\in\{0,1\}^{\bf Z}$ and $i\in{\bf Z}$, and therefore
\begin{equation}\label{rho-eq}
\mathscr{L}(f\circ\rho)=(\mathscr{L}f)\circ\rho
\end{equation}
for all $f$ depending on only finitely many components.  (We might call this \textit{reflection invariance}.)

\begin{lemma}\label{invariance-spin}
If the spin system on ${\bf Z}$ with flip rates (\ref{rates}) is ergodic with unique stationary distribution $\pi$, then $\pi=\pi\circ\tau^{-1}$.  Under the same assumption, if also $p_1=p_2$, then $\pi=\pi\circ\rho^{-1}$.
\end{lemma}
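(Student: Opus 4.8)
The plan is to mimic the uniqueness argument already used for the discrete-time chain in Lemma~\ref{invariance}: I would exhibit a second stationary distribution obtained from $\pi$ by the symmetry in question and then invoke uniqueness. The tool replacing the matrix identity $\bm\pi_\sigma=\bm\pi$ is the generator characterization of invariant measures for Feller processes, namely that a probability measure $\mu$ on $\{0,1\}^{\bf Z}$ is stationary for the spin system if and only if $\int\mathscr{L}f\,d\mu=0$ for every $f$ in a core, which here may be taken to be the local functions (those depending on only finitely many components); this is standard (cf.\ Liggett 1985). Since $\pi$ is assumed stationary, it satisfies $\int\mathscr{L}f\,d\pi=0$ for all such $f$.

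First I would define the pushforward $\pi\circ\tau^{-1}$, which is again a probability measure, and record the change-of-variables identity $\int g\,d(\pi\circ\tau^{-1})=\int(g\circ\tau)\,d\pi$. I would also note that if $f$ is local then so is $f\circ\tau$ (a shift of a finite coordinate set is finite), so $f\circ\tau$ lies in the core. Taking $g:=\mathscr{L}f$ and using the translation-invariance identity (\ref{tau-eq}), the computation is
\begin{equation*}
\int\mathscr{L}f\,d(\pi\circ\tau^{-1})=\int(\mathscr{L}f)\circ\tau\,d\pi=\int\mathscr{L}(f\circ\tau)\,d\pi=0,
\end{equation*}
the last equality by stationarity of $\pi$ applied to the local function $f\circ\tau$. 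Since this holds for every local $f$, the measure $\pi\circ\tau^{-1}$ is stationary, and ergodicity (in particular uniqueness of the stationary distribution) forces $\pi\circ\tau^{-1}=\pi$, which is the first assertion.

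The reflection case is identical in structure: assuming $p_1=p_2$, I would repeat the three-line computation with $\rho$ in place of $\tau$, now invoking (\ref{rho-eq}) and the fact that $f\circ\rho$ is local whenever $f$ is, to conclude that $\pi\circ\rho^{-1}$ is stationary and hence equals $\pi$. I do not expect any serious obstacle; the only point requiring care is the correct invocation of the generator characterization of stationarity together with the observation that the core of local functions is preserved by $\tau$ and $\rho$, which is exactly what makes the pushforward argument go through. Everything else is the same bookkeeping as in the proof of Lemma~\ref{invariance}.
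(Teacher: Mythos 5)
Your proposal is correct and is essentially identical to the paper's own proof: the same three-term chain of equalities (change of variables, the identity (\ref{tau-eq}), and stationarity of $\pi$ applied to $f\circ\tau$), followed by uniqueness, with the reflection case handled verbatim via (\ref{rho-eq}). The only difference is that you spell out the generator characterization of stationarity and the closure of local functions under $\tau$ and $\rho$, which the paper leaves implicit.
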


\begin{proof}
From (\ref{tau-eq}), we have 
$$
0=\int\mathscr{L}(f\circ\tau)\,d\pi=\int(\mathscr{L}f)\circ\tau\,d\pi=
\int\mathscr{L}f\,d(\pi\circ\tau^{-1}) 
$$
for all $f$ depending on only finitely many components, so $\pi\circ\tau^{-1}$ is stationary.  The second conclusion follows from (\ref{rho-eq}) in the same way.
\end{proof}

\begin{corollary}\label{symm-spin}
If the spin system on ${\bf Z}$ with flip rates (\ref{rates}) is ergodic with unique stationary distribution $\pi$, and if also $p_1=p_2$, then the $-1,1$ two-dimensional marginal $\pi_{-1,1}$ of $\pi$ satisfies $\pi_{-1,1}(0,1)=\pi_{-1,1}(1,0)$.
\end{corollary}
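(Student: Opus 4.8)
The plan is to mimic the proof of Corollary~\ref{symm} for the $N$-player model, but to replace the dihedral-group reflection used there by the reflection invariance of the spin system established in Lemma~\ref{invariance-spin}. Since we are assuming ergodicity and $p_1=p_2$, that lemma supplies $\pi=\pi\circ\rho^{-1}$, where $\rho$ is the reflection $\rho(i):=-i$ acting on configurations via $(\bm x_\rho)_i:=x_{\rho(i)}=x_{-i}$. This is the single substantive input; everything else is bookkeeping.

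The key observation is that $\rho$ fixes the site $0$ and interchanges the two neighboring sites $-1$ and $1$ that index the marginal $\pi_{-1,1}$: indeed $(\bm x_\rho)_{-1}=x_{\rho(-1)}=x_1$ and $(\bm x_\rho)_1=x_{\rho(1)}=x_{-1}$. Consequently, if $\bm X$ is distributed according to $\pi$, then $\rho(\bm X)=\bm X_\rho$ is also distributed according to $\pi$ (this is exactly the content of $\pi=\pi\circ\rho^{-1}$), so the pair $((\bm X_\rho)_{-1},(\bm X_\rho)_1)=(X_1,X_{-1})$ has the same joint law under $\pi$ as the pair $(X_{-1},X_1)$.

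I would then read off the claim directly from this symmetry, evaluating the common two-dimensional distribution at the configuration $(0,1)$:
$$
\pi_{-1,1}(0,1)=\P(X_{-1}=0,\,X_1=1)=\P(X_1=0,\,X_{-1}=1)=\pi_{-1,1}(1,0).
$$
In the language of Corollary~\ref{symm} this is the same as taking the reflection identity for $\pi$ and summing out all coordinates other than those at sites $-1$ and $1$; here no summation is even needed, since $\pi_{-1,1}$ is already the relevant marginal. There is essentially no obstacle to the argument: the only point requiring care is confirming that $\rho$ swaps precisely the two sites $-1$ and $1$, which is immediate from $\rho(i)=-i$, since the real work—reflection invariance of $\pi$—has already been carried out in Lemma~\ref{invariance-spin}.
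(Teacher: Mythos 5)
Your proposal is correct and follows exactly the paper's route: the paper's proof is the one-line observation that the claim is immediate from the second conclusion of Lemma~\ref{invariance-spin} (reflection invariance $\pi=\pi\circ\rho^{-1}$ when $p_1=p_2$), which is precisely the input you use. You have simply spelled out the bookkeeping—that $\rho$ fixes site $0$ and swaps sites $-1$ and $1$, so the joint law of $(X_{-1},X_1)$ under $\pi$ is exchangeable—which the paper leaves implicit.
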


\begin{proof}
This is immediate from the second conclusion of Lemma~\ref{invariance-spin}.
\end{proof}

\section{SLLN for the spin system}\label{SLLN-spin}

The approach used in Section~\ref{SLLN-cont-time} does not apply here.  One problem is that the cumulative profit to the (infinite) ensemble of players is ill-defined (being an infinite sum of $\pm1$ terms) and changes instantaneously.  So instead we consider the cumulative profit to a single player (or to a finite set of players).  This, of course, is non-Markovian.

\begin{theorem}\label{SLLN-thm-spin}
Assume that the spin system on ${\bf Z}$ with flip rates (\ref{rates}) is ergodic with unique stationary distribution $\pi$.  Let $\hat S_t$ be the cumulative profit at time $t$ to player $0$ who, along with infinitely many other players, plays according to the spin system model.  Then $\lim_{t\to\infty}t^{-1}\hat S_t=\mu$ {\rm a.s.}, where the mean parameter $\mu$ can be expressed in terms of certain two-dimensional marginals $\cdots=\pi_{-2,0}=\pi_{-1,1}=\pi_{0,2}=\cdots$ of $\pi$ as
\begin{eqnarray}\label{mu2-spin}
\mu&=&\sum_{u=0}^1\sum_{v=0}^1\pi_{-1,1}(u,v)(p_{2u+v}-q_{2u+v})\\
&=&2[\pi_{-1,1}(0,0)p_0+\pi_{-1,1}(0,1)p_1\nonumber\\
&&\qquad\quad{}+\pi_{-1,1}(1,0)p_2+\pi_{-1,1}(1,1)p_3]-1,\nonumber
\end{eqnarray}
or in terms of the one-dimensional marginals $\cdots=\pi_{-1}=\pi_0=\pi_1=\cdots$ of $\pi$ as
\begin{equation}\label{mu3-spin}
\mu=\pi_0(1)-\pi_0(0)=2\pi_0(1)-1.
\end{equation}
\end{theorem}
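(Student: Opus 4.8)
The plan is to realize the profit process as an additive functional of the spin system, apply a martingale/ergodic-theorem decomposition to get (\ref{mu2-spin}), and then obtain (\ref{mu3-spin}) directly from the stationarity of $\pi$ rather than from a look-back argument. First I would construct the process through the standard graphical (Harris) representation: attach to each site $i\in{\bf Z}$ an independent rate-$1$ Poisson clock together with an i.i.d.\ sequence of uniform marks. When the clock at site $0$ rings at a time $T_k$, player $0$ plays, and the mark is used to set the outcome to $+1$ (a win) with probability $p_{m_0(\bm X(T_k^-))}$ and to $-1$ (a loss) with probability $q_{m_0(\bm X(T_k^-))}$, with $X_0$ updated to $1$ on a win and to $0$ on a loss. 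This reproduces the flip rates (\ref{rates}) while also recording the ``fictitious'' plays (a loser who loses, or a winner who wins), exactly as in Section~\ref{SLLN-cont-time}; in particular player $0$ plays at rate $1$. With $\hat S_t$ the sum of these $\pm1$ outcomes up to time $t$, the compensator of $\hat S_t$ is $A_t:=\int_0^t[p_{m_0(\bm X(s))}-q_{m_0(\bm X(s))}]\,ds$, so that $M_t:=\hat S_t-A_t$ is a martingale with jumps bounded by $2$ and predictable quadratic variation $\langle M\rangle_t=O(t)$.

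Next I would treat the two terms separately. The martingale law of large numbers (bounded jumps and $\langle M\rangle_t=O(t)$) gives $M_t/t\to0$ a.s. For the compensator, the assumed ergodicity means $\pi$ is the unique invariant measure, so the stationary version of $\{\bm X(t)\}$ is ergodic under time shifts; the continuous-time Birkhoff theorem then yields
$$
\frac{A_t}{t}=\frac1t\int_0^t[p_{m_0(\bm X(s))}-q_{m_0(\bm X(s))}]\,ds\longrightarrow \E_\pi[p_{m_0}-q_{m_0}]=:\mu\quad\text{a.s.}
$$
Since $m_0(\bm x)=2x_{-1}+x_1$ depends on $\bm x$ only through $(x_{-1},x_1)$, the right-hand side equals $\sum_{u,v}\pi_{-1,1}(u,v)(p_{2u+v}-q_{2u+v})$, which is (\ref{mu2-spin}); adding the two limits proves $t^{-1}\hat S_t\to\mu$ a.s.

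Finally, for (\ref{mu3-spin}) I would exploit the stationarity of $\pi$ directly. Applying $\mathscr L$ to $f(\bm x):=x_0$ leaves only the $i=0$ term, giving $(\mathscr Lf)(\bm x)=p_{m_0(\bm x)}\mathbf 1_{\{x_0=0\}}-q_{m_0(\bm x)}\mathbf 1_{\{x_0=1\}}$, so the identity $\int\mathscr Lf\,d\pi=0$ reads $\E_\pi[p_{m_0}\mathbf 1_{\{x_0=0\}}]=\E_\pi[q_{m_0}\mathbf 1_{\{x_0=1\}}]$. Writing $q_{m_0}=1-p_{m_0}$ on the right and rearranging gives $\E_\pi[p_{m_0}]=\pi_0(1)$, whence $\mu=\E_\pi[2p_{m_0}-1]=2\pi_0(1)-1=\pi_0(1)-\pi_0(0)$, which is (\ref{mu3-spin}).

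The main obstacle is the a.s.\ convergence of $A_t/t$: Birkhoff's theorem delivers it only for the stationary initial law (equivalently, for $\pi$-a.e.\ start), so to obtain the a.s.\ statement for an arbitrary initial distribution I would invoke the ergodicity of the spin system, namely convergence in distribution to $\pi$ regardless of the start, together with a coupling to the stationary version (uniqueness of the invariant measure forces the time-averages to share the same a.s.\ limit for every start). The martingale estimate, by contrast, is routine once the bounded-jump, linearly-growing-quadratic-variation structure is recorded.
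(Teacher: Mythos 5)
Your proposal is correct in substance but follows a genuinely different route from the paper. The paper's proof also starts from a stationary version and the ergodic theorem, but it applies Birkhoff only to the occupation times of the neighbor pair $(X_{-1},X_1)$, then treats player $0$'s profit conditionally on the neighbors being frozen at $(u,v)$ as a $\pm1$ i.i.d.\ sequence subordinated to a Poisson process (with the $2\times2$ transition matrix whose rows are both $(q_{2u+v},p_{2u+v})$), and finally glues the resulting excursions together by a shifting argument justified by the lack-of-memory property and the equality of the rows. Your Doob--Meyer decomposition $\hat S_t=M_t+A_t$, with the martingale SLLN killing $M_t/t$ and Birkhoff applied directly to the compensator $A_t=\int_0^t[p_{m_0(\bm X(s))}-q_{m_0(\bm X(s))}]\,ds$, reaches (\ref{mu2-spin}) while entirely avoiding that excursion-gluing step, which is the most delicate (and least detailed) point of the paper's argument; this is a real gain in rigor and economy. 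Your derivation of (\ref{mu3-spin}) is also different and arguably better: the paper deduces it from the finite-$N$ identity (\ref{mu3}) via the weak convergence $\pi^N\to\pi$, which implicitly invokes ergodicity of the $N$-player chains, whereas your identity $\int\mathscr{L}f\,d\pi=0$ with $f(\bm x):=x_0$ (yielding $\E_\pi[p_{m_0}]=\pi_0(1)$) stays entirely within the spin system and matches a technique the paper itself uses later, in the proof of Theorem~\ref{p0=1,p3=0-thm-spin}.

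One caveat: your final paragraph overstates what is available for non-stationary starts. Ergodicity in the paper's sense (convergence in distribution to $\pi$ from every initial law) does not by itself yield almost sure convergence of time averages from an arbitrary start, and uniqueness of the invariant measure does not by itself produce a successful coupling; making that step rigorous would require genuinely more work. However, this does not put you behind the paper: its own proof simply takes $\bm X(0)\sim\pi$ and never addresses other initial distributions, so the theorem as proved there is for the stationary version, and your argument covers exactly that case.
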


\begin{remark}
$(i)$ There is nothing special about player 0, so he can be replaced by any player $i\in{\bf Z}$.  In fact, for arbitrary finite $A\subset{\bf Z}$, let $\hat S_t^A$ be the cumulative profit \textit{per player} at time $t$ to the ensemble of players belonging to $A$.  Then $\lim_{t\to\infty}t^{-1}\hat S_t^A=\mu$ {\rm a.s.}, as a consequence of the theorem.

$(ii)$  Notice that $\mu$ coincides with $\lim_{N\to\infty}\mu^N$ by (\ref{limit}) and Theorem~\ref{SLLN-thm}, so we now know what the limits in Theorem~\ref{lim mu^N} are.
\end{remark}

\begin{proof}
Let $\{\bm X(t)\}$ be a stationary version of the spin system, that is, $\bm X(0)$ has distribution $\pi$.  Then, by the ergodic theorem for stationary processes,
$$
\lim_{t\to\infty}{1\over t}\int_0^t f(\bm X(s))\,ds\to\int_{\{0,1\}^{\bf Z}} f(\bm x)\,\pi(d\bm x)\;\;\text{a.s.}
$$
for all $f\in L^1(\pi)$.  In particular, with $f$ being the indicator of $\{\bm x\in\{0,1\}^{\bf Z}: x_{-1}=u,\,x_1=v\}$, we have
$$
\lim_{t\to\infty}{|\{0\le s\le t: X_{-1}(s)=u,\,X_1(s)=v\}|\over t}=\pi_{-1,1}(u,v)
$$
for all $u,v\in\{0,1\}$, where $|\cdot|$ denotes Lebesgue measure.

Now let us consider an interval $[\tau_0,\tau_1)$ such that $X_{-1}(s)=u$ and $X_1(s)=v$ for all $s\in[\tau_0,\tau_1)$.  Conditional on this, $\{X_0(s)\}$ is a continuous-time Markov chain in $\{0,1\}$ that jumps from 0 to 1 at rate $p_{2u+v}$ and from 1 to 0 at rate $q_{2u+v}$.  We can augment this process with fictitious jumps as follows:  Consider the discrete-time Markov chain $\{Y(n)\}$ in $\{0,1\}$ with one-step transition matrix
\begin{equation}\label{X_0-trans}
\setlength{\arraycolsep}{1.5mm}
\left(\begin{array}{cc}q_{2u+v}&p_{2u+v}\\q_{2u+v}&p_{2u+v}\end{array}\right),
\end{equation}
and let $\{N(t)\}$ be an independent, rate 1, Poisson process.  Then $\{Y(N(s))\}$ is the continuous-time Markov chain $\{X_0(s)\}$ with fictitious jumps whenever player 0 wins as a winner or loses as a loser.  It allows us to determine the cumulative profit of player 0 over the course of the time interval $[\tau_0,\tau_1)$.  The cumulative profit process has the following form:  Let $S_n-S_{n-1}$ be $\pm1$ depending on the $n$th jump of $\{Y(n)\}$.  (A jump to 0, from either 0 or 1, means a profit of $-1$, while a jump to 1, from either 0 or 1, means a profit of 1.)  Then $n^{-1}S_n\to p_{2u+v}-q_{2u+v}$ a.s.\ by the i.i.d.\ SLLN, so
\begin{equation}\label{slln}
{S_{N(t)}\over t}={S_{N(t)}\over N(t)}\;{N(t)\over t}\to p_{2u+v}-q_{2u+v}\;\;\text{a.s.}
\end{equation}

This assumes that the interval $[\tau_0,\tau_1)$ on which $X_{-1}(s)=u$ and $X_1(s)=v$ is the interval $[0,\infty)$.  Of course, it is not.  The set of times $s$ for which $X_{-1}(s)=u$ and $X_1(s)=v$ is a union of random intervals of the form $[\tau_0,\tau_1)$, which we may write as
$$
[\tau_0,\tau_1)\cup[\tau_2,\tau_3)\cup[\tau_4,\tau_5)\cup\cdots,
$$
where $0\le \tau_0<\tau_1<\tau_2<\cdots$.  We claim that, if these intervals are shifted to the left so as to fill up the interval $[0,\infty)$, then the profit process behaves as in (\ref{slln}).  This is due to the lack-of-memory property of the exponential distribution and to the fact that the two rows of (\ref{X_0-trans}) are equal.  

Now we consider the cumulative profit process for player 0.  Let $\hat S_t$ be cumulative profit by time $t$, and for $u,v\in\{0,1\}$ let $\hat S_t^{(u,v)}$ be cumulative profit by time $t$ \textit{accumulated at times} $s\in[0,t)$ for which $X_{-1}(s)=u$ and $X_1(s)=v$.  Then
\begin{eqnarray*}
{\hat S_t\over t}&=&\sum_{u=0}^1\sum_{v=0}^1 {\hat S_t^{(u,v)}\over t}\\
&=&\sum_{u=0}^1\sum_{v=0}^1 {|\{0\le s\le t: X_{-1}(s)=u,\,X_1(s)=v\}|\over t}\\
&&\qquad\qquad\qquad\qquad{}\cdot{\hat S_t^{(u,v)}\over|\{0\le s\le t: X_{-1}(s)=u,\,X_1(s)=v\}|}\\
&\to&\sum_{u=0}^1\sum_{v=0}^1\pi_{-1,1}(u,v)(p_{2u+v}-q_{2u+v})\\
&=&2[\pi_{-1,1}(0,0)p_0+\pi_{-1,1}(0,1)p_1+\pi_{-1,1}(1,0)p_2+\pi_{-1,1}(1,1)p_3]-1\\
&=&\mu\;\text{ a.s.},
\end{eqnarray*}
as claimed.  We can now deduce (\ref{mu3-spin}) from (\ref{mu3}).
\end{proof}

Next we generalize Corollary~\ref{couple} to our spin system.

\begin{corollary}\label{couple-spin} 
Let us refer to the spin system on ${\bf Z}$ with flip rates (\ref{rates}) as the spin system with parameter vector $(p_0,p_1,p_2,p_3)$, and let us denote $\mu$ of Theorem~\ref{SLLN-thm-spin} by $\mu(p_0,p_1,p_2,p_3)$ to emphasize its dependence on the parameter vector.

\emph{(a)} If the spin system with parameter vector $(p_0,p_1,p_2,p_3)$ is ergodic, then the spin system with parameter vector $(q_3,q_2,q_1,q_0)$ is ergodic, and 
$$
\mu(p_0,p_1,p_2,p_3)=-\mu(q_3,q_2,q_1,q_0).
$$
In particular, if $p_0+p_3=1$ and $p_1+p_2=1$, then $\mu(p_0,p_1,p_2,p_3)=0$.

\emph{(b)} If the spin system with parameter vector $(p_0,p_1,p_2,p_3)$ is ergodic, then
$$
2\min(p_0,p_1,p_2,p_3)-1\le\mu(p_0,p_1,p_2,p_3)\le2\max(p_0,p_1,p_2,p_3)-1.
$$
\end{corollary}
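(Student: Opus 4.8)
The plan is to mirror the proofs of Corollary~\ref{couple}, but to work directly at the level of the spin system rather than through a coupling of finite chains. For part (a) the essential ingredient is the \emph{spin-flip symmetry}. Define $\kappa:\{0,1\}^{\bf Z}\mapsto\{0,1\}^{\bf Z}$ by $(\kappa(\bm x))_i:=1-x_i$, the global complementation of all spins. A direct check, using $m_i(\kappa(\bm x))=3-m_i(\bm x)$ together with $q_m=1-p_m$, shows that the flip rates $c_i$ of (\ref{rates}) for the parameter vector $(p_0,p_1,p_2,p_3)$ and the flip rates $c_i'$ for the parameter vector $(q_3,q_2,q_1,q_0)$ satisfy $c_i'(\bm x)=c_i(\kappa(\bm x))$ for all $i\in{\bf Z}$ and $\bm x\in\{0,1\}^{\bf Z}$. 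Since $\kappa$ is an involution with $\kappa(\bm x^i)=(\kappa(\bm x))^i$, this yields the conjugacy $\mathscr{L}'(f\circ\kappa)=(\mathscr{L}f)\circ\kappa$ for every $f$ depending on only finitely many components, exactly analogous to (\ref{tau-eq}) and (\ref{rho-eq}).

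From here I would argue as in Lemma~\ref{invariance-spin}: the generator identity lifts to the semigroups, $T'(t)(f\circ\kappa)=(T(t)f)\circ\kappa$, so the ergodicity of the $(p_0,p_1,p_2,p_3)$ system transfers to the $(q_3,q_2,q_1,q_0)$ system, whose unique stationary distribution is then $\pi':=\pi\circ\kappa^{-1}$. Computing the one-dimensional marginal gives $\pi_0'(1)=\pi\{\bm x:x_0=0\}=\pi_0(0)=1-\pi_0(1)$, so by (\ref{mu3-spin}), $\mu(q_3,q_2,q_1,q_0)=2\pi_0'(1)-1=-(2\pi_0(1)-1)=-\mu(p_0,p_1,p_2,p_3)$. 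The ``in particular'' assertion then follows on observing that $p_0+p_3=1$ and $p_1+p_2=1$ force $(q_3,q_2,q_1,q_0)=(p_0,p_1,p_2,p_3)$, whence $\mu=-\mu=0$.

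For part (b) the cleanest route is to read the bound directly off formula (\ref{mu2-spin}). Because the four numbers $\pi_{-1,1}(u,v)$, $u,v\in\{0,1\}$, are nonnegative and sum to $1$, and because $(u,v)\mapsto 2u+v$ is a bijection of $\{0,1\}^2$ onto $\{0,1,2,3\}$, the quantity $\mu=\sum_{u,v}\pi_{-1,1}(u,v)(2p_{2u+v}-1)$ is a convex combination of the four numbers $2p_m-1$, $m=0,1,2,3$. Hence $\mu$ lies between $\min_m(2p_m-1)=2\min(p_0,p_1,p_2,p_3)-1$ and $\max_m(2p_m-1)=2\max(p_0,p_1,p_2,p_3)-1$. (Alternatively, one can couple player $0$'s profit process, built as in the proof of Theorem~\ref{SLLN-thm-spin} from the subordinated chain (\ref{X_0-trans}), with two i.i.d.\ $\pm1$ sequences having success probabilities $\min_m p_m$ and $\max_m p_m$, and then pass to the limit via the SLLN; this parallels the discrete argument.)

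The only step that requires genuine care is the transfer of \emph{ergodicity} (not merely of a stationary measure) across the conjugacy in part (a). I must verify that the generator identity $\mathscr{L}'(f\circ\kappa)=(\mathscr{L}f)\circ\kappa$ propagates to convergence of $T'(t)g$ for every $g$ in the core; this is immediate once the semigroup conjugacy $T'(t)(f\circ\kappa)=(T(t)f)\circ\kappa$ is in hand, since then $T'(t)g=(T(t)(g\circ\kappa))\circ\kappa\to\int g\,d(\pi\circ\kappa^{-1})$ as $t\to\infty$. Everything else is routine.
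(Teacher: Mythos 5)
Your proof is correct, and its core coincides with the paper's: the paper proves part (a) with exactly your global spin-flip conjugacy (it calls your $\kappa$ by $\eta$), checking $c_i^{p_0,p_1,p_2,p_3}(\bm x)=c_i^{q_3,q_2,q_1,q_0}(\eta(\bm x))$ and hence $\mathscr{L}^{p_0,p_1,p_2,p_3}(f\circ\eta)=(\mathscr{L}^{q_3,q_2,q_1,q_0}f)\circ\eta$. Where you genuinely differ is in how the numerical conclusions are extracted. The paper obtains the identity for $\mu$, the ``in particular'' statement, and all of part (b) by taking limits $\mu^N\to\mu$ in the discrete Corollary~\ref{couple}, invoking the convergence (\ref{limit}) of the finite-$N$ stationary distributions; in particular, its proof of (b) is ultimately the discrete coupling bound passed to the limit. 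You instead stay entirely at the level of the infinite system: the identity $\mu(p_0,p_1,p_2,p_3)=-\mu(q_3,q_2,q_1,q_0)$ comes from $\pi'=\pi\circ\kappa^{-1}$ and (\ref{mu3-spin}); the ``in particular'' claim from the observation that $p_0+p_3=1$ and $p_1+p_2=1$ force $(q_3,q_2,q_1,q_0)=(p_0,p_1,p_2,p_3)$; and part (b) from reading (\ref{mu2-spin}) as a convex combination of the four numbers $2p_m-1$. Your route buys self-containedness: taking limits in Corollary~\ref{couple} tacitly requires the finite-$N$ chains to be ergodic for all large $N$, so that $\mu^N$ is defined and converges (true under the corollary's hypothesis, but left implicit in the paper), whereas your argument uses only the stated assumption that the spin system itself is ergodic, via Theorem~\ref{SLLN-thm-spin}. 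Your convexity reading of (\ref{mu2-spin}) is also more elementary than coupling-plus-limit, and would in fact prove part (b) of the discrete Corollary~\ref{couple} directly from (\ref{mu2}) as well. What the paper's route buys is brevity and uniformity: once the conjugacy and the relation $\mu=\lim_{N\to\infty}\mu^N$ are in place, all remaining claims follow in one line from the already-proved discrete corollary.
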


\begin{remark}
Three of the four regions in Theorem~\ref{ergodicity} are invariant under the transformation $\Lambda(p_0,p_1,p_2,p_3):=(q_3,q_2,q_1,q_0)$, but region $C$ corresponding to coalescing duality is not.  When $p_1=p_2$, we find that the (three-dimensional) volume of $C\cup\Lambda(C)$ is 251/720, which is nearly 60\% larger than the volume of $C$ itself.  However, the volume of the union of the five regions (the four regions of Theorem~\ref{ergodicity} and the transformed region $C$) is only 557/672, representing about 82.9\% of the parameter space.  When we do not assume that $p_1=p_2$, the (four-dimensional) volume of the union of the five regions can be estimated to represent about 79.3\% of the parameter space.
\end{remark}

\begin{proof}
Define the function $\eta:\{0,1\}^{\bf Z}\mapsto\{0,1\}^{\bf Z}$ by $\eta(\bm x):=(\ldots,1-x_{-1},1-x_0,1-x_1,\ldots)$.  Let $\{\bm X(t)\}$ be the spin system on ${\bf Z}$ with initial state $\bm x$ and flip rates $c_i^{p_0,p_1,p_2,p_3}(\bm x)$ given by (\ref{rates}), where the superscripts are merely intended to emphasize the parameter vector.  Then $\bm X'(t):=\eta(\bm X(t))$ defines a spin system $\{\bm X'(t)\}$ on ${\bf Z}$ with initial state $\bm x':=\eta(\bm x)$ and flip rates $c_i^{q_3,q_2,q_1,q_0}(\bm x')$.  To justify this assertion, it suffices to check that
$$
c_i^{p_0,p_1,p_2,p_3}(\bm x)=c_i^{q_3,q_2,q_1,q_0}(\eta(\bm x)),\qquad i\in{\bf Z},\;\bm x\in\{0,1\}^{\bf Z},
$$
hence that $\mathscr{L}^{p_0,p_1,p_2,p_3}(f\circ\eta)=(\mathscr{L}^{q_3,q_2,q_1,q_0}f)\circ\eta$.  The first conclusion of part (a) follows.  We can now take limits in Corollary~\ref{couple} to obtain the second conclusion of part (a) as well as the conclusion of part (b). 
\end{proof}

\section{The case $p_0=1$, $p_3=0$ for the spin system}\label{p0=1,p3=0-spin}

Suppose $p_0=1$ and $p_3=0$.  Let $S$ be the countable subset of $\{0,1\}^{\bf Z}$ consisting of states with alternating 0s and 1s with the single exception of a pair of adjacent 0s or a pair of adjacent 1s.  In the special case in which $p_1=p_2=1/2$, if the initial distribution is concentrated on $S$, the spin system on ${\bf Z}$ with flip rates (\ref{rates}) coincides with one studied in Example VIII.1.48 of Liggett (1985) in connection with the exclusion process.  To justify this assertion, observe that states with three or more consecutive 0s or three or more consecutive 1s are inaccessible by our spin system, and these are the only states at which the flip rates differ in the two spin systems.

If we assume that $p_1=p_2\in(1/2,1)$, then we can confirm the Parrondo effect for our spin system, at least if the initial distribution is concentrated on $S$ and if we consider the profit to a finite even number of consecutive players.

\begin{theorem}\label{p0=1,p3=0-thm-spin}
Let $p_0=1$, $p_1=p_2\in(1/2,1)$, and $p_3=0$.  Let $\mu^N_B$ (resp., $\mu^N_C$) denote the mean profit per player per unit of time to an ensemble of $N\ge1$ consecutive players always playing game $B$ (resp., $C$) in the spin system on ${\bf Z}$ with flip rates (\ref{rates}), assuming the initial distribution is concentrated on $S$.  Then $\mu^N_B=0$ for all even $N\ge2$, $\mu^N_B$ does not exist for all odd $N\ge1$, and $\mu^N_C>0$ for all $N\ge1$.  In particular, the Parrondo effect is present if and only if $N$ is even.
\end{theorem}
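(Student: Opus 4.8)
The plan is to exploit the fact that the hypotheses force $p_0=1$ and $q_3=1$, so that three consecutive like spins are unstable; consequently the set $S$ (alternating except for one adjacent like pair) is invariant, and the restricted game-$B$ dynamics collapses to a single random walk. First I would check directly from (\ref{rates}) that, for a configuration in $S$, the only sites with positive flip rate are the two sites of the unique adjacent pair: a ``$00$'' pair flips each of its sites to $1$ at rate $p_1=p_2=p$, a ``$11$'' pair flips each of its sites to $0$ at rate $q_1=q_2=1-p$, and each such flip again lands in $S$ with the defect displaced by one bond and its type reversed. Thus the location $D(t)$ of the defect bond performs a continuous-time \emph{symmetric} nearest-neighbor random walk on ${\bf Z}$ (its embedded jump chain is simple symmetric random walk; only the holding rate alternates between $2p$ and $2(1-p)$), and the type alternates at every jump. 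Hence $\epsilon:=(D(t)+\text{type})\bmod 2$ is conserved, and a short computation gives the readout $x_i(t)=(\epsilon+i)\bmod 2$ when $D(t)\ge i$ and $x_i(t)=1-((\epsilon+i)\bmod 2)$ when $D(t)\le i-1$: each player's spin is constant on each side of the dual bond at $i$ and flips exactly when the defect crosses that bond.

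For game $B$ I would compute the profit to $N$ consecutive players (by translation invariance, players $1,\dots,N$) via the fictitious-jump device of Theorem~\ref{SLLN-thm-spin}: the per-player profit rate equals $\tfrac{2}{N}\overline M-1$, where $M(s):=\sum_{i=1}^N x_i(s)$ and $\overline M:=\lim_{t\to\infty}t^{-1}\int_0^t M(s)\,ds$ when this exists. By the readout, $M(s)=g(D(s))$ for a bounded $g$ that is \emph{constant} equal to $A:=\sum_{i=1}^N((\epsilon+i)\bmod 2)$ for $D\ge N$ and constant equal to $N-A$ for $D\le 0$. The defect walk is null recurrent, so the fraction of time it spends in the finite window $\{1,\dots,N-1\}$ tends to $0$ a.s., and $\overline M$ reduces to $A\cdot(\text{time-fraction }D\ge N)+(N-A)\cdot(\text{time-fraction }D\le 0)$. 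When $N$ is even, $A=N/2=N-A$, so $\overline M=N/2$ and $\mu_B^N=0$. When $N$ is odd, $A\in\{(N-1)/2,(N+1)/2\}$ with $A\ne N-A$; the arcsine law for the half-line occupation time of a recurrent walk gives $\limsup=1$, $\liminf=0$ a.s. for $t^{-1}|\{s\le t:D(s)\ge N\}|$, so the per-player profit oscillates between $-1/N$ and $1/N$ and $\mu_B^N$ fails to exist.

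For game $C$ the parameters become $(3/4,\,1/4+p/2,\,1/4+p/2,\,1/4)$, and I would first note these satisfy condition~(a) of Theorem~\ref{ergodicity}, since the left side of (\ref{cond-a}) equals $2\max((1-p)/2,\,p/2)=p<1$; thus the game-$C$ spin system is ergodic with unique stationary distribution $\pi$ independent of the (possibly $S$-concentrated) initial law, and by Remark~$(i)$ after Theorem~\ref{SLLN-thm-spin}, $\mu_C^N$ equals one number $\mu$ for every $N$. I would then repeat, verbatim in spin-system notation, the algebra of Theorem~\ref{p0=1,p3=0-thm}: from (\ref{mu3-spin}) and Corollary~\ref{symm-spin}, $\mu=\pi_{-1,1}(1,1)-\pi_{-1,1}(0,0)$, and from (\ref{mu2-spin}) with $p_1=p_2$ and the same symmetry, $\mu=\tfrac12\pi_{-1,1}(0,0)+(2p-1)\pi_{-1,1}(0,1)-\tfrac12\pi_{-1,1}(1,1)$; eliminating $\pi_{-1,1}(1,1)-\pi_{-1,1}(0,0)$ gives $\mu=\tfrac{2}{3}(2p-1)\pi_{-1,1}(0,1)$. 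Since $p>1/2$ and since an ergodic spin system with flip rates bounded away from $0$ and $1$ assigns positive probability to every cylinder event (here $\{x_{-1}=0,\,x_1=1\}$), we obtain $\mu_C^N=\mu>0$ for all $N$. Combining the three cases, $\mu_B^N\le 0$ and $\mu_C^N>0$ hold simultaneously exactly when $N$ is even (for odd $N$, $\mu_B^N$ does not exist, so the effect is absent), which is the asserted dichotomy.

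I expect the main obstacle to be the odd-$N$ non-existence claim, i.e.\ making rigorous that the time-average of $g(D(t))$ genuinely fails to converge. This rests on (i) identifying the defect motion as a bona fide recurrent symmetric walk despite the type-dependent (but bounded) jump rates, and (ii) upgrading the arcsine law — which only gives convergence in distribution to a nondegenerate limit — to the a.s.\ statement $\limsup>\liminf$ for the half-line occupation fraction; the null recurrence, forcing the window $\{1,\dots,N-1\}$ to have asymptotic density $0$, is what lets me discard the boundary layer where $g$ is not locally constant. A secondary point needing care is that the fictitious-jump profit rate for each player equals $2\bar f_i-1$ in this non-stationary, $S$-restricted regime; I would justify this by the PASTA property of the independent rate-$1$ play clocks, exactly as in the proof of Theorem~\ref{SLLN-thm-spin}.
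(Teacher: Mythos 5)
Your proposal is correct in substance and, on the two hardest steps, takes a genuinely different route from the paper; the shared skeleton (the defect bond performing a symmetric walk with alternating holding rates $2p_1$, $2q_1$, the even-$N$ cancellation, and the identity $\mu_C^N=\tfrac{2}{3}(2p_1-1)\pi_{-1,1}(0,1)$) coincides with the paper's. The differences are these. For odd $N$, the paper reduces to $N=1$ and runs an excursion decomposition: alternating passage times between $+2$ and $-2$, Lemma~\ref{asymptotic} to get the tail $\P(X\ge k)\sim Ck^{-1/2}$ of the Poissonized excursion profits, and Lemma~\ref{Berkes} (stable-law convergence, Kanter's positivity of stable densities, Kolmogorov's 0--1 law) to show the ratio of winning to losing sums has liminf $0$ and limsup $\infty$. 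You instead look at the half-line occupation fraction of the defect walk; as you note, the arcsine law is only distributional, but the upgrade you need is cheap: under finite permutations of the increments of the embedded walk the occupation fraction changes by $O(1/n)$, so its limsup and liminf are a.s.\ constants by Hewitt--Savage, and if they coincided the fraction would converge a.s.\ to a constant, contradicting the nondegenerate arcsine limit --- and $\limsup>\liminf$ already rules out existence of $\mu_B^N$ (full support of the arcsine law then even gives $\limsup=1$, $\liminf=0$). Your route avoids the stable-law machinery entirely; the paper's yields the sharper conclusion that the per-player average oscillates between $-1$ and $+1$ exactly, which your weaker statement does not need. For the window error term you use the qualitative fact that a null-recurrent walk spends asymptotic time-density $0$ in a finite set (i.i.d.\ excursion lengths of infinite mean plus the SLLN), where the paper invokes Kesten's LIL for local time; both work, yours being more elementary. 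Finally, for $\pi_{-1,1}(0,1)>0$ the paper argues inside the generator (if that marginal vanished, $\pi$ would be a mixture of the four periodic configurations, and three test functions yield an infeasible linear system), whereas you cite the general fact that a stationary measure of a spin system with flip rates bounded below charges every cylinder; since here all rates lie in $[1/4,3/4]$ and $c_i(\bm x)+c_i(\bm x^i)=1$, this is indeed provable by a graphical-representation argument (the last clock ring at each window site before time $1$ lands on the target value with probability at least $1/4$, uniformly in everything else), but it is the one ingredient not contained in the paper that you would have to prove or properly cite. The remaining loose ends you flag (martingale/PASTA control of the Poisson noise, and transferring the discrete-time occupation statement to continuous time with parity-dependent holding rates) are genuine but standard, and the paper faces and handles the analogous issues.
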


\begin{remark}
$(i)$ The quantities $\mu_B^N$ and $\mu_C^N$ have different meanings from those in Theorem~\ref{p0=1,p3=0-thm}.  They now represent mean profit per player per unit of time in the spin system model, whereas before they represented mean profit per turn (not per player) in the discrete-time Markov chain model.  Notice that the Parrondo effect fails for odd $N$ for a different reason here than it does in Theorem~\ref{p0=1,p3=0-thm}. 

$(ii)$ Under the assumptions of the theorem, the model has the following interpretation (stated in such a way that it can be generalized to higher dimensions):  Voters are located at the sites of ${\bf Z}$.  Each has one of two possible positions on a particular issue.  At exponential rate 1, each voter surveys his nearest neighbors and if there is a clear majority position on the issue among them, he adopts the opposite position.  If, however, there is an even split on the issue among his nearest neighbors, he tosses a biased coin to determine his position.  We might refer to the resulting spin system as the \textit{biased contrarian voter model}.
\end{remark}

We will need two lemmas in the proof.  We state them here to avoid interrupting the argument.

\begin{lemma}\label{asymptotic}
Let the random variable $T$ assume values in the set of even positive integers with $f(n):=\P(T=n)$ satisfying $f(n)\sim cn^{-\beta}$ as $n\to\infty$ through the even integers, for some constants $c>0$ and $\beta\in(1,2)$.  Let $\Delta_1,\Delta_2,\ldots$ be an independent sequence of exponential random variables with the odd-numbered terms having parameter $\lambda_1>0$ and the even-numbered terms having parameter $\lambda_2>0$, and let $\{N(t)\}$ be a rate 1 Poisson process.  Assume that $T$, $\{\Delta_n\}$, and $\{N(t)\}$ are independent, and define $X:=N(\Delta_1+\Delta_2+\cdots+\Delta_T)$.  Then $\P(X\ge k)\sim Ck^{-\beta+1}$ as $k\to\infty$ for some constant $C>0$.
\end{lemma}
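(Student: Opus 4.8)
The plan is to reduce $X$ to an ordinary random sum of i.i.d.\ summands whose \emph{number} is heavy-tailed, and then transfer the tail of the summation index to $X$. First I would decompose the Poisson count over the successive $\Delta$-intervals: setting $Z_i:=N(\Delta_1+\cdots+\Delta_i)-N(\Delta_1+\cdots+\Delta_{i-1})$, one has $X=\sum_{i=1}^T Z_i$, and conditionally on $(\Delta_1,\Delta_2,\ldots)$ the $Z_i$ are independent, with $Z_i$ Poisson of mean $\Delta_i$. Since a Poisson$(\Delta)$ variable mixed over $\Delta\sim\mathrm{Exp}(\lambda)$ is geometric on $\{0,1,2,\ldots\}$ with $\P(Z=j)=\lambda(1+\lambda)^{-(j+1)}$ and mean $1/\lambda$, the $Z_i$ are unconditionally independent geometric variables whose parameter alternates between $\lambda_1$ (odd $i$) and $\lambda_2$ (even $i$), and they are independent of $T$. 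Because $T$ is always even, I would then pair consecutive terms, $V_j:=Z_{2j-1}+Z_{2j}$, so that $V_1,V_2,\ldots$ are i.i.d.\ nonnegative integer variables with mean $b:=1/\lambda_1+1/\lambda_2>0$, finite variance, and a finite moment generating function near the origin, and $X=\sum_{j=1}^M V_j$ with $M:=T/2$ independent of $\{V_j\}$.

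Next I would record the tail of the index. From $\P(M=m)=f(2m)\sim c\,2^{-\beta}m^{-\beta}$ and summation one gets $\P(M\ge m)\sim \frac{c\,2^{-\beta}}{\beta-1}\,m^{-(\beta-1)}$, which is regularly varying of index $-(\beta-1)$ with $\beta-1\in(0,1)$. The heart of the matter is the tail-transfer relation $\P(X\ge k)\sim\P(M\ge k/b)$, which I would establish with matching bounds built on the law of large numbers for $S_m:=V_1+\cdots+V_m$. For the lower bound, fix $\epsilon>0$ and use $S_M\ge S_{\lceil(1+\epsilon)k/b\rceil}$ on $\{M\ge(1+\epsilon)k/b\}$, together with independence of $M$ and $\{V_j\}$, to get $\P(X\ge k)\ge\P(M\ge(1+\epsilon)k/b)\,\P(S_{\lceil(1+\epsilon)k/b\rceil}\ge k)$; the second factor tends to $1$ by Chebyshev (its mean exceeds $k$ by order $\epsilon k$ while its variance is of order $k$), and dividing by $\P(M\ge k/b)$ and using regular variation yields a $\liminf$ of at least $(1+\epsilon)^{-(\beta-1)}$.

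For the upper bound, split on $\{M\ge(1-\epsilon)k/b\}$ and its complement. The first piece is at most $\P(M\ge(1-\epsilon)k/b)$, contributing $(1-\epsilon)^{-(\beta-1)}$ in the limit. On $\{M\le\lfloor(1-\epsilon)k/b\rfloor\}$, monotonicity of $S$ gives $X\le S_{\lfloor(1-\epsilon)k/b\rfloor}$, so this piece is bounded by $\P(S_{\lfloor(1-\epsilon)k/b\rfloor}\ge k)$, which is a deviation of order $\epsilon k$ above a mean of order $(1-\epsilon)k$. This is where the main work lies: I would invoke a Chernoff bound, using the finite exponential moment of $V_1$ and the expansion $\log\E e^{\theta V_1}=\theta b+O(\theta^2)$ for small $\theta>0$, to show this probability is at most $e^{-\gamma k}$ for some $\gamma>0$, hence $o(\P(M\ge k/b))$. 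Letting $\epsilon\to0$ then gives $\limsup\le1$, completing the relation $\P(X\ge k)\sim\P(M\ge k/b)$, and combining with the regular-variation asymptotics yields $\P(X\ge k)\sim C\,k^{-\beta+1}$ with $C=\frac{c\,2^{-\beta}b^{\beta-1}}{\beta-1}>0$. The main obstacle is precisely this upper-bound large-deviation estimate: I must rule out the scenario in which few but atypically large summands produce a large value of $X$, and this is exactly where the geometric (hence exponentially light) nature of the $V_j$ is essential.
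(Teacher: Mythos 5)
Your proposal is correct and takes essentially the same approach as the paper: the identical reduction to a sum of i.i.d.\ geometric-pair summands $V_j$ indexed by the independent heavy-tailed count $M=T/2$, followed by the same law-of-large-numbers tail-transfer argument, arriving at the same constant $C=c\,2^{-\beta}b^{\beta-1}/(\beta-1)$. The only differences are technical rather than conceptual: the paper splits at a shrinking window $\mu^{-1}k\pm k^{\gamma}$ (with $\gamma\in(\max(1/2,\beta/2),1)$) and uses Chebyshev on both sides, whereas you use a fixed relative window $(1\pm\epsilon)k/b$ with regular variation and a Chernoff bound for the upper piece---and in fact Chebyshev would also suffice there, since it gives $O(1/(\epsilon^{2}k))=o(k^{-(\beta-1)})$ because $\beta-1<1$, so the exponential lightness of the $V_j$ that you flag as essential is not actually needed.
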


\begin{proof}
Letting $\{N_j(t)\}$ be independent rate 1 Poisson processes $(j\ge1)$, we find by conditioning that
\begin{eqnarray*}
&&\!\!\!\!\!\P(X\ge k\mid T=2n)\\
&&\;{}=\P(N(\Delta_1+\Delta_2+\cdots+\Delta_{2n})\ge k)\\
&&\;{}=\P\bigg(\sum_{j=1}^{2n}[N(\Delta_1+\cdots+\Delta_j)-N(\Delta_1+\cdots+\Delta_{j-1})]\ge k)\\
&&\;{}=\P\bigg(\sum_{j=1}^n [N_{2j-1}(\Delta_{2j-1})+N_{2j}(\Delta_{2j})]\ge k\bigg),\qquad k\ge0,\;n\ge1.
\end{eqnarray*}
Now if $\Delta$ is exponential with parameter $\lambda$ and independent of the rate 1 Poisson process $\{N(t)\}$, we have
\begin{eqnarray*}
P(N(\Delta)=m)&=&\int_0^\infty {t^m\over m!}e^{-t}\lambda e^{-\lambda t}\,dt\\
&=&{\lambda\over m!}\int_0^\infty t^m e^{-(1+\lambda)t}\,dt=\bigg({\lambda\over1+\lambda}\bigg)\bigg({1\over1+\lambda}\bigg)^m,\qquad m\ge0,
\end{eqnarray*}
which is (nonnegative) geometric$(\lambda/(1+\lambda))$.  We conclude that
$$
P(X\ge k)=\sum_{n=1}^\infty f(2n)\P(X\ge k\mid T=2n)=\sum_{n=1}^\infty f(2n)\P(S_n\ge k),
$$
where $S_n:=X_1+X_2+\cdots+X_n$ and $X_1,X_2,\ldots$ are i.i.d.\ with common distribution equal to that of the sum of two independent random variables, one geometric$(\lambda_1/(1+\lambda_1))$ and the other geometric$(\lambda_2/(1+\lambda_2))$.  In particular,
$X_1$ has mean and variance
$$
\mu:={1\over\lambda_1}+{1\over\lambda_2}\quad\text{and}\quad\sigma^2:={1+\lambda_1\over\lambda_1^2}+{1+\lambda_2\over\lambda_2^2}.
$$

Let $\alpha_k:=k^\gamma$, where $\gamma\in(1/2,1)$, and observe that
\begin{eqnarray*}
&&\sum_{n\ge \lceil\mu^{-1}k+\alpha_k\rceil}f(2n)\P(S_{\lceil\mu^{-1}k+\alpha_k\rceil}\ge k)\\
&&\qquad{}\le \sum_{n\ge1}f(2n)\P(S_n\ge k)\le\P(S_{\lfloor\mu^{-1}k-\alpha_k\rfloor}\ge k)+\sum_{n\ge \lfloor\mu^{-1}k-\alpha_k\rfloor}f(2n).
\end{eqnarray*}
We can estimate these probabilities using Chebyshev's inequality:
\begin{eqnarray*}
&&\P(S_{\lceil\mu^{-1}k+\alpha_k\rceil}\ge k)\\
&&\qquad{}=1-\P(S_{\lceil\mu^{-1}k+\alpha_k\rceil}-{\lceil\mu^{-1}k+\alpha_k\rceil}\mu<k-{\lceil\mu^{-1}k+\alpha_k\rceil}\mu)\\
&&\qquad{}\ge1-\P(|S_{\lceil\mu^{-1}k+\alpha_k\rceil}-{\lceil\mu^{-1}k+\alpha_k\rceil}\mu|>|k-{\lceil\mu^{-1}k+\alpha_k\rceil}\mu|)\\
&&\qquad{}\ge1-{\sigma^2\lceil\mu^{-1}k+\alpha_k\rceil\over(k-{\lceil\mu^{-1}k+\alpha_k\rceil}\mu)^2}\\
&&\qquad{}=1-O(k^{-2\gamma+1})=1-o(1),
\end{eqnarray*}
and, if also $\gamma>\beta/2$,
\begin{eqnarray*}
&&\P(S_{\lfloor\mu^{-1}k-\alpha_k\rfloor}\ge k)\\
&&\qquad{}=\P(S_{\lfloor\mu^{-1}k-\alpha_k\rfloor}-{\lfloor\mu^{-1}k-\alpha_k\rfloor}\mu\ge k-{\lfloor\mu^{-1}k-\alpha_k\rfloor}\mu)\\
&&\qquad{}\le\P(|S_{\lfloor\mu^{-1}k-\alpha_k\rfloor}-{\lfloor\mu^{-1}k-\alpha_k\rfloor}\mu|\ge |k-{\lfloor\mu^{-1}k-\alpha_k\rfloor}\mu|)\\
&&\qquad{}\le{\sigma^2\lfloor\mu^{-1}k-\alpha_k\rfloor\over(k-{\lfloor\mu^{-1}k-\alpha_k\rfloor}\mu)^2}\\
&&\qquad{}=O(k^{-2\gamma+1})=o(k^{-\beta+1}).
\end{eqnarray*}
Finally, using
\begin{eqnarray*}
{2^{-\beta}m^{-\beta+1}\over\beta-1}&=&\int_m^\infty(2x)^{-\beta}dx\\
&\le&\sum_{n\ge m}(2n)^{-\beta}\le\int_{m-1}^\infty(2x)^{-\beta}dx={2^{-\beta}(m-1)^{-\beta+1}\over\beta-1},
\end{eqnarray*}
we find that, given $\varepsilon>0$, we have for sufficiently large $k$,
\begin{eqnarray*}
&&{(1-\varepsilon)c\,2^{-\beta}(\lceil\mu^{-1}k+\alpha_k\rceil)^{-\beta+1}\over(\beta-1)k^{-\beta+1}}(1-o(1))\\
&&\qquad{}\le {\P(X\ge k)\over k^{-\beta+1}}\le o(1)+{(1+\varepsilon)c\,2^{-\beta}(\lfloor\mu^{-1}k-\alpha_k\rfloor-1)^{-\beta+1}\over(\beta-1)k^{-\beta+1}},
\end{eqnarray*}
and our conclusion follows with $C:=c\,2^{-\beta}\mu^{\beta-1}/(\beta-1)$.
\end{proof}

\begin{lemma}\label{Berkes}
Let $X_1,X_2,\ldots$ and $X_1',X_2',\ldots$ be two sequences of random variables, all of which are nonnegative and i.i.d., with $\P(X_1\ge t)\sim Ct^{-1/2}$ as $t\to\infty$ for some $C>0$.  For each $n\ge1$, let $S_n := X_1+\cdots+X_n$ and $S_n':=X_1'+\cdots+X_n'$.  Then $0=\liminf_{n\to\infty}S_n/S_n'<\limsup_{n\to\infty}S_n/S_n'=\infty$ {\rm a.s.}
\end{lemma}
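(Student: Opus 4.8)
The plan is to prove the single one-sided statement $\limsup_{n\to\infty}S_n/S_n'=\infty$ a.s.; the companion statement $\liminf_{n\to\infty}S_n/S_n'=0$ a.s.\ then follows by interchanging the two (independent, identically distributed) sequences, since $S_n'/S_n$ is the same functional of the swapped data. It is convenient first to record two consequences of the hypothesis $\P(X_1\ge t)\sim Ct^{-1/2}$: the summands have infinite mean and lie in the domain of attraction of a one-sided stable law $V$ of index $1/2$, so that $S_n/n^2\Rightarrow V$ (and likewise for the primed sequence), and there is a uniform upper tail bound $\P(S_n'>t\,n^2)\le c\,t^{-1/2}$ valid for every $n$ and all large $t$. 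The latter is obtained by the usual one-big-jump splitting: a union bound over the $n$ summands controls the event that a single term exceeds $tn^2/2$, and a truncated first-moment/Markov estimate controls the remainder.

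Fix $A\in(0,\infty)$; the goal is $\P(S_n>AS_n'\text{ i.o.})=1$, after which intersecting over $A\in\{1,2,3,\dots\}$ yields $\limsup_{n\to\infty}S_n/S_n'=\infty$ a.s. I would choose a rapidly increasing block sequence $n_0<n_1<n_2<\cdots$ with $n_k/n_{k-1}\to\infty$ fast enough that $\sum_k n_{k-1}/n_k<\infty$ (for instance $n_k=2^{2^k}$), and write $I_k:=(n_{k-1},n_k]$, $M_k:=\max_{i\in I_k}X_i$, and $B_k':=\sum_{i\in I_k}X_i'$. Since $S_{n_k}\ge M_k$, it suffices to produce infinitely many $k$ with $M_k>AS_{n_k}'$.

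The heart of the argument is a two-level Borel--Cantelli scheme. First I would show that almost surely there are infinitely many $k$ with $S_{n_k}'\le L\,n_k^2$ for a fixed large $L$. For this, observe that the events $\{B_k'\le\tfrac12 L\,n_k^2\}$ are independent over $k$ (disjoint primed blocks) with probabilities tending to $\P(V\le\tfrac12 L)$, which exceeds $\tfrac34$ once $L$ is large, so they occur infinitely often by the second Borel--Cantelli lemma; meanwhile $\P(S_{n_{k-1}}'>\tfrac12 L\,n_k^2)\le c\,(\tfrac12 L)^{-1/2}(n_{k-1}/n_k)$ by the uniform tail bound, which is summable by the choice of blocks, so $S_{n_{k-1}}'\le\tfrac12 L\,n_k^2$ for all large $k$ by the first Borel--Cantelli lemma. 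Adding the two estimates gives $S_{n_k}'\le L\,n_k^2$ infinitely often. Crucially, the set $\mathcal K:=\{k:S_{n_k}'\le L\,n_k^2\}$ is measurable with respect to the primed sequence alone. Conditioning on the entire primed sequence, the events $\{M_k>AS_{n_k}'\}$ depend on the disjoint unprimed blocks $I_k$ and are therefore conditionally independent; for $k\in\mathcal K$ one estimates, using $1-(1-p)^{|I_k|}\ge 1-e^{-|I_k|p}$ together with the tail asymptotic, that $\P(M_k>AS_{n_k}'\mid\text{primed})\ge 1-\exp(-\tfrac{C}{4}(AL)^{-1/2})=:\rho>0$ for all large $k$. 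Hence the conditional probabilities sum to $\infty$ along $\mathcal K$, and the conditional (Lévy) form of the second Borel--Cantelli lemma gives $M_k>AS_{n_k}'$ for infinitely many $k$, almost surely.

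The main obstacle is the middle step---guaranteeing that $S_{n_k}'$ is only of the typical order $n_k^2$ infinitely often, despite the heavy tails that force $\limsup_n S_n'/n^2=\infty$ a.s. The block decomposition together with the uniform tail bound is exactly what tames this: super-exponential blocks make the prehistory $S_{n_{k-1}}'$ negligible against $n_k^2$ with summable failure probability, while the fresh block sums supply the independence needed for the lower-bound half. The remaining bookkeeping (continuity points of the stable limit law, absorbing constants into $\rho$, and verifying the one-big-jump tail estimate) is routine.
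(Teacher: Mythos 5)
Your proof is correct, but it takes a genuinely different route from the paper. The paper's argument is soft: by Kolmogorov's 0--1 law, $\limsup_n S_n/S_n'$ is an a.s.\ constant $A$ and $\liminf_n S_n/S_n'=1/A$; then it invokes the stable limit theorem ($S_n/n^2\to_d V$, one-sided stable of index $1/2$), Kanter's lemma that the stable density is positive, and convergence of the pair to conclude $S_n/S_n'\to_d V/V'$, whose distribution has unbounded support, so $\P(S_n/S_n'>K\ \text{i.o.})>0$ for every $K$ and hence $A=\infty$. You instead run a hands-on blocking scheme: a uniform one-big-jump tail bound $\P(S_n>tn^2)\le ct^{-1/2}$, super-exponential blocks $n_k=2^{2^k}$ to make the prehistory $S_{n_{k-1}}'$ negligible, a first/second Borel--Cantelli combination to find infinitely many $k$ with $S_{n_k}'\le Ln_k^2$, and then, conditioning on the primed sequence, a conditional second Borel--Cantelli over the disjoint unprimed blocks to produce a single huge summand $M_k>AS_{n_k}'$ infinitely often; the $\liminf=0$ half follows by swapping the sequences. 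What each buys: the paper's proof is much shorter and cleanly identifies $\liminf$ as the reciprocal of $\limsup$, but it leans on nontrivial external inputs (domain-of-attraction theory and Kanter's positivity result). Your proof is longer but more elementary and more robust --- it never needs positivity of the stable density (indeed the weak-convergence step you cite could be replaced by tightness, which your own tail bound already gives), and it would survive weakening the hypothesis from the exact asymptotic $\P(X_1\ge t)\sim Ct^{-1/2}$ to two-sided regularly varying bounds, where the paper's ratio-of-stable-laws argument is not directly available. The only points to write out carefully are the ones you flagged as routine: the continuity-point issue in $\P(B_k'\le \tfrac12 Ln_k^2)\to\P(V\le L/2)$, and the statement of the conditional Borel--Cantelli lemma you apply along the random (but primed-measurable) index set $\mathcal K$.
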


\begin{remark}
We do not know whether the assumption that $\P(X_1\ge t)\sim Ct^{-1/2}$ as $t\to\infty$ for some $C>0$ can be weakened to $E[X_1]=\infty$.
\end{remark}

\begin{proof}
First, by Kolmogorov's 0-1 law, there is a constant $A$ such that \linebreak $\limsup_{n\to\infty}S_n/S_n'=A$ a.s.  It follows that $\liminf_{n\to\infty}S_n/S_n'=1/A$ a.s., where $1/\infty:=0$.  Therefore, $A\ge1$ and it suffices to prove that $A=\infty$.

The distribution of $X_1$ is in the domain of normal attraction of the one-sided stable law of index $\alpha=1/2$ by the concluding remark of Feller (1971, p.\ 581).  This means that $S_n/n^2$ converges in distribution to the one-sided stable law of index $\alpha=1/2$.  (When the index $\alpha$ is less than 1, there is no need for centering.)  Since $X_1\ge0$, all that is needed to verify this is to show that $t^{1/2}\P(X_1\ge t)\to C$ as $t\to\infty$, which is what we assumed.  (Here $p=1$ and $q=0$ in Feller's formulation.)  The concluding remark ultimately depends on Feller's (1971, p.\ 578) Corollary~2, but notice that ``exponent $\alpha$'' in its statement should read ``exponent $-\alpha$''.  

Now the limit distribution has a positive density by Lemma~4.1 of Kanter (1975).  So we have $S_n/n^2\to_d V$ and $S_n'/n^2\to_d V'$, with $V$ and $V'$ i.i.d.  Hence $(S_n/n^2,S_n'/n^2)\to_d (V,V')$, implying $S_n/S_n'\to_d V/V'$.  But $V/V'$ also has a positive density, so given $K\ge1$, we have $\limsup_{n\to\infty}\P(S_n/S_n'>K)\ge\P(V/V'>K)>0$.  Hence $\P(S_n/S_n'>K\;{\rm i.o.})=\P(\limsup_{n\to\infty}\{S_n/S_n'>K\})\ge\limsup_{n\to\infty}\P(S_n/S_n'>K)>0$.  By Kolmogorov's 0-1 law, we have $\P(S_n/S_n'>K\;{\rm i.o.})=1$, hence $\P(\limsup_{n\to\infty}S_n/S_n'\ge K)=1$.  Since $K$ was arbitrary, $A=\infty$.
\end{proof}

\begin{proof}[Proof of Theorem~\ref{p0=1,p3=0-thm-spin}]
First, we attempt to compute $\mu_B^N$ assuming only $p_0=1$, $p_1=p_2\in(0,1)$, and $p_3=0$.

There are two absorbing states, the two states with alternating 0s and 1s.  Motivated by the proof of Theorem~\ref{p0=1,p3=0-thm}, let us assume that the initial distribution is concentrated on $S$, defined at the beginning of this section.  Let us label the states of $S$ as follows.  A state with a pair of adjacent 0s is labeled by $(0,i)$ if the leftmost 0 of the pair occurs at site $i$.  A state with a pair of adjacent 1s is labeled by $(1,i)$ if the leftmost 1 of the pair occurs at site $i$.  Then the spin system is a countable-state continuous-time Markov chain in $S$ with the following transition rates.  From $(0,i)$ the process jumps to $(1,i+1)$ at rate $p_1$ and to $(1,i-1)$ at rate $p_2$.  From $(1,i)$ it jumps to $(0,i+1)$ at rate $q_2$ and to $(0,i-1)$ at rate $q_1$.  There are two closed irreducible sets, $S^0$ consisting of states of the form $(0,\text{even})$ and $(1,\text{odd})$ and $S^1$ consisting of states of the form $(0,\text{odd})$ and $(1,\text{even})$.  The process behaves as a simple random walk with jump rates that are periodic with period 2.  From even states (i.e., those whose second component is even) in $S^0$ and odd states in $S^1$ it jumps to the right at rate $p_1$ and to the left at rate $p_2$; from odd states in $S^0$ and even states in $S^1$ it jumps to the right at rate $q_2$ and to the left at rate $q_1$.  If $p_1=p_2$, then $q_1=q_2$ and the walk is a simple symmetric random walk, but with jump rates $2p_1$ at even states in $S^0$ and odd states in $S^1$ and $2q_1$ at odd states in $S^0$ and even states in $S^1$.  (The notion of periodicity here is different from the one in the setting of discrete-time Markov chains; it merely means that the jump rates for our simple symmetric random walk are not spatially homogeneous but depend on the parity of the state.)

Let us consider what happens to players $1,2,\ldots,N$ with $N$ even.  Odd-numbered players will win every game they play and even-numbered players will lose every game they play if $(x_0,x_1,\ldots,x_N,x_{N+1})=(0,1,\ldots,0,1)$, hence if the walk starts in $S^0$ and its second component $i$ satisfies $i\ge N+1$, or if it starts in $S^1$ and $i\le-1$.  Odd-numbered players will lose every game they play and even-numbered players will win every game they play if $(x_0,x_1,\ldots,x_N,x_{N+1})=(1,0,\ldots,1,0)$, hence if the walk starts in $S^1$ and $i\ge N+1$ or if it starts in $S^0$ and $i\le-1$.  The result is that, on excursions of the random walk away from $0,1,\ldots,N$, either odd-numbered players among $1,2,\ldots,N$ will win every game they play and even-numbered players will lose every game they play, or the opposite will occur.  In any case the sequence of wins and losses to players $1,2,\ldots,N$ during one of these excursions can be modeled as a compound Poisson process.  Indeed, let $S_n:=\xi_1+\cdots+\xi_n$, where $\xi_1,\xi_2,\ldots$ are i.i.d.\ with $\P(\xi_1=1)=\P(\xi_1=-1)=1/2$, and let $N(t)$ be an independent, rate $N$, Poisson process.  Then $S_{N(t)}$ represents the cumulative profit process to the ensemble of $N$ players, since each player plays at rate 1.  It follows that $\lim_{t\to\infty}t^{-1}S_{N(t)}=0$ a.s.  To conclude that $\mu_B^N=0$, it suffices to show that the error incurred by using $t^{-1}S_{N(t)}$ instead of the actual $t^{-1}\hat S_t$ is negligible in an almost sure sense.  The error is bounded by 2$t^{-1}$ times the number of jumps by the Poisson process up to time $t$ and during the time in which the continuous-time random walk is visiting $\{0,1,\ldots,N\}$.  If we denote the set of such times by $I_t$, then our bound is
\begin{eqnarray*}
&&2t^{-1}(\text{number of jumps by $\{N(t)\}$ during $I_t$})\\
&&\qquad\qquad{}=2\,{\text{number of jumps by $\{N(t)\}$ during $I_t$}\over|I_t|}\,{|I_t|\over t}.
\end{eqnarray*}
The first fraction converges to $N$ a.s., while the second tends to 0 a.s.  The latter assertion follows from Kesten's (1965) law of the iterated logarithm for the local time of simple symmetric random walk in discrete time.  Let $L(n,i):=|\{1\le k\le n: S_k=i\}|$ and $L(n)=\sup_{i\in{\bf Z}} L(n,i)$.  Then $\limsup_{n\to\infty}L(n)/\sqrt{\,2n\log\log n}=1$ a.s.  The analogous result holds for simple symmetric random walks in continuous time by the SLLN for i.i.d.\ exponential random variables, and this suffices for our simple symmetric random walk with alternating jump rates, since the ratio $\max(p_1,q_1)/\min(p_1,q_1)$ is finite.  We conclude finally that $\mu_B^N=0$.

Next we consider the case of $N$ odd.  If also $N\ge3$, let $\hat S_t^{1,\ldots,N-1}$ and $\hat S_t^N$ denote the cumulative profit per player to players $1,\ldots,N-1$ and to player $N$, up to time $t$.  The previous paragraph tells us that $\lim_{t\to\infty}t^{-1}\hat S_t^{1,\ldots,N-1}=0$ a.s.  It is therefore enough to consider $\lim_{t\to\infty}t^{-1}\hat S_t^N$.  Equivalently, we can assume without loss of generality that $N=1$, so we focus on player 1.

Player 1 will win every game he plays if $(x_0,x_1,x_2)=(0,1,0)$, hence if the walk starts in $S^0$ and its second component $i$ satisfies $i\ge 2$, or if it starts in $S^1$ and $i\le-1$; player 1 will lose every game he plays if $(x_0,x_1,x_2)=(1,0,1)$, hence if the walk starts in $S^1$ and $i\ge 2$ or if it starts in $S^0$ and $i\le-1$.  The result is that, on excursions of the random walk away from $\{0,1\}$, either player 1 will win every game he plays or player 1 will lose every game he plays.  Define $0=\tau_0\le\tau_1<\tau_2<\tau_3<\cdots$ in terms of the discrete-time simple symmetric random walk as follows.  $\tau_1$ is the first time the walk visits $2$;  $\tau_2$ is the next time the walk visits $-2$; $\tau_3$ is the next time the walk visits $2$; $\tau_4$ is the next time the walk visits $-2$; and so on.  Then $T_k:=\tau_k-\tau_{k-1}$ are independent for $k\ge1$ and identically distributed for $k\ge2$, assuming values in the set of even positive integers; their well-known (Feller 1968, p.\ 89) distribution $f(n):=\P(T_2=n)$ satisfies $f(n)\sim cn^{-3/2}$ as $n\to\infty$ through the even integers for some $c>0$, so Lemma~\ref{asymptotic} applies.  To get player 1's profit, we must convert these discrete-time excursions to continuous time and then run a Poisson process with rate 1 over those intervals, alternating between always winning and always losing.  With $\hat S_t$ denoting the cumulative profit to player 1 up to time $t$, we claim that $-1=\liminf_{t\to\infty}t^{-1}\hat S_t<\limsup_{t\to\infty}t^{-1}\hat S_t=1$ a.s., so that, with probability 1, the limit does not exist and hence $\mu_B^N$ does not exist.  To prove this we need only consider a subsequence of times that achieves these limits.  Using the result of Kesten described above, we can assume without loss of generality that on $[\tau_1,\tau_2)$, $[\tau_3,\tau_4)$, and so on, player 1 always wins, while on $[\tau_2,\tau_3)$, $[\tau_4,\tau_5)$, and so on, player 1 always loses.  (This assumes the walk starts in $S^0$; if it starts in $S^1$, the opposite occurs.)  We also assume, again without loss of generality, that $\tau_1=\tau_0=0$, that is, we ignore the time before the initial excursion.  Let $X_1,X_2,\ldots$ be the profits of player 1 accumulated during the winning excursions, and let $X_1',X_2',\ldots$ be the losses of player 1 accumulated during the losing excursions.  The two sequences of random variables are i.i.d.\ and distributed as $X$ in Lemma~\ref{asymptotic} with $\beta=3/2$, $\lambda_1=2p_1$, and $\lambda_2=2q_1$.  Denote partial sums by $S_n:=X_1+\cdots+X_n$ and $S_n':=X_1'+\cdots+X_n'$ for each $n\ge1$.  Then, along a particular subsequence of times, $t^{-1}\hat S_t$ is asymptotic to
$$
{X_1-X_1'+X_2-X_2'+\cdots+X_n-X_n'\over X_1+X_1'+X_2+X_2'+\cdots+X_n+X_n'}={S_n-S_n'\over S_n+S_n'}={S_n/S_n'-1\over S_n/S_n'+1}.
$$
By Lemma~\ref{Berkes}, this has limit inferior equal to $-1$ a.s. and limit superior equal to 1 a.s., as required.

This proves the assertions about $\mu_B^N$.  It remains to show that $\mu_C^N>0$ when $p_1=p_2\in(1/2,1)$.

Now $\mu_C^N=\mu(3/4,(1/2+p_1)/2,(1/2+p_1)/2,1/4)$, which by Corollary~\ref{couple-spin} is 0 at $p_1=1/2$.  If we could show that this function is increasing in $p_1$, the proof would be complete.  However, this monotonicity property appears difficult to prove.  As in the proof of Theorem~\ref{p0=1,p3=0-thm}, we take an alternative approach.

Let $\pi_{-1,1}$ be the $-1,1$ two-dimensional marginal of the stationary distribution $\pi$ when the probability parameters are $3/4$, $(1/2+p_1)/2$, $(1/2+p_1)/2$, and $1/4$.  We apply Theorem~\ref{SLLN-thm-spin} twice.  By (\ref{mu3-spin}) and Corollary~\ref{symm-spin},
\begin{eqnarray}\label{z-x}
\mu_C^N&=&\pi_0(1)-\pi_0(0)=\pi_{-1}(1)-\pi_{-1}(0)\\
&=&\pi_{-1,1}(1,0)+\pi_{-1,1}(1,1)-[\pi_{-1,1}(0,0)+\pi_{-1,1}(0,1)]\nonumber\\
&=&\pi_{-1,1}(1,1)-\pi_{-1,1}(0,0).\nonumber
\end{eqnarray}
By (\ref{mu2-spin}), Corollary~\ref{symm-spin}, and (\ref{z-x}),
\begin{eqnarray*}
\mu_C^N&=&\pi_{-1,1}(0,0)(1/2)+\pi_{-1,1}(0,1)(2p_1-1)+\pi_{-1,1}(1,1)(-1/2)\\
&=&(2p_1-1)\pi_{-1,1}(0,1)-(1/2)\mu_C^N.
\end{eqnarray*}
Therefore, $\mu_C^N=(2/3)(2p_1-1)\pi_{-1,1}(0,1)$.  

We claim that $\pi_{-1,1}(0,1)>0$.  Suppose not.  Then, by Corollary~\ref{symm-spin}, $\pi_{-1,1}(0,1)=\pi_{-1,1}(1,0)=0$, so by Lemma~\ref{invariance-spin}, $\pi$ is concentrated on the four states that have the same spin at every odd site and the same spin at every even site.  Let us denote those four states by $\bm 0$ (all 0s), $\bm 1$ (all 1s), $\bm1_{\text{even}}$ (1s at even sites only), and $\bm1_{\text{odd}}$ (1s at odd sites only).  Then there exist nonnegative $a_0,a_1,a_2,a_3$ with $a_0+a_1+a_2+a_3=1$ such that $\pi=a_0\delta_{\bm 0}+a_1\delta_{\bm 1}+a_2\delta_{\bm1_{\text{even}}}+a_3\delta_{\bm1_{\text{odd}}}$.  But this leads to a contradiction.  We apply $\int\mathscr{L}f\,d\pi=0$ with $f(\bm x):=x_0$, $f(\bm x):=x_{-1}x_1$, and $f(\bm x):=x_0x_2$, to get four linear equations in $a_0$, $a_1$, $a_2$, and $a_3$, namely
$3a_0-3a_1-a_2+a_3=0$, 
$-6a_1-2a_3=0$, 
$-6a_1-2a_2=0$, and
$a_0+a_1+a_2+a_3=1$, 
for which there is no solution with $a_0,a_1,a_2,a_3\ge0$.  Since $p_1>1/2$, we conclude that $\mu_C^N>0$.
\end{proof}

\section{Open problems}\label{open}

Suppose, for example, that $(p_0,p_1,p_2,p_3)=(1/10,3/5,3/5,3/4)$.  Then the spin system on ${\bf Z}$ with flip rates (\ref{rates}) is attractive, so by Theorem~\ref{ergodicity}, it is ergodic.  (In fact, conditions (c) and (d) of Theorem~\ref{ergodicity} also apply.)  Hence by Theorem~\ref{lim mu^N} we know that $\lim_{N\to\infty}\mu_B^N$ and $\lim_{N\to\infty}\mu_C^N$ exist.  We have computed these means exactly for $3\le N\le 19$ (Ethier and Lee 2012) and have found that the former has stabilized to 4 significant digits by $N=19$, while the latter has stabilized to 11 significant digits by $N=19$.  In fact, in both cases, the 17 computed numbers are monotonically increasing (why?).  Also, $\mu_B^N<-1/500$ for $3\le N\le19$ and $\mu_C^N>1/100$ for $4\le N\le19$.  Can we say anything about the limits, based on these computations?  Can we conclude that the Parrondo effect is present for all $N\ge4$?  Or that it is present in the spin system?

The monotonicity problem mentioned in the proof of Theorem~\ref{p0=1,p3=0-thm} is interesting.  More generally, assuming that the Markov chain is ergodic, let $\mu^N(p_0,p_1,p_2,p_3)$ be the mean parameter of Theorem~\ref{SLLN-thm}.  Is this function monotonically increasing in each variable?  A partial result in this direction is that it is nondecreasing in each variable if we restrict to parameter vectors satisfying the attractiveness condition, $p_0\le \min(p_1,p_2)\le \max(p_1,p_2)\le p_3$.  (This follows from Theorem~III.1.5 of Liggett 1985.)  Of course, the same question can be asked about the limiting mean function.

Can ergodicity of the spin system be established more generally?  For example, is it sufficient that $p_0,p_1,p_2,p_3\in(0,1)$?  It seems likely that Theorem~\ref{ergodicity} can be strengthened, perhaps via another form of duality.

We have considered only the one-dimensional integer lattice.  Mihailovi\'c and Rajkovi\'c (2006) studied the two-dimensional $MN$-player model with periodic boundary conditions, assuming that the bias of the coin tossed depends only on the number (not the set) of winners among the four nearest neighbors.  Using computer simulation they found evidence of the Parrondo effect for parameters close to those of the voter model.  What can be proved in dimensions greater than one?

\section*{Acknowledgments}

The authors thank Istv\'an Berkes for Lemma~\ref{Berkes}.

The research for this paper was carried out during S.N.E.'s visit to the Department of Statistics at Yeungnam University in 2011--12.  He is grateful to his hosts for their hospitality.


\begin{thebibliography}{00}

\bibitem{A}\textsc{Abbott}, D. (2010) Asymmetry and disorder: A decade of  Parrondo's paradox. \textit{Fluct. Noise Lett.} \textbf{9} (1) 129--156.

\bibitem{EK}\textsc{Ethier}, S. N. and \textsc{Kurtz}, T. G. (1986) \textit{Markov Processes: Characterization and Convergence}.  Wiley, New York.

\bibitem{EL1}\textsc{Ethier}, S. N. and \textsc{Lee}, J. (2009) Limit theorems for Parrondo's paradox. \textit{Electron. J. Probab.} {\bf 14} (62) 1827--1862. 

\bibitem{EL2}\textsc{Ethier}, S. N. and \textsc{Lee}, J. (2012) Parrondo games with spatial dependence. \textit{Fluct. Noise Lett.} \textbf{11}, to appear.  arXiv:1202.2609.

\bibitem{F1}\textsc{Feller}, W. (1968) \textit{An Introduction to Probability Theory and Its Applications, Vol. I}, third edition.  Wiley, New York.

\bibitem{F2}\textsc{Feller}, W. (1971) \textit{An Introduction to Probability Theory and Its Applications, Vol. II}, second edition.  Wiley, New York.

\bibitem{G}\textsc{Gray}, L. F. (1982) The positive rates problem for attractive nearest neighbor spin systems on ${\bf Z}$.  \textit{Z. Wahrsch. Verw. Gebiete} \textbf{61} (3) 389--404.

\bibitem{HA}\textsc{Harmer}, G. P. and \textsc{Abbott}, D. (2002) A review of Parrondo's paradox. \textit{Fluct. Noise Lett.} \textbf{2} (2) R71--R107.

\bibitem{Ka}\textsc{Kanter}, M. (1975) Stable densities under change of scale and total variation inequalities.  \textit{Ann. Probab.} \textbf{3} (4) 697--707.

\bibitem{Ke}\textsc{Kesten}, H. (1965) An iterated logarithm law for local time. \textit{Duke Math. J.} \textbf{32} (3) 447--456.

\bibitem{L}\textsc{Liggett}, T. M. (1985) \textit{Interacting Particle Systems}. Springer-Verlag, New York.

\bibitem{MR1}\textsc{Mihailovi\'c}, Z. and \textsc{Rajkovi\'c}, M. (2003) One dimensional asynchronous cooperative Parrondo's games. \textit{Fluct. Noise Lett.} \textbf{3} (4) L389--L398.

\bibitem{MR2}\textsc{Mihailovi\'c}, Z. and \textsc{Rajkovi\'c}, M. (2006) Cooperative Parrondo's games on a two-dimensional lattice. \textit{Phys. A} \textbf{365} (1) 244--251.

\bibitem{M}\textsc{Montero}, M. (2011) Parrondo-like behavior in continuous-time random walks with memory. \textit{Phys. Rev. E} \textbf{84} (5) 051139.

\bibitem{T}\textsc{Toral}, R. (2001) Cooperative Parrondo games. \textit{Fluct. Noise Lett.} \textbf{1} (1) L7--L12.

\bibitem{Xetal}\textsc{Xie}, N.-G., \textsc{Chen}, Y., \textsc{Ye}, Y., \textsc{Xu}, G., \textsc{Wang}, L.-G., and \textsc{Wang}, C. (2011) Theoretical analysis and numerical simulation of Parrondo's paradox game in space. \textit{Chaos Solitons Fractals} \textbf{44} (6) 401--414.

\end{thebibliography}
\end{document}